\documentclass[11pt]{article}
\usepackage{amsmath}
\usepackage{amsfonts}
\usepackage{amssymb}
\usepackage{amsthm}
\usepackage{graphicx}
\usepackage{psfrag}
\usepackage{cite}
\usepackage{url}
\usepackage[usenames]{color} 

%


\topmargin = -0.5 in

\textwidth = 6.5 in
\textheight = 9 in
\oddsidemargin = 0.0 in
\evensidemargin = 0.0 in

\newtheorem{theorem}{Theorem}[section]
\newtheorem{proposition}[theorem]{Proposition}
\newtheorem{definition}[theorem]{Definition}
\newtheorem{lemma}[theorem]{Lemma}
\newtheorem{corollary}[theorem]{Corollary}
\newtheorem{remark}[theorem]{Remark}

\theoremstyle{remark}

\newcommand{\fraku}{\mathfrak{U}}

\newcommand{\spn}{\mathop{\mathrm{span}}}

\newcommand{\nats}{\mathbb{N}}
\newcommand{\reals}{\mathbb{R}} 
\newcommand{\RR}{\mathbb{R}}
 
\newcommand{\CC}{\mathbb{C}} 
\newcommand{\LL}{\mathbb{L}} 
\newcommand{\M}{\mathbb{M}}
\newcommand{\calo}{\mathcal{O}}

\newcommand{\sph}{\mathbb{S}} 

\newcommand{\dif}{\mathrm{d}}

\renewcommand{\d}{\mathrm{dist}}



\newcommand{\J}{\mathcal{J}} 

\renewcommand{\L}{\mathcal{L}}

\newcommand{\rproj}{\mathbb{RP}}
\newcommand{\vol}{\mathrm{vol}}
\newcommand{\bfone}{\mathbf{1}}

\def\calg{{\mathcal G}}
\def\caln{{\mathcal N}}

\numberwithin{equation}{section}
\title{Kernel Based Quadrature  on Spheres and Other Homogeneous Spaces}
 \author{E.~Fuselier\thanks{ Department of Mathematics, High Point University,
 High Point, NC 27262, USA.}, 
 T.~Hangelbroek\thanks{ Department of Mathematics, University of Hawaii, 
 Honolulu, HI 96822, USA.  Research supported by  by grant DMS-1232409 from the National
    Science Foundation.}, 
F. J.~Narcowich\thanks{ Department of Mathematics, Texas A\&M
    University, College Station, TX 77843, USA. Research
    supported by grant DMS-1211566 from the National
    Science Foundation.}, 
J. D.~Ward\thanks{ Department of Mathematics, Texas A\&M University,
    College Station, TX 77843, USA. Research supported by
    grant DMS-1211566 from the National Science
    Foundation.},
 G. B~Wright\thanks{Department of Mathematics, Boise State University, Boise, ID 83725, USA.
    Research supported by grants DMS-0934581and DMS-1160379 from the National Science
    Foundation.} 
    }

\begin{document}
\maketitle

\begin{abstract} Quadrature formulas for spheres, the rotation group, and other
compact, homogeneous manifolds are important in a number of applications and have been the subject
of recent research. The main purpose of this paper is to study
coordinate independent quadrature (or cubature) formulas associated with certain classes of
positive definite and conditionally positive definite kernels that are
invariant under the group action of the homogeneous manifold.  In particular, we
show that these formulas are accurate -- optimally so in many cases --, and stable under an increasing number of nodes and in the presence of noise, provided the set $X$
of quadrature nodes is quasi-uniform.  The stability results are new in all cases.  In addition, we may use these quadrature formulas to obtain similar formulas for manifolds diffeomorphic to $\sph^n$, oblate spheroids for instance. The weights are obtained by solving a single linear system.  For $\sph^2$, and the restricted thin plate spline kernel $r^2\log r$, these weights can be computed for two-thirds of a million nodes, using a preconditioned iterative technique introduced by us.

\end{abstract}

\section{Introduction}
\label{intro}

Quadrature formulas for spheres, the rotation group, and other
compact, homogeneous manifolds are important in many applications and have been the subject
of recent research \cite{Mhaskar-etal-01-1, Narcowich-etal-06-1,
  graef-kunis-potts-2009, hesse-et-al-2010, graef-potts-2009,
  Pesenson-Geller-2011}. The main purpose of this paper is to study
quadrature (or cubature) formulas associated with certain classes of
positive definite and conditionally positive definite kernels that are
invariant under the group action of the homogeneous manifold, and to
show that these formulas are accurate and stable, provided the set $X$
of quadrature nodes is quasi-uniform.  The invariance of the
kernels is a key ingredient in giving simple, easy to construct linear
systems that determine the weights. The weights themselves have size
comparable to $1/N$, where $N$ is the number nodes in $X$.  For $\sph^2$, and the restricted thin plate spline
kernel $r^2\log r$, these weights can be computed for very large values of $N$ using the preconditioned iterative technique
described in~\cite{FHNWW2012}.

The quadrature formulas  developed here are for the general setting of a compact,
homogeneous, $n$-dimensional manifold $\M$ that is
equipped with a group invariant Riemannian metric $g_{ij}$ and its
associated invariant measure $d\mu(x)=\sqrt{\det(g_{ij}(x))}dx$.  From the point of view of current applications,
the two most important of these manifolds are $\sph^2$ and
$SO(3)$. However, other homogeneous spaces, such as Stiefel and
Grassmann manifolds, are arising in applications \cite{
  absil_etal_2008, edelman_1999}, and we expect that in the future our
results will be applied to them. The
type of quadrature formula we will be concerned with here has the form
\begin{equation}
\label{quadrature_formula_general}
\int_\M f(x)d\mu(x) \doteq \sum_{\xi \in X}c_\xi f(\xi) =:Q(f), \ f\in C(\M),
\end{equation}
where the set $X\subset \M$ of centers/nodes is finite. The weights
$\{c_\xi\}_{\xi\in X}$ are chosen so that quadrature operator $Q$
integrates exactly a given finite dimensional space of continuous
functions, $V$.

In the case of $\sph^n$ and $SO(3)$, popular choices for $V$ are
spaces of spherical harmonics \cite{Mhaskar-etal-01-1,
  Narcowich-etal-06-1, graef-kunis-potts-2009, hesse-et-al-2010}, and
Wigner D-functions \cite{graef-potts-2009}. Work also has been done on
compact two-point homogeneous manifolds \cite{Brown-Dai-05-1} and on
general compact homogeneous manifolds \cite{Pesenson-Geller-2011},
with $V$ being chosen to be a set of eigenfunctions of a manifold's
Laplace-Beltrami operator. We will use the term \emph{polynomial
  quadrature} for methods that integrate such spaces exactly; this is
because, for spheres and a few other spaces, $V$ consists of
restrictions of harmonic polynomials.

The quadrature methods developed here are \emph{kernel}
methods; they use a space $V$ consisting of linear combinations of
kernels. In \cite{sommariva_womer2005}, Sommariva and Womersley  used spaces of rotationally
invariant radial basis functions (RBFs) and spherical basis functions (SBFs) to derive a  linear system of equations for the weights $c_\xi$. However, neither accuracy, control over the size of the weights, nor stability was  addressed in \cite{sommariva_womer2005}. In this paper these and other issues are dealt with employing recent results developed by us in \cite{HNW2010, HNW_3_2011, FHNWW2012}.

Accuracy is measured in terms of the mesh norm $h$, which is defined
in section~\ref{kernel_interp_quad}. All the kernels we deal with are
associated with a Sobolev space $W_2^m$, for some $m$ in $\nats$.
Previous error estimates for quadrature with positive definite kernels
on $\sph^2$ were given in \cite{hesse-et-al-2010}. For the general
case of a homogeneous manifold, the error we obtained is
$\calo (h^m)$, for functions in $W_2^m$. However, on $\sph^n$ or
$SO(3)$, we get even better rates, in fact optimal: if a function is in $C^{2m}$,
then the order is $\calo(h^{2m})$. For example, the thin-plate
spline kernel restricted to $\sph^2$ has $m=2$, so, for a
function in $C^4(\sph^2)$, the error would be $\calo (h^4)$.

In the course of studying accuracy for $Q$ in various spaces, we obtain new error estimates for interpolation/reproduction problems on two-point homogeneous manifolds. First,  for a general such manifold,  the estimates  hold for $f\in W_2^m(\M)$, which is the ``native space'' of the kernel.  Second, for spheres (new when reproduction is required) and real projective spaces, the estimates apply to $f\in W_2^\mu$, with $n/2 <\mu \le m$. Thus the estimates allow an ``escape'' from the native space, in the sense that they are for functions \emph{not} smooth enough to be in that space.

These quadrature formulas are stable both under an increase in the
number of points and in the presence of noise. If the number of points
is increased, then the norm of the quadrature operator remains uniformly
bounded, as long as the level of quasi-uniformity is maintained. Thus
there is no oscillatory ``Runge phenomenon." To examine the effect of noise, we
assume the measured function values differ from the actual ones by
independent, identically distributed, zero mean random
variables. Under these conditions, the standard deviation of the
quadrature formula decreases as $\calo (N^{-1/2})$.

To illustrate how the method works, consider a positive definite SBF kernel $\phi(x\cdot y)$, $x,y\in \sph^n$, and let $V = \spn \{\phi(x\cdot \xi)\}_{\xi\in X}$. We want to integrate $s \in V$ exactly; that is, we require
$\sum_{\xi\in X}c_\xi s(\xi) = \int_{\sph^n} s(x)d\mu(x)$. Doing so
results in a systems of linear equations for the weights.  If $A$ is
the interpolation matrix with entries $A_{\xi,\eta}=\phi(\xi\cdot
\eta)$, $\xi,\eta\in X$, then the vector of weights $c$ satisfies
$(Ac)_\xi=\int_{\sph^n} \phi(\xi \cdot x)d\mu(x)$. At this point, we
encounter an apparent difficulty. We have to compute \emph{every} one of the
integrals $\int_{\sph^n} \phi(\xi \cdot x)d\mu(x)$ for \emph{every}
$\xi\in X$.

The rotational invariance of the SBF allows us to overcome this
difficulty. Because of rotational invariance, all of the integrals are
independent of $\xi$, and thus have the same value $J_0$; the system
then becomes $(Ac)_\xi=J_0$. The constant $J_0$ only needs to be
computed \emph{once} for a given kernel. (For many SBFs/RBFs, values
of $J_0$ are known; see \cite{sommariva_womer2005}.) The same is true
for group invariant kernels on $\M$, as we will see in
section~\ref{quadrature} below. The point to be emphasized here is
that group invariance of the kernels allows us to deal with quadrature
as an interpolation problem. Without it, the problem requires
computing many integrals and in fact becomes prohibitively expensive, computationally.

Numerical tests of the quadrature formulas were carried out for $\M=\sph^2$, in connection with the SBF  kernel $\Phi(x\cdot y)=(1-x\cdot y)\log(1-x\cdot y)$, $x,y\in \sph^2$. This kernel is the thin-plate spline $r^2\log r$ restricted to the sphere, and it corresponds to one of the Sobolev spaces mentioned above, namely,  $W_2^2(\sph^2)$. For these tests, the sets of quasi-uniform  nodes were generated via three different, commonly used methods: icosahedral,  Fibonacci (or \emph{phyllotaxis}), and quasi-minimum energy. The number of nodes employed varied over a substantial range, from a few thousand to two-thirds of a million. Weights corresponding to these nodes were computed using a pre-conditioning method developed in \cite{FHNWW2012}. The tests themselves focused on the accuracy and stability of the method, both in terms of increasing the number of nodes and adding noise. The tests, which are discussed in section~\ref{numerics}, gave excellent results, in agreement with the theory. 

There are situations where the manifold $\M$ involved is \emph{not} a homogeneous space, but  quadrature formulas can still be obtained. If $\M$ is diffeomorphic  to a homogeneous space, then it is possible to obtain quadrature weights for $\M$ from the ones for the corresponding homogeneous space. In section~\ref{manif_diffeo}, we will show how this can be done for $\sph^n$. We will then apply this to the specific case where $\M$ is an oblate spheroid (e.g., earth with flattening accounted for), which is of course diffeomorphic to $\sph^2$.

The paper is organized this way. Section~\ref{kernel_interp_quad} begins with a brief discussion of  positive definite/conditionally positive definite kernels, notation, and, in section~\ref{interpolation}, interpolation. Section~\ref{quadrature} contains a derivation and discussion of kernel quadrature formulas, with special emphasis on the role played by group invariance of the kernel employed in the formula. In section~\ref{accuracy_stability}, the questions of accuracy and stability mentioned in the introduction are taken up. The results obtained there are aimed at invariant kernels,  such as Sobolev and  polyharmonic kernels discussed in sections \ref{sobolev_kernels} and \ref{polyharmonic_kernels}. 

Sobolev kernels on a compact manifold are positive definite reproducing kernels for the Sobolev space $W_2^m$, $m>n/2$. In section~\ref{sobolev_kernels}, we study these in terms of their invariance, interpolation errors, and properties of their Lagrange functions and Lebesgue constants. Finally, in section~\ref{kappa_m_quadrature} we look at their use in  quadrature formulas. 

Section~\ref{polyharmonic_kernels} is devoted to a very important class of kernels on a compact, two-point homogeneous manifold: the polyharmonic kernels. These kernels, which may be either positive definite or conditionally positive definite, are Green's functions for operators that are polynomials in the the Laplace-Beltrami operator. On spheres, they include restricted surface splines, and on $SO(3)$ similar kernels. All of these are given in terms of simple, explicit formulas.  The whole section is a self-contained discussion of these kernels, culminating in their application to quadrature formulas.

The results from various numerical tests  that we conducted are discussed in detail in section~\ref{numerics}. Finally, in section~\ref{manif_diffeo} we discuss ways of using quadrature weights for  a compact, homogeneous manifold $\M$ to obtain invariant, coordinate independent weights for manifolds diffeomorphic to $\M$.

\section{Interpolation and Quadrature via Kernels}
\label{kernel_interp_quad}

The spaces that we will work with will be homogeneous manifolds,
ultimately. However, for interpolation we need very little in the way
of structure. In fact, we could take  our underlying space $\M$ to
be a metric space.

The set $X\subset \M$ will be assumed finite. Its \emph{mesh norm} (or
\emph{fill distance}) $h:=\sup_{x\in \M} \d(x,X)$ measures the density
of $X$ in $\M$, while the \emph{separation radius} $q:=\frac12
\inf_{\substack{\xi,\zeta\in X\\ \xi\ne \zeta}} \d(\xi,\zeta)$
determines the spacing of $X$. The \emph{mesh ratio} $\rho:=h/q$
measures the uniformity of the distribution of $X$ in $\M$.

We say that a continuous kernel $\kappa: \M\times\M\to \RR$ is
(strictly) \emph{positive definite} on $\M$ if, for every finite
subset $X\subset \M$, the matrix $A$ with entries
$A_{\xi,\eta}:=\kappa(\xi,\eta)$, $\xi,\eta\in X$, is positive
definite. \emph{Conditionally (strictly) positive definite} kernels
are defined with respect to a  finite dimensional space
$\Pi:=\spn\{\psi_k:\M\to \RR\}_{k=1}^m$, where the $\psi_k$'s are
linearly independent, continuous functions on $\M$. In addition,
given a finite set of centers $X\subset \M$, where we let $N:=\#
X$ be the cardinality of $X$, we say that $\Pi$ is \emph{unisolvent}
on $X$ if the only function $\psi\in \Pi$ for which $\psi|_X=0$ is
$y\equiv 0$. This means that $\{\psi_k|_X\}_{k=1}^m$ is a linearly
independent set in $\RR^N$. Given that $\Pi$ is unisolvent on $X$, we
say that the kernel $\kappa$ is \emph{conditionally} positive definite
if for every nonzero set $\{a_\xi\in \RR\}_{\xi\in X}$ such that
$\sum_\xi a_\xi \psi_k(\xi)=0$, $k=1,\ldots,m$, one has
\begin{equation}
\label{cpd_kernel}
\sum_{\xi,\eta\in X}a_\xi a_\eta \kappa(\xi,\eta)>0.
\end{equation}

\subsection{Interpolation}\label{interpolation}
Positive definite and conditionally positive definite kernels can be
used to interpolate a continuous function $f:\M\to \RR$, given the
data $f|_X$, by means of a function of the form
\begin{equation}
\label{interpolant}
s = \sum_{\xi\in X} a_\xi \kappa(\cdot,\xi) + \sum_{k=1}^m
b_k\psi_k,\ 
\text{where }\sum_{\xi\in X} a_\xi \psi_k(\xi)=0,\ k=1,\ldots,m.
\end{equation}
We will denote the space of such functions by $V_X$. 

In the case where the kernels are RBFs or SBFs, the space $\Pi$ is
usually taken to be either the polynomials or spherical harmonics with
degree less than some fixed number.

We now turn to the interpolation problem. Let $\Psi_k=\psi_k|_X$,
$k=1,\ldots,m$, and define the $N\times m$ matrix $\Psi=[\Psi_1\
\Psi_2 \cdots \Psi_m]$. In addition, let $a=(a_\xi)_{\xi\in X}$ and
$b=(b_1 \ \cdots b_m)^T$. The constraint condition that $\sum_\xi
a_\xi \psi_k(\xi)=0$ can now be stated as $\Psi^Ta=0$. Requiring that
$s$ interpolate a function $f\in C(\M)$ on $X$ is then
$f|_X=s|_X = Aa + \Psi b$, where $A_{\xi,\eta}=\kappa(\xi,\eta)$.
Written in matrix form, the interpolation equations are
\begin{equation}
\label{interp_matrix_form}
\underbrace{\left(\begin{array}{cc}
A&\Psi \\
\Psi^T&0_{m\times m}
\end{array}
\right) }_{\mathbf A}
\left(\begin{array}{c}
a\\
b
\end{array}\right)
= \left(\begin{array}{c}
f|_X\\
0_{m\times 1}
\end{array}\right).
\end{equation}

Using the constraint condition $\Psi^Ta=0$ and the positivity
condition (\ref{cpd_kernel}), one can easily show that the matrix
$\mathbf A$ on the left above is invertible. In addition, the
interpolation process reproduces $\Pi$; that is, if $f\in \Pi$, then
$s=f$. Finally, much is known about how well $s$ fits $f$. In many
cases, the approximation is excellent (cf. \cite{Wendland_book} and references therein).

In the sequel, we will need the \emph{Lagrange function} centered at
$\xi\in X$, $\chi_\xi\in V_X$. We define $\chi_\xi$ to be the
unique function in $V_X$ that satisfies $\chi_\xi(\eta) =
\delta_{\xi,\eta}$; that is, $\chi_\xi$ is $1$ when $x=\xi$ and $0$
when $x=\eta\in X$, $\eta\ne\xi$.

\subsection{Quadrature}\label{quadrature}
We will now develop our quadrature formula for  a
$C^\infty$, $n$-dimensional Riemannian manifold $\M$ that is a
\emph{homogeneous} space for a Lie group $\calg$
\cite{Warner-71-1}. This just means that $\calg$ acts
\emph{transitively} on $\M$: for two points $x,y\in\M$ there is a
$\gamma\in \calg$ such that $y=\gamma x$. Equivalently, $\M$ is a left
coset of $\calg$ for a closed subgroup.

$\sph^2$ is a homogeneous space for $SO(3)$. In fact, the Lie group
$\calg$ is a homogeneous space for itself. Thus, $SO(3)$ is its own
homogeneous space. Homogeneous spaces also include Stiefel manifolds,
Grassmann manifolds an many others. All spheres and projective spaces
belong to a special class known as \emph{two-point} homogeneous
spaces. Such spaces are characterized by the property that if two pairs of points $x,y$ and $x',y'$ satisfy $\d(x,y) = \d(x',y')$, then there is a group element $\gamma\in \calg$ such that $x'=\gamma x$ and $y'=\gamma y$. While spheres and projective spaces are compact, there are
also non compact spaces: $\RR^n$ and certain hyperbolic spaces also
belong to the class.

\emph{Invariance under $\calg$} plays an important here. We will
assume throughout that the kernel $\kappa$ is invariant \cite[\S
I.3.4]{vilenkin1968} under $\calg$. Moreover, we will take $d\mu$ to
be the $\calg$-invariant measure associated with the Riemannian metric
tensor for $\M$ \cite[\S I.2.3]{vilenkin1968}. Thus, for all $x,y\in
\M$, $\gamma \in \calg$, and $f\in L_1(\M)$ we have
\begin{equation}
\label{invariance_ker_def}
\kappa(\gamma x, \gamma y) = \kappa(x,y) \quad \text{and}\quad \int_\M
f(x)d\mu(x) = \int_\M f(\gamma x)d\mu(x).
\end{equation}
In particular, all SBF kernels on $\sph^n$, which have the form
$\phi(x\cdot y)$, are invariant, as is the standard measure on
$\sph^n$. The following lemma is a consequence of $\kappa$ and $d\mu$
being invariant.

\begin{lemma} \label{invariance_integral}
The integral $J(y):=\int_\M \kappa(x,y)d\mu(x)$ is
  independent of $y$.
\end{lemma}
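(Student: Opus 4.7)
The plan is to combine the three ingredients already available: the kernel invariance $\kappa(\gamma x,\gamma y)=\kappa(x,y)$, the invariance of the measure $d\mu$ under the $\calg$-action, and the fact that $\calg$ acts transitively on $\M$. The transitivity is the bridge that lets us compare $J(y)$ at two arbitrary base points; the invariance properties then show the two values are equal.

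Concretely, I would fix a reference point $y_0\in\M$ and, given any $y\in\M$, use transitivity to choose $\gamma\in\calg$ with $y=\gamma y_0$. Substituting into the definition gives
\[
J(y)=\int_\M \kappa(x,\gamma y_0)\,d\mu(x).
\]
Applying the measure-invariance identity from \eqref{invariance_ker_def} to the integrand $f(x):=\kappa(x,\gamma y_0)$ (which is continuous, hence certainly in $L_1(\M)$ since $\M$ is compact) replaces $x$ by $\gamma x$ under the integral, yielding
\[
J(y)=\int_\M \kappa(\gamma x,\gamma y_0)\,d\mu(x).
\]
Finally, kernel invariance collapses this to $\int_\M \kappa(x,y_0)\,d\mu(x)=J(y_0)$. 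Since $y$ was arbitrary, $J$ is constant.

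There is no real obstacle here; the only thing worth flagging is that one must apply the measure-invariance identity to the function $x\mapsto\kappa(x,\gamma y_0)$ (not to $\kappa(x,y_0)$), so that after the substitution both arguments of $\kappa$ are transformed by $\gamma$ and kernel invariance can be invoked. If one wanted to avoid fixing a reference point, an equivalent formulation is simply to observe that for every $\gamma\in\calg$, $J(\gamma y_0)=J(y_0)$ by the same two-line manipulation, and then transitivity finishes the argument.
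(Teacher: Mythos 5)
Your proof is correct and follows essentially the same route as the paper's: transitivity links the two base points, and then kernel invariance and measure invariance (applied in the opposite order from the paper, which is immaterial) collapse $J(y)$ to $J(y_0)$. No gaps.
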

\begin{proof}
  Take $z\in \M$ to be fixed. Because of the group action on $\M$, we
  may find $\gamma\in \calg$ such that $z=\gamma y$. From the invariance of
  $\kappa$ under $\gamma$, we have $\kappa(x,y)=\kappa(\gamma x,z)$. Hence,
  $J(y)=\int_\M \kappa(\gamma x,z)d\mu(x)$. However, the integral being
  invariant under $\calg$ then yields
\[
J(y)=\int_\M \kappa(\gamma x,z)d\mu(x)=\int_\M \kappa(x,z)d\mu(x)=J(z),
\]
which completes the proof. 
\end{proof}

Since $J(y)$ is independent of $y$, we may drop $y$ and denote it by
$J_0$, which we will do throughout the sequel.  Note that $J_0$ may be
$0$. In addition, we define these quantities.
\[
\left\{\begin{array}{rcl}
J_k&:=&\int_\M \psi_k(x)d\mu(x), \ k=1,\ldots, m; \\ [7pt]
J&:=& (J_1\cdots J_m)^T;\\
\mathbf 1 &:=&1|_X.
\end{array}\right.
\]
We point out that, for the constant function $1$ on $\M$, $\mathbf 1
=1|_X$ is the column vector in $\RR^N$ with all entries equal to 1.

The result below gives a formula for the integral of a function $s\in
V_X$ and provides a system of equations that determine the quadrature
weights. It follows the one given in \cite{sommariva_womer2005}, with
rotational invariance replaced by the invariance under $\calg$ proved
in Lemma~\ref{invariance_integral}.

\begin{proposition} \label{integral_V} Let $\kappa$ and $d\mu$ satisfy
  (\ref{invariance_ker_def}) and suppose that $c$ and $d$ are the
  $N\times 1$ and $m\times 1$ column vectors that uniquely solve the
  $(N+m)\times(N+m)$ system of equations
\begin{equation}
\label{weight_eqns}
Ac+\Psi d=J_0{\mathbf 1} \ \text{and}\ \Psi^Tc=J.
\end{equation}
If $s\in V_X$, then
\begin{equation}
\label{integra_formula_V}
\int_\M s(x)d\mu(x) = c^T s|_X=\sum_{\xi\in X}c_\xi s(\xi)
\end{equation}
In addition, if $\chi_\xi \in V_X$ is the Lagrange function centered
at $\xi$, then we also have
\begin{equation}
\label{weight_bound}
c_\xi=\int_\M \chi_\xi(x)d\mu(x) \ \text{and}\ |c_\xi| \le
\|\chi_\xi\|_{L_1(\M)}
\end{equation}
\end{proposition}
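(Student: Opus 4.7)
The plan is to expand an arbitrary $s\in V_X$ in the basis from \eqref{interpolant}, compute $\int_\M s\,d\mu$ directly using Lemma~\ref{invariance_integral} and the definition of $J_k$, and then recognize that the weight equations \eqref{weight_eqns} are precisely what is needed to match this against $c^Ts|_X$. Existence and uniqueness of $(c,d)$ solving \eqref{weight_eqns} is immediate: the coefficient matrix is $\mathbf{A}$ from \eqref{interp_matrix_form}, which was already observed to be invertible.

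For the main identity, I would write $s=\sum_{\xi\in X}a_\xi\kappa(\cdot,\xi)+\sum_{k=1}^m b_k\psi_k$ with $\Psi^Ta=0$, so that on restriction to $X$ one has $s|_X=Aa+\Psi b$. Integrating term by term and invoking Lemma~\ref{invariance_integral} for the kernel part and the definition of $J_k$ for the polynomial part gives
\[
\int_\M s(x)\,d\mu(x)=J_0\,\mathbf{1}^T a + J^T b.
\]
On the other side, using symmetry of $A$ and the system \eqref{weight_eqns},
\[
c^T s|_X = c^T A a + c^T\Psi b = (Ac)^T a + (\Psi^T c)^T b = (J_0\mathbf{1}-\Psi d)^T a + J^T b.
\]
The term $d^T\Psi^T a$ vanishes by the moment condition $\Psi^T a=0$, leaving $c^T s|_X = J_0\mathbf{1}^T a + J^T b$, which matches the integral. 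This proves \eqref{integra_formula_V}.

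The Lagrange-function statement \eqref{weight_bound} is then just a specialization. Since $\chi_\xi\in V_X$ with $\chi_\xi|_X = e_\xi$ (the standard basis vector), applying \eqref{integra_formula_V} to $s=\chi_\xi$ gives
\[
\int_\M \chi_\xi(x)\,d\mu(x) = c^T e_\xi = c_\xi,
\]
and the bound $|c_\xi|\le \|\chi_\xi\|_{L_1(\M)}$ follows by the triangle inequality for integrals. No step looks genuinely difficult here — the main thing to get right is the bookkeeping that turns the two block equations in \eqref{weight_eqns} into exactly the two contributions, $J_0\mathbf{1}^Ta$ and $J^Tb$, needed to reproduce $\int_\M s\,d\mu$; the rest is linear algebra and the use of Lemma~\ref{invariance_integral} to know that a single constant $J_0$ suffices.
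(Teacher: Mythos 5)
Your proof is correct and follows essentially the same route as the paper: both integrate the expansion of $s$ term by term to obtain $J_0\mathbf{1}^Ta+J^Tb$ and then identify this with $c^Ts|_X$ via the weight equations, the only cosmetic difference being that the paper routes the matching through the self-adjointness of $\mathbf{A}^{-1}$ applied to $(J_0\mathbf{1};J)^T$, while you substitute the block equations directly and kill the cross term with $\Psi^Ta=0$ and the symmetry of $A$. The specialization to the Lagrange function and the $L_1$ bound are identical to the paper's.
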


\begin{proof}
  Integrating $s(x)$ from (\ref{interpolant}) results in this chain
  of equations:
\begin{eqnarray}
  \int_\M s(x)d\mu(x) &=& \sum_{\xi\in X}a_\xi \underbrace{\int_\M 
    \kappa(x,\xi)d\mu(x)}_{J_0} + \sum_{k=1}^m b_k \underbrace{\int_\M 
    \psi_k(x)d\mu(x)}_{J_k} \label{integral_s}\\
  &=& J_0{\mathbf 1}^Ta+J^Tb \nonumber \\
  &=& (J_0{\mathbf 1}^T\  J^T)\left(\begin{array}{cc} a\\ b \nonumber 
    \end{array}\right).
\end{eqnarray}
Using (\ref{interp_matrix_form}), with $f|_X$ replaced by $s|_X$,
together with the invertibility and self adjointness of $\mathbf A$,
we obtain
\[
(J_0{\mathbf 1}^T\ J^T)\left(\begin{array}{cc} a\\
    b \end{array}\right) = \bigg\{\underbrace{{\mathbf
    A}^{-1}\left(\begin{array}{cc} J_0{\mathbf 1} \\
      J \end{array}\right)}_{\left(\begin{array}{cc} c\\
      d \end{array}\right)}\bigg\}^T \left(\begin{array}{c}
s|_X\\
0_{m\times 1}
\end{array}\right)=c^Ts|_X.
\]
Multiplying $\left(\begin{array}{cc} c\\ d \end{array}\right)$ by
$\mathbf A$ and writing out the equations for $c$, $d$ yields the
system (\ref{weight_eqns}). Combining the two previous equations then
yields (\ref{integra_formula_V}). Moreover, from
(\ref{integra_formula_V}), with $s$ replaced by $\chi_\xi$ and the
values of $s|_X$ replaced by those of $\chi_\xi$ on the set $X$, we
obtain the formula for $c_\xi$ in (\ref{weight_bound}). Finally, the
bound on the right in (\ref{weight_bound}) follows immediately from
the integral formula for $c_\xi$.
\end{proof}

As we mentioned earlier, the quadrature formula for $f\in C(\M)$ is obtained by replacing $f$ with its interpolant in $V_X$. To that end,
we define the linear functional $Q_{V_X}(f)$ that will play the role
of our quadrature operator.
 
\begin{definition} \label{quadrature_def} Let $f\in C(\M)$ and let
  $s_f\in V_X$ be the unique interpolant for $f$, so that
  $s_f|_X=f|_X$. Then, we define the linear functional
  $Q_{V_X}:C(\M)\to \RR$ via
\begin{equation}
\label{quadrature_functional}
Q_{V_X}(f):=\int_\M s_f(x)d\mu(x)=\sum_{\xi\in X} c_\xi f(\xi),
\end{equation}
where the $c_\xi$'s, the weights, are given in
Proposition~\ref{integral_V}.
\end{definition}

The invariance assumption on the kernel $\kappa$ produces a system
with the same attractive feature as one for an SBF $\phi(x\cdot
y)$. The integral $J_0$ depends only on $\kappa$ and $\M$; it is
entirely independent of $X$. This is also true of the other
$J_k$'s. As a result the equations (\ref{weight_eqns}) defining the
weight vector $c$ only depend on $X$ through function evaluations. The
integrals $J_0, J_1, \ldots,J_m$ are all known in advance and are
\emph{independent} of $X$. 

It is important to note that, \emph{without} the invariance of
$\kappa$, obtaining the system for $c$ would require computing
integrals of the form $\int_\M \kappa(x,\xi)d\mu(x)$ for each $\xi\in
X$. This follows just by looking at equation (\ref{integral_s}) in the
derivation of (\ref{weight_eqns}). This would make finding $\,c\,$ numerically very expensive, not only because each integral would have to be
computed for every $\xi\in X$, but also because the whole set would
have to be recomputed whenever $X$ was changed.

\subsubsection{Weights}\label{weights_properties}
In many cases, the weights appearing in the quadrature formula above
may be interpreted as coming from simple interpolation problems.  In
particular, if $\kappa$ is strictly positive definite and $\Pi =
\{0\}$, then the system (\ref{weight_eqns}) becomes $Ac = J_0{\mathbf
  1} $. This is the set of equations for interpolating the function
$f(x) \equiv 1/J_0$.  To obtain the interpolation problem for
quadrature with $\kappa$ being merely conditionally positive definite
with respect to $\Pi$, start with $\Psi^Tc=J$. This equation
completely determines the orthogonal projection of $c$, $c_\parallel
:= Pc$, onto the range of $\Psi$: The standard normal equations give
$c_\parallel = Pc=\Psi (\Psi^T\Psi)^{-1}\Psi^T c = \Psi
(\Psi^T\Psi)^{-1}J$. Thus, $c_\parallel$ is known. Next, let $c_\perp
:=P^\perp c$, which obviously satisfies $Pc_\perp=0$, so
$\Psi^Tc_\perp=0$, and, consequently, the following system:
\begin{equation}
\label{eq:weight_eqns_perp}
Ac_\perp +\Psi d=J_0{\mathbf 1}-A\Psi (\Psi^T\Psi)^{-1}J
\ \text{and}\ \Psi^Tc_\perp=0.
\end{equation}
This is the interpolation problem (\ref{interp_matrix_form}), with
$f_X= J_0{\mathbf 1}-A\Psi (\Psi^T\Psi)^{-1}J$. The final weights are
then 
\begin{equation}
\label{eq:weights}
c=c_\perp+\underbrace{\Psi (\Psi^T\Psi)^{-1}J}_{\displaystyle{c_\parallel}}.
\end{equation}

Note that (\ref{eq:weight_eqns_perp}) may be solved for $c_\perp$, without also
having to solve for $d$ as follows. Start by eliminating $d$ from
(\ref{eq:weight_eqns_perp}). Multiply both sides of
(\ref{eq:weight_eqns_perp}) by $P^\perp$. Since $P^\perp c_\perp =
c_\perp$ and $P^\perp \Psi=0$, we get
\begin{equation}
\label{remove_d}
P^\perp A P^\perp c_\perp =J_0P^\perp {\mathbf 1}-P^\perp A\Psi
(\Psi^T\Psi)^{-1} J.
\end{equation}
Because $\kappa$ is conditionally positive definite relative to $\Pi$,
$P^\perp A P^\perp$ is positive definite on the orthogonal complement
of the range of $\Psi$. Restricted to that space, it is
invertible. Carrying out the inverse gives us $c_\perp$. 

Often, the space $\Pi$ contains the constant function; that is,
$1 \in \Pi$. When this happens, the term with
$J_0$ drops out of (\ref{remove_d}), which then becomes 
\[
P^\perp A P^\perp c_\perp = -P^\perp Ac_\parallel = - P^\perp
A\Psi(\Psi^T\Psi)^{-1} J.
\]
This equation is homogeneous in $A$ and is therefore independent of
$J_0$. This implies that $c_\perp$ can be determined \emph{independently} of
$J_0$. 

There is another consequence of having $1 \in \Pi$. First, the column
vector $\bfone$ is in the range of $\Psi$, so $P\bfone=\bfone$. Let
$\langle c_\perp\rangle$ be the average of $c_\perp$. Since $Pc_\perp
=0$, we have $N\langle c_\perp\rangle= \bfone^Tc_\perp =
(P\bfone)^Tc_\perp= 0$. Second, since $1\in \Pi$, we also have that
the quadrature formula is exact for it, and so $Q(1) = \vol(\M)=
\bfone^Tc = N\langle c\rangle$. Using $c=c_\perp+c_\parallel$, along
with a little algebra, yields
\begin{equation}
\label{weight_averages}
\langle c_\perp\rangle = 0, \ \langle c\rangle = \langle c_\parallel
\rangle = \frac{\vol(\M)}{N}, \ \text{provided }1\in
\Pi.
\end{equation}

Signs of weights in quadrature formulas are usually nonnegative. This
is true for the polynomial quadrature formulas developed for spheres
\cite{Mhaskar-etal-01-1} and for two-point homogeneous manifolds
\cite{Brown-Dai-05-1}.  Numerical experiments by Sommariva and
Womersley \cite{sommariva_womer2005} produced negative weights for
kernels formed by restricting Gaussians to $\sph^2$. On the other
hand, their experiments involving the thin-plate splines
(cf. section~\ref{polyharmonic_kernels}) restricted to $\sph^2$
resulted in only positive weights.

Determining for what kernels and with what restrictions on $X$ all of
the weights are positive is an open problem.

\subsection{Accuracy and Stability of $Q_{V_X}$}
\label{accuracy_stability}
There are two important questions that arise concerning the quadrature
method that we have been discussing. First, how accurate is it? Second,
how stable is it?

The \emph{accuracy} of $Q_{V_X}$ depends on how well the underlying
space of functions, $V_X$ in our case, reproduces functions from the
class to be integrated. Specifically, we have the following standard
estimate:
\begin{equation}
\label{quad_op_accuracy}
\big|Q_{V_X}(f) - \int_\M f(x)d\mu(x)\big| \le \| s_f -
f\|_{L_1(\M)} \le  (\text{vol}(\M))^{1/2} \| s_f -
f\|_{L_2(\M)}.
\end{equation}
This inequality converts estimates of the accuracy of $Q_{V_X}(f)$ to
error bounds for kernel interpolation. These are known in many cases
of importance.

\emph{Stability} is related to how well the quadrature formula
performs under the presence of noise.  Lack of stability can amplify
the effect that noise in $f|_X$ will have on the value of
$Q_{V_X}(f)$. Stability also relates to performance as the number of
data sites increases. Standard one-dimensional equally spaced
quadrature formulas that reproduce polynomials can be quite unstable,
due to the well-known Runge phenomenon.

A measure of stability is the $C(\M)$ norm of the quadrature
operator. From the definition of $Q_{V_X}$ in
(\ref{quadrature_functional}) and equation (\ref{weight_bound}), we
have that
\begin{equation}
\label{quad_op_norm}
\| Q_{V_X}\|_{C(\M)} = \sum_{\xi\in X} |c_\xi| \le 
\sum_{\xi\in X} \|\chi_\xi\|_{L_1(\M)}.
\end{equation}
When $\M$ is a compact manifold, this bound can be given in terms of
the Lebesgue constant, $\Lambda_{V_X} = \max_{x\in \M} \sum_{\xi\in
  X}|\chi_\xi(x)|$. The reason is that
\begin{equation}
\label{quad_op_norm_lebesgue}
\| Q_{V_X}\|_{C(\M)} \le \sum_{\xi\in X} \|\chi_\xi\|_{L_1(\M)}\le 
\sum_{\xi\in X}\int_\M |\chi_\xi(x)|d\mu(x)\le \text{vol}(\M) \Lambda_{V_X}.
\end{equation}

Let us briefly see how noise affects $Q_{V_X}(f)$. Suppose that at
each of the sites $\xi$ we measure $f(\xi)+\nu_\xi$, where $\nu_\xi$
is a zero mean random variable. Further, we suppose that the
$\nu_\xi$'s are independent and identically distributed, with variance
$\sigma_\nu^2$. Our quadrature formula gives us $Q_{V_X}(f+\nu)$
rather than $Q_{V_X}(f)$. Because the $\nu_\xi$'s have zero mean, we
have that the mean $E\{Q_{V_X}(f+\nu)\} = Q_{V_X}(f)$. The variance of
$Q_{V_X}(f+\nu)$ is thus
\begin{equation}\label{expectation_variance_Q}
\sigma_{Q}^2 = E\{\big(Q_{V_X}(f+\nu)-Q_{V_X}(f)\big)^2\} =
E\{Q_{V_X}(\nu)^2\}.
\end{equation}
We can both calculate and estimate $\sigma_Q$:

\begin{proposition}\label{Q_standard_dev_prop}
  Let $\nu_\xi$ be independent, identically distributed, zero mean
  random variables having standard deviation $\sigma_\nu$. Then the
  standard deviation $\sigma_Q$ satisfies
\begin{equation}\label{Q_standard_dev}
  \sigma_Q^2=\sigma_\nu^2\|c\|_2^2\le \sigma_\nu^2 \|c\|_1\|c\|_\infty 
  \le \text{\rm vol}(\M) \sigma_\nu^2\,\Lambda_{V_X} \max_{\xi\in X} 
  \|\chi_\xi\|_{L_1(\M)}
\end{equation}
\end{proposition}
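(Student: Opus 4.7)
The plan is to evaluate $\sigma_Q^2$ directly from its defining expression $E\{Q_{V_X}(\nu)^2\}$ given in \eqref{expectation_variance_Q}, and then bound the resulting sum by applying H\"older's inequality together with the Lebesgue constant estimate from \eqref{quad_op_norm_lebesgue}.

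First, I would expand the square:
\[
Q_{V_X}(\nu)^2 \;=\; \Bigl(\sum_{\xi\in X}c_\xi \nu_\xi\Bigr)^2 \;=\; \sum_{\xi,\eta\in X} c_\xi c_\eta\, \nu_\xi \nu_\eta.
\]
Taking expectations and using the hypotheses that the $\nu_\xi$ are independent, zero-mean, with common variance $\sigma_\nu^2$, the cross terms vanish (since $E[\nu_\xi\nu_\eta]=E[\nu_\xi]E[\nu_\eta]=0$ for $\xi\ne \eta$), while the diagonal terms contribute $E[\nu_\xi^2]=\sigma_\nu^2$. This yields the first equality $\sigma_Q^2 = \sigma_\nu^2\sum_{\xi\in X} c_\xi^2 = \sigma_\nu^2\|c\|_2^2$.

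Next, the inequality $\|c\|_2^2 \le \|c\|_1\|c\|_\infty$ is the standard H\"older bound $\sum |c_\xi|^2 \le (\max_\xi |c_\xi|)\sum |c_\xi|$. This gives the middle inequality in \eqref{Q_standard_dev}. For the final inequality, I would bound the two factors separately using \eqref{weight_bound}: for $\|c\|_\infty$,
\[
\|c\|_\infty = \max_{\xi\in X}|c_\xi| \le \max_{\xi\in X}\|\chi_\xi\|_{L_1(\M)},
\]
and for $\|c\|_1$,
\[
\|c\|_1 = \sum_{\xi\in X}|c_\xi| \le \sum_{\xi\in X}\|\chi_\xi\|_{L_1(\M)} \le \mathrm{vol}(\M)\,\Lambda_{V_X},
\]
where the last step repeats the calculation in \eqref{quad_op_norm_lebesgue}, namely interchanging sum and integral and recognizing $\sum_\xi|\chi_\xi(x)|\le \Lambda_{V_X}$ pointwise. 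Multiplying these two bounds together produces the right-hand side of \eqref{Q_standard_dev}.

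There is no real obstacle here; the proof is essentially a direct computation chained to bounds already established. The only thing to be careful about is the correct use of independence (as opposed to mere uncorrelatedness, though uncorrelatedness is all that is actually needed for the variance calculation) and the direction of the H\"older inequality, which must be applied with the $1/\infty$ exponent pair rather than $2/2$ to obtain the stated form.
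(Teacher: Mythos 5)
Your proposal is correct and follows essentially the same route as the paper's proof: compute $E\{\nu_\xi\nu_\eta\}=\sigma_\nu^2\delta_{\xi,\eta}$ from independence and zero mean to get $\sigma_Q^2=\sigma_\nu^2\|c\|_2^2$, apply $\|c\|_2^2\le\|c\|_1\|c\|_\infty$, and then bound the two factors via (\ref{weight_bound}) and (\ref{quad_op_norm_lebesgue}). Your write-up actually spells out the final step in more detail than the paper, which simply says the bounds are obtained by ``combining'' with (\ref{weight_bound}).
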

\begin{proof}
  We begin by evaluating the term on the right in
  (\ref{expectation_variance_Q}). to do this, we need to compute
  $E\{\nu_\xi\nu_\eta\}$. Since the $\nu_\xi$'s are i.i.d., we have
  $E\{\nu_\xi\nu_\eta\}=\sigma_\nu^2\delta_{\xi,\eta}$. It follows
  that $\sigma_{Q}^2=E\{Q_{V_X}(\nu)^2\}=
  \sigma_\nu^2\|c\|_2^2$. Moreover, $\|c\|_2^2\le \|c\|_\infty
  \|c\|_1$. Combining this with (\ref{weight_bound}) and the previous
  equation results in (\ref{Q_standard_dev}). \end{proof}

\section{Sobolev Kernels}
\label{sobolev_kernels}

There are two classes of kernels that we will discuss here. The first
class was introduced in \cite[\S~3.3]{HNW2010}, in the context of a
compact $n$-dimensional Riemannian manifold\footnote{See
  \cite{HNW2010} for a discussion of the Riemannian geometry involved,
  including metrics, tensors, covariant derivatives, \emph{etc}.} $\M$
equipped with a metric $g$. This class comprises positive definite
reproducing kernels for the Sobolev spaces $W^m_2(\M)$, as defined in
\cite{aubin1982,hebey1996}. For $\M$ a homogeneous space with $g$
being the invariant metric that $\M$ inherits from a Lie group $\calg$,
we will show below that these kernels are invariant under the action
of $\calg$.

The second class, which was introduced and studied in
\cite{HNW_3_2011}, comprises polyharmonic kernels on two-point
manifolds -- spheres, projective spaces, which include $SO(3)$, along
with a few others. These kernels are conditionally positive definite
with respect to finite dimensional subspaces of eigenfunctions of the
Laplace-Beltrami operator and they are invariant under appropriate
transformations. We will discuss these in
section~\ref{polyharmonic_kernels} below.

The Sobolev space $W^m_2(\M)$, $m\in \nats$, is defined as follows.
Let $\langle \cdot,\cdot\rangle_{g,x}$ be the inner product for a
Riemannian metric $g$ defined on $T\M_x$, the tangent space at $x\in
\M$. This inner product can also be applied to spaces of tensors at
$x$. We denote by $\nabla^k$ the $k^{th}$ order covariant derivative
associated with the metric $g$, and let $d\mu$ be the measure
associated with $g$. For $W^m_2(\M)$, define the inner product
\begin{equation}
\label{def_sn} 
  \langle f, h\rangle_{m,\M}:=\langle f,h\rangle_{W_2^m(\M)}:= 
  \sum_{k=0}^m
  \int_{\M} 
  \big\langle
  \nabla^k f,  \nabla^k h
  \big\rangle_{g,x}
  \, \dif \mu(x),
\end{equation}
and norm $\|f\|_{m,\M}^2 := \langle f,f\rangle_{m,\M}$, where
$f,h:\M\to \RR$ are assumed smooth enough for their $W^m_2$ norms to
be finite. The advantage of this definition is that it yields Sobolev
spaces that are coordinate independent and can also be defined on
measurable regions $\Omega\subseteq \M$. Using the Sobolev embedding
theorem for manifolds \cite[\S 2.7]{aubin1982}, one can show that if
$m>n/2$ these spaces are reproducing kernel Hilbert spaces, with
$\kappa_m$ being the unique, strictly positive definite reproducing
kernel for $W^m_2(\M)$; that is,
\[
f(x) = \langle f(\cdot), \kappa_m(x,\cdot)\rangle_{m,\M}
\]
In the remainder of this section we will discuss invariance,
interpolation error estimates, Lagrange functions, Lebesgue constants,
and quadrature formulas derived from $\kappa_m$.

\subsection{Invariance of $\kappa_m$}
\label{invariance_kappa_m}
We now turn to a discussion of the invariance of $\kappa_m$ under the
action of a diffeomorphism that is also an isometry. We will then
apply this to the case of a homogeneous space. Here is what we will
need.

\begin{proposition}\label{homogeneous} Let $\M$ be a compact Riemannian
  manifold of dimension $n$ with metric $g$. If $\Phi:\M\to \M$ is a
  diffeomorphism that is also an isometry, then the kernel $\kappa_m$
  satisfies $\kappa_m(\Phi(x),\Phi(y))=\kappa_m(x,y)$ and
  $d\mu(\Phi(x))=d\mu(x)$.
\end{proposition}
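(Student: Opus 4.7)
The plan is to exploit the fact that $\kappa_m$ is the unique reproducing kernel for $W^m_2(\M)$, so it is enough to show that the inner product $\langle\cdot,\cdot\rangle_{m,\M}$ is preserved by pullback under $\Phi$; the invariance of $\kappa_m$ then follows by a uniqueness argument, and the invariance of $d\mu$ is a byproduct of $\Phi$ being an isometry.

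First I would verify the auxiliary statement $d\mu(\Phi(x))=d\mu(x)$. Because $\Phi$ is an isometry, $\Phi^\ast g = g$, so in any local chart the matrix $g_{ij}$ transforms to itself. Since $d\mu = \sqrt{\det g_{ij}}\,dx$, the pullback $\Phi^\ast d\mu$ equals $d\mu$. This is the change-of-variable identity needed to move $\Phi$ through integrals over $\M$.

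Next I would show that the full $W^m_2$ inner product is $\Phi$-invariant, i.e.\ $\langle f\circ\Phi,\,h\circ\Phi\rangle_{m,\M}=\langle f,h\rangle_{m,\M}$. The Levi-Civita connection $\nabla$ is uniquely determined by $g$, so an isometry commutes with $\nabla$ in the sense that $\nabla^k(f\circ\Phi) = \Phi^\ast(\nabla^k f)$ as covariant tensor fields. The pointwise metric pairing $\langle\cdot,\cdot\rangle_{g,x}$ on tensor spaces is built from $g$ alone, and $\Phi^\ast g = g$ gives $\langle \Phi^\ast T,\Phi^\ast S\rangle_{g,x} = \langle T,S\rangle_{g,\Phi(x)}$. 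Combining these two facts with the measure invariance established above, each term in the defining sum \eqref{def_sn} is preserved, so the whole inner product is preserved.

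Finally I would deduce the invariance of $\kappa_m$. Set $\tilde\kappa_m(x,y):=\kappa_m(\Phi(x),\Phi(y))$ and fix $f\in W^m_2(\M)$. Writing $F:=f\circ\Phi^{-1}\in W^m_2(\M)$ and using the reproducing property for $\kappa_m$, then applying the pullback invariance of the inner product (with $\Phi$ in place of $\Phi^{-1}$), we get
\[
 f(x)=F(\Phi(x))=\langle F(\cdot),\kappa_m(\Phi(x),\cdot)\rangle_{m,\M}
 =\langle F\circ\Phi,\kappa_m(\Phi(x),\Phi(\cdot))\rangle_{m,\M}
 =\langle f,\tilde\kappa_m(x,\cdot)\rangle_{m,\M}.
\]
Thus $\tilde\kappa_m$ is also a reproducing kernel for $W^m_2(\M)$, and by uniqueness $\tilde\kappa_m=\kappa_m$, which is exactly the desired identity. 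The main obstacle is the middle step: one must carefully justify that isometries intertwine covariant derivatives of all orders and preserve the induced pointwise metrics on tensor bundles, but this is standard Riemannian geometry (a citation to \cite{aubin1982} suffices), so the argument is essentially a bookkeeping exercise once these facts are invoked.
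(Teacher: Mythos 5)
Your proposal is correct and follows essentially the same route as the paper: establish invariance of the $W_2^m$ inner product under pullback by the isometry (measure invariance plus compatibility with the covariant derivatives), then conclude via the reproducing property and uniqueness of the reproducing kernel. The only difference is that the paper proves the intertwining of $\nabla^k$ with $\Phi$ by an explicit local-coordinate computation, whereas you cite it as a standard fact about isometries; both are legitimate.
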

\begin{proof} The proof proceeds in two steps. The first is showing
  this. Let $f:\M \to \RR$ and let $f^\Phi=f\circ \Phi$ be the
  pullback of $f$ by $\Phi$. Then,
\begin{equation}
\label{pullback_invariance}
\langle \nabla^k f^\Phi, \nabla^k h^\Phi \big\rangle_{g,x}= \langle
\nabla^k f, \nabla^k h \big\rangle_{g,\Phi(x)}.
\end{equation}
We will follow a technique used in \cite[Proposition~2.4,
p.~246]{Helgason_1984} and in \cite{HNW_3_2011}. Let $(\fraku,\phi)$
be a local chart, with coordinates $u^j=\phi^j(x)$, $j=1,\ldots,n$ for
$x\in \fraku$. Since $\Phi$ is a diffeomorphism, $(\Phi(\fraku),
\phi\circ \Phi^{-1})$ is also a local chart. Let $\psi=\phi\circ
\Phi^{-1}$, and use the coordinates $v^j=\psi^j(y)$ for $y\in
\Phi(\fraku)$. The choice of coordinates has the effect of assigning
the \emph{same} point in $\RR^n$ to $x$ and $y$, provided $y=\Phi(x)$
-- i.e., $u^j(x) = v^j(y)$.  Thus, relative to these coordinates the
map $\Phi$ is the identity, and consequently, the two tangent vectors
$(\frac{\partial}{\partial v^j})_y\in T_y\M $ and
$(\frac{\partial}{\partial u^j})_x\in T_x\M$ are related via
\[
\left(\frac{\partial}{\partial v^j}\right)_{\Phi(x)} =
d\Phi_x\left(\frac{\partial}{\partial u^j}\right)_x.
\]
So far, we have only used the fact that $\Phi$ is a
diffeomorphism. The map $\Phi$ being in addition an isometry then
implies that
\[
\left\langle \frac{\partial}{\partial v^j}, \frac{\partial}{\partial
    v^k} \right\rangle_{\Phi(x)} = \left\langle
  d\Phi_x\left(\frac{\partial}{\partial u^j}\right),
  d\Phi_x\left(\frac{\partial}{\partial u^k}\right)
\right\rangle_{\Phi(x)}= \left\langle \frac{\partial}{\partial u^j},
  \frac{\partial}{\partial u^k} \right\rangle_x.
\]
The expressions on the left and right are the metric tensors at
$y=\Phi(x)$ and $x$; the equation implies that, as functions of $v$
and $u$, $g_{jk}(v) = g_{jk}(u)$. From this it follows that the
expressions for the Christoffel symbols, covariant derivatives and
various expressions formed from them also will be the same, as
functions of local coordinates. In addition, note that the local forms
for $f^\Phi$ and $h^\Phi$ at $u=u(x)$ are $f^\Phi\circ \phi^{-1}$ and
$h^\Phi\circ \phi^{-1}$, respectively, and those for $f$ and $h$ at
$v=v(y)$ are $f\circ \psi^{-1}$ and $h\circ \psi^{-1}$. At $u=u(x)$
and $v=v(y)$, $y=\Phi(x)$, $f\circ \psi^{-1}(v)=f\circ \Phi\circ
\phi^{-1}(v)=f^\Phi\circ \phi^{-1}(v)$, and similarly for $h$. Again,
this is functional equality in local coordinates, so matching partial
derivatives are also equal. Consequently, (\ref{pullback_invariance})
holds.

The equality of the coordinate forms of the metric $g_{jk}$, which was
established above, implies invariance of the Riemannian measure:
\begin{equation}
\label{measure_invariance} 
d\mu(x) = \sqrt{\det(g_{jk}(u))}d^nu = \sqrt{\det(g_{jk}(v))}d^nv =
    d\mu(\Phi(x)).
\end{equation}
Using this we see that
\begin{eqnarray}
\int_\M \langle \nabla^k f^\Phi, \nabla^k h^\phi
\big\rangle_{g,x}d\mu(x)&=&\int_\M \langle \nabla^k f, \nabla^k
h \big\rangle_{g,\Phi(x)}d\mu(x) \nonumber \\
&=& \int_\M \langle \nabla^k f, \nabla^k
h \big\rangle_{g,\Phi(x)}d\mu(\Phi(x)) \nonumber \\
&=& \int_\M \langle \nabla^k f, \nabla^k
h \big\rangle_{g,y}d\mu(y) \nonumber
\end{eqnarray}
Finally, from this and (\ref{def_sn}), we have invariance of the
Sobolev inner product:
\begin{equation}
\label{invariance_sn}
\langle f^\Phi,h^\Phi\rangle_{m,\M}=\langle f,h\rangle_{m,\M}
\end{equation}

The second step is to show that the kernel is invariant. Since
$\kappa_m$ is a reproducing kernel, we have $ f(x)=\langle
f(\cdot),\kappa_m(x,\cdot)\rangle_{m,\M} $.  By (\ref{invariance_sn}),
we also have $f(x)=\langle f^\Phi(\cdot),\kappa_m(x,\Phi(\cdot))
\rangle_{m,\M} $. Replacing $x$ by $\Phi(x)$ then yields
\[
f(\Phi(x))=f^\Phi(x)=\langle
f^\Phi(\cdot),\kappa_m(\Phi(x),\Phi(\cdot)) \rangle_{m,\M}.
\]
Finally, replacing $f$ by $f^{\Phi^{-1}}$ above gives us
\[
f(x)=\langle f(\cdot),\kappa_m(\Phi(x),\Phi(\cdot)) \rangle_{m,\M},
\]
from which it follows that $\kappa_m(\Phi(x),\Phi(y))$ is also a
reproducing kernel for $W_2^m(\M)$. But, reproducing kernels are
unique, so $\kappa_m(\Phi(x),\Phi(y))=\kappa_m(x,y)$. Thus $\kappa_m$ is
invariant.
\end{proof}

Homogeneous spaces have two properties that allow us to use the
proposition just proved. First, they inherit a Riemannian metric
invariant under the action of the Lie group $\calg$. Second, the
action of a group element produces an isometric diffeomorphism
\cite{Helgason_1984,vilenkin1968}.  These observations then yield
this:

\begin{corollary}
\label{kernel_homog_space_invariance}
Let $\M$ be a homogeneous space for a Lie group $\calg$ and suppose
that $\M$ is equipped with the invariant metric $g$ from $\calg$. Then
the reproducing kernel $\kappa_m$ for $W_2^m(\M)$ is invariant under
the action of $\gamma\in \calg$.
\end{corollary}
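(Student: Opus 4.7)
The plan is to derive this corollary as a direct application of Proposition~\ref{homogeneous} once we verify that each group element acts as an isometric diffeomorphism on $\M$.

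First, I would fix $\gamma \in \calg$ and define the left-translation map $\Phi_\gamma : \M \to \M$ by $\Phi_\gamma(x) = \gamma x$. Two facts about this map must be established. Because $\calg$ is a Lie group acting smoothly and transitively on $\M$, and the inverse of $\Phi_\gamma$ is $\Phi_{\gamma^{-1}}$ (also a smooth left-translation), $\Phi_\gamma$ is a smooth bijection with smooth inverse, hence a diffeomorphism of $\M$. This is standard for homogeneous spaces and is stated explicitly in \cite{Helgason_1984,vilenkin1968}, so I would simply cite it rather than reproving it.

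Next, I would note that by hypothesis $\M$ is equipped with the invariant metric $g$ coming from $\calg$. Unwinding the definition of invariance, this says precisely that $\Phi_\gamma^* g = g$ for every $\gamma \in \calg$; equivalently, $\langle d\Phi_\gamma(v),d\Phi_\gamma(w)\rangle_{g,\gamma x} = \langle v,w\rangle_{g,x}$ for all tangent vectors $v,w \in T\M_x$. This is exactly the statement that $\Phi_\gamma$ is an isometry.

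With $\Phi_\gamma$ both a diffeomorphism and an isometry, Proposition~\ref{homogeneous} applies directly and yields $\kappa_m(\Phi_\gamma(x),\Phi_\gamma(y)) = \kappa_m(x,y)$, i.e.\ $\kappa_m(\gamma x, \gamma y) = \kappa_m(x,y)$, which is the claim. I do not expect any serious obstacle here; essentially the whole proof is the observation that ``invariant metric plus smooth group action'' is the same data as ``a family of isometric diffeomorphisms parametrized by $\calg$,'' at which point Proposition~\ref{homogeneous} does all the real work. The only mild subtlety worth flagging is that in (\ref{invariance_ker_def}) the notion of invariance for the measure $d\mu$ and kernel $\kappa$ used in Section~\ref{quadrature} is the same notion recovered here, so this corollary legitimately places the Sobolev kernels $\kappa_m$ inside the framework of Lemma~\ref{invariance_integral} and Proposition~\ref{integral_V}, justifying their later use in the quadrature construction.
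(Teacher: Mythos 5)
Your argument is correct and coincides with the paper's: the authors likewise observe that each group element acts as an isometric diffeomorphism (citing \cite{Helgason_1984,vilenkin1968}) with respect to the inherited invariant metric, and then invoke Proposition~\ref{homogeneous}. Nothing further is needed.
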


\subsection{Error estimates for interpolation via $\kappa_m$}
\label{error_estimates_kappa_m}

Recall that the accuracy of the quadrature formula associated with $\kappa_m$ is
directly dependent on error estimates for interpolation via
$\kappa_m$. To obtain these, we will first state a theorem that
provides estimates on functions with many zeros, quasi-uniformly
distributed over a compact manifold.

\begin{theorem}[{\cite[Corollary~A.13]{HNW_3_2011} ``Zeros Lemma"}]
  \label{zeros_lemma}
  Suppose that $\M$ is a $C^\infty$, compact, $n$-dimensional
  manifold, that $1\le p\le \infty$, $m\in \nats$, and also that $u\in
  W_p^m(\M)$. Assume that $m>n/p$ when $p>1$, and $m\ge n$ when $p=1$.
  Then there are constants $C_0=C_0(\M)$ and $C_1=C_1(m,k,\M)$ such
  that if $u|_X=0$ and $X\subset \M$ has mesh norm $h\le C_0/m^2$,
  then
\begin{equation}
\label{p_bound_W_k}
\|u\|_{W_p^k(\M)} \le C_1h^{m-k}\|u\|_{W_p^m(\M)} .
\end{equation} 
\end{theorem}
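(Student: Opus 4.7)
The plan is to reduce the statement to a Euclidean analog via a finite atlas of geodesic normal coordinates. First I would fix a finite cover of the compact manifold $\M$ by geodesic balls $B_j = B(x_j, r_0)$ with $r_0$ a small fraction of the injectivity radius $\inj$, together with a partition of unity subordinate to it. Pulling back by $\Exp_{x_j}$ yields diffeomorphisms $B_j \to \widetilde B_j \subset \RR^n$ under which, in normal coordinates, $g_{ij} = \delta_{ij} + O(r^2)$, the Christoffel symbols are $O(r)$, and the Riemannian measure is comparable to Lebesgue measure. Consequently the intrinsic norm $\|\cdot\|_{W_p^k(B_j)}$ is equivalent to the standard norm $\|\cdot\|_{W_p^k(\widetilde B_j)}$, with constants depending only on $\M$ and $k$ (not on $m$ nor on $u$).

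The core work is on each Euclidean ball, where the statement becomes the classical polynomial-based zeros lemma of Narcowich--Ward--Wendland: if $v\in W_p^m(\widetilde B)$ vanishes on a set $Y \subset \widetilde B$ with Euclidean mesh norm at most $C(n,p)/m^2$, then $\|v\|_{W_p^k(\widetilde B)} \le C' h^{m-k}\|v\|_{W_p^m(\widetilde B)}$. Its proof rests on four ingredients: (i) a Sobolev embedding with $m > n/p$ (or $m\ge n$ for $p=1$) so that $v$ is continuous and $v|_Y = 0$ is meaningful; (ii) in every sub-ball of radius comparable to $m^2 h$, $Y$ is dense enough to contain a unisolvent subset for polynomials of degree $<m$, so one can build an interpolating polynomial $p_m$ with $p_m|_Y = v|_Y = 0$; (iii) a Bramble--Hilbert / averaged-Taylor estimate $\|v-p_m\|_{W_p^k(\widetilde B)} \le C h^{m-k}\|v\|_{W_p^m(\widetilde B)}$; and (iv) the Markov inequality $\|\nabla p_m\|_\infty \le C m^2 R^{-1}\|p_m\|_\infty$, which converts the smallness of $p_m$ on $Y$ into control of its $W_p^k$-norm. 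The factor $m^2$ in the threshold $h\le C_0/m^2$ comes precisely from (iv).

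To reassemble, I would observe that for $X\subset\M$ of mesh norm $h$, the pulled-back node set $\Exp_{x_j}^{-1}(X \cap B_j)$ has mesh norm at most $C h$ in $\widetilde B_j$, with $C$ depending only on the bi-Lipschitz distortion of $\Exp_{x_j}$ (hence only on $\M$). Applying the Euclidean lemma on each chart and then summing over the finite cover gives
\[
\|u\|_{W_p^k(\M)}^p \;\le\; C_2 \sum_j \|u\|_{W_p^k(B_j)}^p \;\le\; C_3\, h^{p(m-k)}\sum_j \|u\|_{W_p^m(B_j)}^p \;\le\; C_4\, h^{p(m-k)}\|u\|_{W_p^m(\M)}^p,
\]
with the obvious modification for $p=\infty$; the constants absorb the fixed overlap of the cover and the partition-of-unity derivatives.

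The main obstacle is tracking the $m$-dependence uniformly through the chart maps: the threshold $h\le C_0/m^2$ and the Markov-based radius condition must remain compatible with the $m$-independent chart radii $r_0$. Because $r_0$ and the metric distortion constants depend only on $\M$, this is a bounded-geometry bookkeeping exercise rather than a substantive new difficulty; the care required is to keep $k$ and $m$ in the constants but not $u$ or $X$, and to check that the Sobolev embedding and the unisolvency argument continue to function at the endpoint $p=1$, $m\ge n$.
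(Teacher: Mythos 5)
This theorem is not proved in the paper at all: it is imported verbatim from \cite[Corollary~A.13]{HNW_3_2011}, so there is no internal proof to compare against. Your sketch correctly reproduces the strategy of the proof given in that reference -- localization by a finite atlas of geodesic normal charts with metric distortion controlled by the geometry of $\M$, equivalence of the intrinsic and Euclidean Sobolev norms on each chart, and the Euclidean zeros estimate assembled from Sobolev embedding, local polynomial unisolvency at scale comparable to $m^2h$, a Bramble--Hilbert bound, and Markov's inequality (the source of the $m^2$ in the threshold $h\le C_0/m^2$) -- with the only loose ends being the standard bookkeeping you already flag, e.g.\ using concentric balls so that the mesh norm of $X\cap B_j$ relative to $B_j$ is not degraded near $\partial B_j$.
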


Using this ``zeros lemma'' we are able obtain an estimate for
$\|s_f-f\|_{L_2(\M)}$, provided $f\in W_2^m(\M)$ and $s_f$ is the
interpolant for $f$.

\begin{proposition}\label{interp_error_W_2_manifold}
  Let $m>n/2$, $f\in W_2^m(\M)$, and let $s_f$ be the
  $\kappa_m$-interpolant for $f$ from $V_X$. Then, with the notation
  from Theorem~\ref{zeros_lemma}, if $h\le C_0/m^2$, we have
\begin{equation}\label{interp_bound_W_2_manifold}
\|s_f-f\|_{L_2(\M)} \le C_1h^m \|f\|_{W_2^m(\M)}. 
\end{equation}
\end{proposition}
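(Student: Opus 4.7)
The plan is to combine the reproducing kernel Hilbert space structure of $W_2^m(\M)$ with the Zeros Lemma (Theorem~\ref{zeros_lemma}). Since $\kappa_m$ is the reproducing kernel of $W_2^m(\M)$, the interpolant $s_f\in V_X$ is characterized as the $\langle\cdot,\cdot\rangle_{m,\M}$-orthogonal projection of $f$ onto $V_X$. Concretely, for any $v\in V_X$ of the form $v(\cdot)=\sum_{\xi\in X}\alpha_\xi \kappa_m(\cdot,\xi)$, the reproducing property gives
\[
\langle f - s_f, v\rangle_{m,\M} = \sum_{\xi\in X}\alpha_\xi\bigl(f(\xi)-s_f(\xi)\bigr)=0,
\]
since $s_f$ interpolates $f$ on $X$. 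Thus $f - s_f \perp V_X$ in the $W_2^m$ inner product, and in particular $\langle s_f, f - s_f\rangle_{m,\M}=0$, which by Pythagoras yields the Bessel-type bound $\|s_f - f\|_{W_2^m(\M)}\le \|f\|_{W_2^m(\M)}$.

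Next I would apply the Zeros Lemma to $u := s_f - f$. By construction $u|_X = 0$, and $u\in W_2^m(\M)$ (since both $f$ and $s_f$ lie in $W_2^m(\M)$; the latter because $\kappa_m(\cdot,\xi)\in W_2^m(\M)$ for each $\xi$, as it is the reproducing kernel evaluated at a fixed point). The hypothesis $m>n/2$ is exactly the condition $m>n/p$ with $p=2$ needed in Theorem~\ref{zeros_lemma}, and the assumption $h\le C_0/m^2$ is inherited directly. Taking $k=0$ and $p=2$ in \eqref{p_bound_W_k} gives
\[
\|s_f - f\|_{L_2(\M)} \le C_1 h^m \|s_f - f\|_{W_2^m(\M)}.
\]
Combining this with the orthogonality bound above yields \eqref{interp_bound_W_2_manifold}.

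There is essentially no main obstacle here beyond correctly invoking the two ingredients: the argument is the standard two-line ``native space plus sampling inequality'' proof, and all of the heavy lifting is already done in Theorem~\ref{zeros_lemma}. The only mild subtlety is verifying that $s_f\in W_2^m(\M)$ so that the Zeros Lemma applies; this is immediate since $V_X\subset W_2^m(\M)$ by definition of a reproducing kernel, but it is worth stating explicitly. No additional machinery (Fourier analysis, local polynomial reproduction, etc.) is needed, since the Zeros Lemma is formulated intrinsically on the manifold.
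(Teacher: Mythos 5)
Your proof is correct and follows essentially the same route as the paper: apply the Zeros Lemma to $s_f-f$ with $k=0$, $p=2$, and then bound $\|s_f-f\|_{W_2^m(\M)}$ by $\|f\|_{W_2^m(\M)}$ via the variational/orthogonality characterization of the native-space interpolant. Your explicit verification of the orthogonality $f-s_f\perp V_X$ is in fact a slightly more careful rendering of the minimization step the paper invokes.
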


\begin{proof}
  Clearly $s_f - f \in W_2^m$ and $(s_f - f)|_X=f|_X -
  f_X=0$. Applying Theorem~\ref{zeros_lemma} to $s_f - f$ with $k=0$
  yields $\| s_f-f \|_{L_2(\M)} \le C_1h^m \| s_f - f
  \|_{W_2^m(\M)}$. Since the space $W_2^m(\M)$ is the reproducing
  Hilbert space for $\kappa_m$, the interpolant $s_f$ minimizes $\|
  g-f \|_{W_2^m(\M)}$ among all $g \in W_2^m(\M)$. Thus, taking $g=0$
  yields $\| s_f-f \|_{W_2^m(\M)}\le \| 0 -f \|_{W_2^m(\M)} = \| f
  \|_{W_2^m(\M)}$, from which the inequality
  (\ref{interp_bound_W_2_manifold}) follows immediately.
\end{proof}

The bounds in (\ref{interp_bound_W_2_manifold}) hold whether or not
$\M$ is homogeneous. In one respect, the bounds are not as strong as
we would like. The assumption that $f$ is in the reproducing kernel
space $W_2^m$ precludes estimates for less smooth $f$, in $W_2^k(\M)$,
with $k<m$. Stronger bounds do hold in important cases, though. For
example, in section~\ref{polyharmonic_kernels}, we will see that the
stronger result for $f\in W_2^k$ holds for the class of
``polyharmonic" kernels, where $\M$ can be a sphere or some other
two-point homogeneous space. We conjecture that, at least for
$\kappa_m$, the stronger result holds for general $C^\infty$ compact,
Riemannian manifolds.

\subsection{Lagrange functions and Lebesgue constants} 
\label{lagrange_lebesgu_kappa_m}
The Lagrange functions $\{\chi_\xi\}_{\xi\in X}$ associated with
$\kappa_m$ have remarkable properties. For quasi-uniform sets of
centers, Lebesgue constants are bounded independently of the number of
points and the $L_1$ norms of the Lagrange functions are nicely
controlled. Specifically, we have the result below, which holds for a
general $C^\infty$ metric $g$.

\begin{proposition}[{\cite[Theorem~4.6]{HNW2010} \cite[Proposition
    3.6]{HNSW_2_2011}}]
\label{bdd_lebesgue_sob_ker}
  Let $\M$ be a compact Riemannian manifold of dimension $n$, and
  assume $m>n/2$.  For a quasi-uniform set $X\subset \M$, with mesh
  ratio $h/q \le \rho $, there exist constants $C_\M$ and $C_{\M,m}$
  such that if $h\le C_\M/m^2$, then the Lebesgue constant
  $\Lambda_{V_X} = \max_{x\in \M} \sum_{\xi\in X}|\chi_\xi(x)|$
  associated with $\kappa_m$ satisfies 
  \[
  \Lambda_{V_X}\le
  C_{\M,m}\rho^n.
 \] 
 In addition, we have this uniform bound on the $L_1$-norm of the
 Lagrange functions:
\[
\max_{\xi\in X} \|\chi_\xi\|_{L_1(\M)} \le C_{\rho,m,\M}q^n.
\]
\end{proposition}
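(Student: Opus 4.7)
The plan is to reduce both assertions to a pointwise decay estimate for each Lagrange function $\chi_\xi$ away from its center. Once one knows that $|\chi_\xi(x)|\le F(d(x,\xi)/h)$ for a rapidly decaying profile $F$ (polynomial of arbitrary order, or better, exponentially decaying), the Lebesgue constant bound follows by summing over $\xi\in X$: quasi-uniformity forces at most $C\rho^{n}k^{n-1}$ centers into the annulus $kq\le d(x,\xi)\le (k+1)q$, and summing $\sum_{k}k^{n-1}F(kq/h)$ against a sufficiently decaying $F$ yields a bound of order $\rho^n$. Similarly, $\|\chi_\xi\|_{L_1(\M)}$ is estimated by integrating $F(d(x,\xi)/h)$ in geodesic polar coordinates, producing a bound of order $q^n$ times a dimensional constant.

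First I would bound $\|\chi_\xi\|_{W_2^m(\M)}$. Because $W_2^m(\M)$ is the native space of $\kappa_m$, the Lagrange function $\chi_\xi\in V_X$ minimizes the $W_2^m$-norm among all functions interpolating the data $\{\delta_{\xi,\eta}\}_{\eta\in X}$. Hence it suffices to exhibit one cheap test interpolant: take a smooth bump $\phi_\xi$ supported in a normal-coordinate ball of radius $q/2$ about $\xi$ (which misses every other center by the definition of the separation radius), equal to $1$ at $\xi$. Standard Sobolev scaling gives $\|\phi_\xi\|_{W_2^m(\M)}\le Cq^{n/2-m}$, and minimization yields $\|\chi_\xi\|_{W_2^m(\M)}\le Cq^{n/2-m}$. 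This energy bound combines with the zeros lemma (Theorem~\ref{zeros_lemma}) on local patches: in any ball of radius comparable to $h$ disjoint from $\xi$, $\chi_\xi$ has at least one zero (by the mesh-norm condition), so the zeros lemma gives $\|\chi_\xi\|_{L_\infty(B)}\le Ch^{m-n/2}\|\chi_\xi\|_{W_2^m(B)}$.

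To upgrade this to genuine decay in $d(x,\xi)/h$ one runs a bootstrap over dyadic annuli about $\xi$: on each annulus one ``surgers'' $\chi_\xi$ by replacing it with a cutoff-extension of its restriction to the inner region, shows that the resulting function lies in the class of interpolants of the Lagrange data, and deduces that the local $W_2^m$-energy of $\chi_\xi$ on successive annuli is geometrically decreasing. Feeding this back through the zeros lemma converts the energy decay into exponential pointwise decay of $|\chi_\xi(x)|$ in $d(x,\xi)/h$. This surgery step is the technical core of \cite{HNW2010,HNSW_2_2011} and the main obstacle: the patching requires cutoffs whose $W_2^m$-norms are controlled under multiplication, and such control on a curved manifold is obtained through normal charts only after bookkeeping contributions from the Christoffel symbols, the injectivity radius, and sectional curvatures of $\M$. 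Once exponential decay is in hand, the summation and integration sketched above deliver both stated inequalities, with the $\rho^n$ factor coming from the annulus counting and the $q^n$ factor from the volume scaling of a geodesic ball. The smallness condition $h\le C_\M/m^2$ enters exactly where the zeros lemma is invoked, and the $m$-dependence of $C_{\M,m}$ and $C_{\rho,m,\M}$ tracks through the constants in the bump estimate and in the decay rate.
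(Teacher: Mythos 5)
Your sketch is correct and is essentially a reconstruction of the proofs in the sources on which the paper leans: the paper does not prove Proposition~\ref{bdd_lebesgue_sob_ker} from scratch, but simply cites \cite[Theorem~4.6]{HNW2010} for the Lebesgue constant (only making the $\rho$-dependence explicit) and \cite[Proposition~3.6]{HNSW_2_2011} for the $L_1$ bound. Your chain --- a bump-function comparison giving $\|\chi_\xi\|_{W_2^m(\M)}\le Cq^{n/2-m}$ via the minimal-norm property, local zeros estimates, a dyadic-annulus energy-decay bootstrap upgraded to exponential pointwise decay, and finally annulus counting and integration --- is exactly the strategy of those references, and you correctly flag the annulus ``surgery'' as the technical core rather than claiming to have dispatched it. The one place where your route genuinely differs from the paper's is the $L_1$ bound: the paper obtains $\|\chi_\xi\|_{L_1(\M)}\le C_{\rho,m}q^n$ from an $L_p$-stability (Riesz-basis type) estimate controlling $\|s\|_{L_p}$ by the scaled nodal values $q^{n/p}s(\eta)$, applied to the cardinal data, whereas you integrate the pointwise decay profile directly (as the paper itself later does for polyharmonic kernels in arriving at (\ref{simple_weight_bound})); both rest on the same decay machinery and both give $h^n=\rho^n q^n\le C_\rho q^n$. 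One bookkeeping slip: the annulus $kq\le d(x,\xi)\le (k+1)q$ contains $O(k^{n-1})$ centers by volume packing at separation $q$, not $O(\rho^nk^{n-1})$; the $\rho^n$ emerges only after summing $\sum_k k^{n-1}F(k/\rho)$, which is in fact the sum you then write down, so the final bound is unaffected.
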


\begin{proof}
  In the statement of the theorem used above, we have made explicit
  the $\rho$ dependence of the bound on $\Lambda_{V_X}$ found in the
  proof of \cite[Theorem~4.6]{HNW2010}. We have also adapted the
  notation there to that used here. The bound on
  $\|\chi_\xi\|_{L_1(\M)}$ follows from \cite[Proposition
  3.6]{HNSW_2_2011}, with $s=\chi_\xi$ and $p=1$. In that proposition,
  the coefficients $A_{p,\eta}=q^{n/p}\delta_{\xi,\eta}$ and so
  $\|A_{p,\cdot}\|_p=q^{n/p}$. For $p=1$, $\|A_{1,\cdot}\|_1=q^d$, and
  so $\|\chi_\xi\|_{L_1(\M)} \le C_{\rho,m} q^n$.
\end{proof}

\subsection{Kernel quadrature via $\kappa_m$}
\label{kappa_m_quadrature}

The positive definite Sobolev kernel $\kappa_m$ is invariant under the
action of $\calg$, by
Corollary~\ref{kernel_homog_space_invariance}. This is the bare
minimum requirement for a kernel to be able to give rise to a
computable quadrature formula. The other properties of $\kappa_m$
established in the previous sections give us the result below
concerning accuracy and stability.

\begin{theorem} \label{accuracy_stabiliy_kappa_m} Let $X\subset \M$ be
  a finite set having mesh norm $\rho_X\le \rho$. Take
  $A=(\kappa_m(\xi,\eta))|_{\xi,\eta\in X}$ and suppose $f\in
  W_2^m(\M)$. Then the vector of weights in $Q_{V_X}$ is $c=J_0
  A^{-1}{\mathbf 1}$, the error for $Q_{V_X}$ satisfies $|Q_{V_X}(f) -
  \int_{\M} fd\mu| \le h^m \|f\|_{W_2^m(\M)}$, and the norm of
  $Q_{V_X}$ is bounded by $\|Q_{V_X}\|_{C(\M)} \le
  C_{\M,m}\rho^n$. Finally, the standard deviation defined in
  Proposition~\ref{Q_standard_dev_prop} satisfies the bound
  $\sigma_Q\le C_{\rho,m,\M}\sigma_\nu h^{n/2}$.
\end{theorem}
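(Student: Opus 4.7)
The plan is to assemble the theorem's four conclusions from the machinery already built up in Sections~\ref{quadrature} and~\ref{sobolev_kernels}; none of the individual pieces should require new analysis.

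First, for the weight formula: since $\kappa_m$ is strictly positive definite (being the reproducing kernel of $W_2^m(\M)$), we may take $\Pi = \{0\}$, so $\Psi$ and $d$ drop out of the defining system (\ref{weight_eqns}), which collapses to $A c = J_0 \mathbf{1}$. Corollary~\ref{kernel_homog_space_invariance} is what makes $J_0$ independent of $y$ in the first place, legitimizing this system. Inverting $A$ gives $c = J_0 A^{-1}\mathbf{1}$. Next, for the error bound: the standard estimate (\ref{quad_op_accuracy}) reduces $|Q_{V_X}(f) - \int_\M f\,d\mu|$ to $(\vol \M)^{1/2}\|s_f - f\|_{L_2(\M)}$, and Proposition~\ref{interp_error_W_2_manifold} delivers exactly the needed $\calo(h^m)$ bound on the interpolation error, with the volume factor and $C_1$ absorbed into the implicit constant.

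For the operator-norm bound I would invoke (\ref{quad_op_norm_lebesgue}), which dominates $\|Q_{V_X}\|_{C(\M)}$ by $\vol(\M)\,\Lambda_{V_X}$, and then apply the Lebesgue-constant estimate $\Lambda_{V_X}\le C_{\M,m}\rho^n$ from Proposition~\ref{bdd_lebesgue_sob_ker}. For the standard-deviation bound, I would start from Proposition~\ref{Q_standard_dev_prop} in the form
\begin{equation*}
\sigma_Q^2 \le \vol(\M)\,\sigma_\nu^2\, \Lambda_{V_X}\, \max_{\xi\in X}\|\chi_\xi\|_{L_1(\M)},
\end{equation*}
and insert the two conclusions of Proposition~\ref{bdd_lebesgue_sob_ker}, namely $\Lambda_{V_X}\le C_{\M,m}\rho^n$ and $\max_\xi\|\chi_\xi\|_{L_1(\M)}\le C_{\rho,m,\M}\,q^n$. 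The one small algebraic manipulation is to use quasi-uniformity $q = h/\rho$ to rewrite $\rho^n\cdot q^n = h^n$; taking the square root then yields $\sigma_Q\le C_{\rho,m,\M}\,\sigma_\nu h^{n/2}$.

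There is no genuine obstacle here; the proof is essentially a bookkeeping exercise that stitches together (\ref{quad_op_accuracy}), (\ref{quad_op_norm_lebesgue}), Propositions~\ref{interp_error_W_2_manifold},~\ref{bdd_lebesgue_sob_ker}, and~\ref{Q_standard_dev_prop}. The one condition to track carefully is the mesh-norm hypothesis $h \le C_0/m^2$ inherited from the zeros lemma, which is needed both for the interpolation estimate and for the Lebesgue/Lagrange bounds; under the quasi-uniformity assumption $h/q\le\rho$, this is the same smallness condition throughout, and all constants depend only on $m$, $\rho$, and $\M$, as required.
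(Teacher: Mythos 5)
Your proposal is correct and follows essentially the same route as the paper's own proof: the weight formula from invariance (Corollary~\ref{kernel_homog_space_invariance}, Lemma~\ref{invariance_integral}) plus Proposition~\ref{integral_V} with $\Pi=\{0\}$, the error bound from (\ref{quad_op_accuracy}) and Proposition~\ref{interp_error_W_2_manifold}, the norm bound from (\ref{quad_op_norm_lebesgue}) and Proposition~\ref{bdd_lebesgue_sob_ker}, and the variance bound from Proposition~\ref{Q_standard_dev_prop} together with $\rho^n q^n = h^n$. Nothing is missing.
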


\begin{proof}
  The formula for the weights is a consequence of two things. First,
  by Corollary~\ref{kernel_homog_space_invariance} the kernel is
  invariant and so by Lemma~\ref{invariance_integral} the integral
  $\int_\M \kappa_m(x,\cdot)d\mu(x)$ is a constant, namely
  $J_0$. Second, since the kernel is positive definite,
  Proposition~\ref{integral_V} provides the desired formula for the
  weights. The error estimate is a consequence of
  (\ref{quad_op_accuracy}) and
  Proposition~\ref{interp_bound_W_2_manifold}, and the norm estimate
  follows from (\ref{quad_op_norm_lebesgue}) and
  Proposition~\ref{bdd_lebesgue_sob_ker}. Finally, the bound on
  $\sigma_Q$ is a consequence of
  Proposition~\ref{Q_standard_dev_prop},
  Proposition~\ref{bdd_lebesgue_sob_ker}, and of the fact that $\rho^n
  q^n=h^n$.
\end{proof}

There are two significant implications of this result. The first is
just what was noted in section~\ref{quadrature}; namely, the weights are
obtained by directly solving a linear system of equations.  The second is
that the measures of accuracy and stability hold for any $X$ with mesh
ratio less than a fixed $\rho$. 

There are several drawbacks. In the case of a \emph{general}
homogeneous space $\M$, the formulas for kernels $\kappa_m$ are not
yet explicitly known. This may be less of a problem when specific
cases come up in applications. Also, the error estimates, which provide
the chief measure of accuracy, hold for $f$ in the space
$W_2^m(\M)$. We would like to ``escape'' from this \emph{native space}
(reproducing kernel space) and have estimates for less smooth $f$.  As
we shall see in the next section, the situation is much improved in
the case of spheres, projective spaces, and other two-point
homogeneous manifolds. In the case of $\sph^2$, not only do we have
the requisite kernels, but in important cases we can also give fast algorithms for
obtaining the weights (cf. section~\ref{numerics}).

\section{Polyharmonic Kernels}
\label{polyharmonic_kernels}
For spheres, $SO(3)$, and other two-point homogeneous spaces
(cf. \cite[pgs.~167 \& 177]{Helgason_1984} for a list), one can use
polyharmonic kernels, which are related to Green's functions for certain differential operators. These include restrictions of
thin-plate splines \cite{HNW_3_2011}, which are useful because they
are given via explicit formulas. Many of these kernels are
conditionally positive definite, rather than positive definite.

The differential operators are polynomials in the Laplace-Beltrami
operator. Since, on a compact
Riemannian manifold $-\Delta$ is a self adjoint operator with a
countable sequence of nonnegative eigenvalues $\lambda_j <
\lambda_{j+1}$ having $+\infty$ as the only accumulation point, we can
express a polyharmonic kernel in terms of the associated eigenfunctions
$-\Delta \phi_{j,s} = \lambda_j \phi_{j,s}$, $s=1,\ldots,d_j$, $d_j$
being the multiplicity of $\lambda_j$. To make this clear, we will
need some notation.

Let $m\in \nats$ such that $m>n/2$ and let $\pi_m\in \Pi_m(\RR)$ be of
the form $\pi_m(x) = \sum_{\nu=0}^mc_\nu x^\nu$, where $c_m>0$ and let
$\L_m$ be the $2m$-order differential operator given by $\L_m =
\pi_m(-\Delta)$. We define $\J\subset \nats$ to be a finite set that
includes all $j$ for which the eigenvalue $\pi_m(\lambda_j)$ of $\L_m$
satisfies $\pi_m(\lambda_j)\le 0$. (In addition to this finite set,
$\J$ may also include a finite number $j$'s for which
$\pi_m(\lambda_j)>0$.) We say that the kernel $\kappa: \M\times\M\to
\reals$ is \emph{polyharmonic} if it has the eigenfunction expansion
\begin{equation}\label{polyharmonic_def}
\kappa(x,y):=\sum_{j=0}^\infty
\tilde \kappa_j\bigg(\sum_{s=1}^{d_j}\phi_{j,s}(x)\phi_{j,s}(y)\bigg),
\quad
\text{where }
\tilde \kappa_j:= \left\{
\begin{aligned}
\pi_m(\lambda_j)^{-1},\ & j\not\in\J, \\
\text{arbitrary, } & j\in\J.
\end{aligned}
\right. 
\end{equation}
The kernel $\kappa$ is conditionally positive definite with respect to
the space $\Pi_\J=\spn\{\phi_{j,s}:j\in \J,\ 1\le s\le d_j\}$. In the
sequel, we will need eigenspaces, Laplace-Beltrami operators and so on
for compact two-point homogenous manifolds.  Without further comment,
we will use results taken from the excellent summary given in
\cite{Brown-Dai-05-1}. This paper gives original references for these
results.

Polyharmonic kernels are special cases of \emph{zonal} functions,
which are invariant kernels that depend only on the geodesic distance
$d(x,y)$ between the variables $x$ and $y$; $d$ is  normalized so that the diameter of $\M$ is $\pi$ --- i.e., $0\le d(x,y)\le \pi$.  To see that polyharmonic kernels are zonal functions,  we
need the addition theorem on two-point homogeneous manifolds. If
$P_\ell^{(\alpha,\beta)}$ denotes the $\ell^{th}$ degree Jacobi polynomial,
normalized so that $P_\ell^{(\alpha,\beta)}(1) =
\frac{\Gamma(\ell+\alpha+1)}{\Gamma(\ell+1)\Gamma(\alpha+1)}$, then this
theorem states that for any orthonormal basis
$\{\phi_{j,s}\}_{s=1}^{d_j}$ of the eigenspace corresponding to
$\lambda_j$ we have
\[
\sum_{s=1}^{d_j}\phi_{j,s}(x)\phi_{j,s}(y) = c_{\varepsilon j}
P_{\varepsilon
  j}^{(\frac{n-2}{2},\beta)}(\cos(\varepsilon^{-1}d(x,y))), \quad
c_\ell:=\frac{(2\ell+\frac{n}{2}+\beta)\Gamma(\beta+1)
  \Gamma(\ell+\frac{n}{2}+\beta)}{\Gamma(\beta+\frac{n+2}{2})
  \Gamma(\ell+\beta+1)},
\]
where the parameters $\varepsilon$ and $\beta$, and of course
$\lambda_j$, depend on the manifold. These are listed in the table
below. In summary, every polyharmonic kernel has the form
$\kappa(x,y) = \Phi(\cos(\varepsilon^{-1}d(x,y)))$, where
\begin{equation}
\label{poly_basis_function}
\Phi(t) = \sum_{j=0}^\infty \tilde \kappa_j c_{\varepsilon
  j}P_{\varepsilon j}^{(\frac{n-2}{2},\beta)}(t), \ -1\le t\le 1.
\end{equation}

\begin{table}[ht]
\begin{center}
\begin{tabular}{|c|c|c|c|}
\hline
 {\bf Manifold	}
& $\varepsilon$	
&$\beta$
&$\lambda_j$
\\
 \hline
$\sph^n$	
& $1$
& $\frac{n-2}{2}	$
& $j(j+n-1)$\\ 

\hline
$\rproj^n$& 2& $\frac{n-2}{2}$ &$2j(2j+n-1)$
 \\ 

\hline
$\CC \mathbb{P}^n$& 1	& 0 & $j(j+\frac{n}{2})$\\
 \hline
Quaternion $\mathbb{P}^n$& 
$1$	& $1$ & $j(j+\frac{n+2}{2})$\\ 
\hline
Caley $\mathbb{P}^{16}$&1	& 3 &$j(j+12)$
\\
\hline
\end{tabular}
\caption{Parameters for compact, two-point homogeneous manifolds .}
\end{center}
\end{table}

An important class of polyharmonic kernels on $\sph^n$ are the
surface (thin-plate) splines restricted to the sphere. The surface
splines on $\RR^n$ are defined in \cite[Section 8.3]{Wendland_book};
their $\tilde \kappa_j$'s are computed in
\cite{Baxter-Hubbert-2001}. Both kernel and $\tilde \kappa_j$ are given
below in terms of the parameter $s=m+n/2$; they are also normalized to
conventions used here (see also \cite{mhaskar-etal-2010}). The first
formula is used for odd $n$ and the second for even $n$. These kernels
are conditionally positive definite of order $m$.
\begin{equation}\label{TPS}
\left.
\begin{array}{l}
\displaystyle{
\Phi_s(t)=
\left\{
\begin{array}{cl}
  (-1)^{s+\frac12}(1-t)^s, & s=m-\frac{n}{2} \in \nats+\frac12\\[5pt]
  (-1)^{s+1}(1-t)^s\log(1-t), & s=m-\frac{n}{2}\in\nats.
\end{array}
\right.}\\[18pt]
  \tilde \kappa_j=
  C_{s,n}\frac{\Gamma(j-s)}{\Gamma(j+s+n)}, \ \text{where, for even }n,
  \ j>s.
\end{array}
\right\}
\end{equation}
where the factor $C_{s,n}$ is given by
\[
C_{s,n}:=2^{s+n}\pi^{\frac{n}{2}}\Gamma(s+1)\Gamma(s+\frac{n}{2}) 
\left\{
\begin{array}{ll}
 \frac{\sin(\pi s)}{\pi} &  s=m+\frac{n}2 \in \nats+\frac12\\[5pt]
1, & s\in\nats.
\end{array}
\right.
\]
For the group $SO(3)\, (=\rproj^3)$, the following family of polyharmonic
kernels, which are similar to those above, was discussed in
\cite{Hangelbroek-Schmidt-2011}: 
\begin{equation}\label{def_so3}
\kappa(x,y) = \left(\sin\left(
\frac{\omega(y^{-1}x)}{2}\right)\right)^{2m-3},\ m\ge 2.
\end{equation}
Here $ \omega(z)$ is the rotational angle of $z\in SO(3)$. Adjusting
the normalization given in \cite[Lemma 2]{Hangelbroek-Schmidt-2011} to
that used here, the polynomial $\pi(x)$ associated with $\kappa$ is
$\pi(x) = \prod_{\nu=0}^{m-1}(x+1-4\nu^2)$. We remark that derivation of the kernels given in \cite{Hangelbroek-Schmidt-2011} used the theory of group representations along with a generalized addition formula that holds for any compact homogeneous manifold (see \cite{Gine-75}).

\begin{remark}
For two-point homogeneous manifolds, the Sobolev kernels $\kappa_m$
are in fact polyharmonic. \em The reason is that $\kappa_m$ is the
Green's function for the linear operator $\L_m = \sum_{k=0}^m
{\nabla^k}^\ast\nabla^k$. However, by \cite[Lemmas 4.2 \&
  4.3]{HNW_3_2011}, $\L_m = \pi_m(-\Delta)$, where $\pi_m(x) = x^m
+\sum_{\nu=0}^{m-1}c_\nu x^\nu$, so it has to have the form
(\ref{polyharmonic_def}). Also, it is easy to show that
$\pi_m(\lambda_j)>0$ for $j=0, 1, \ldots$, and that $\kappa_{m,j} =
\pi_m(\lambda_j)^{-1}\sim \lambda_j^{-m}$ for $j$ large. \em
\end{remark}

There are two reproducing kernel Hilbert spaces associated with a
polyharmonic kernel $\kappa$. We will take $\tilde\kappa_j>0$ for $j\in
\J$. These spaces are often called ``native spaces,'' and are denoted
by $\caln_\kappa$ and $\caln_{\kappa,\J}$. Consider the inner products
\[
\langle f,g\rangle_\kappa =
\sum_{j=0}^\infty\sum_{s=1}^{d_j}\frac{\hat f_{j,s}\overline{ \hat
    g}_{j,s}}{\tilde \kappa_j} \ \text{and}\ \langle
f,g\rangle_{\kappa,\J}= \sum_{j\not\in \J}\sum_{s=1}^{d_j}\frac{\hat
  f_{j,s}\overline{ \hat g}_{j,s}}{\tilde \kappa_j}.
\]
The first of these is the inner product of the reproducing kernel
Hilbert space for $\kappa$ itself, its native space $\caln_\kappa$,
and the second is the semi-inner product for the Hilbert space modulo
$\Pi_\J$, $\caln_{\kappa,\J}$.  It is important to note that norms for
$W_2^m$ and $\caln_\kappa$ are equivalent,
\begin{equation} \label{kernel_equiv}
\|f\|_\kappa \approx \|f\|_{W_2^m(\M)}.
\end{equation}
This follows easily from $\tilde \kappa_j \sim c_m^{-1}
\lambda_j^{-m}\sim c_m^{-1}\tilde \kappa_{m,j}$, for $j$ large.

\subsection{Interpolation with polyharmonic kernels}

If we choose $\tilde \kappa_j > 0$ for $j\in\J$, the kernel will be
positive definite, and so we can interpolate any $f\in C(\M)$. The
form of the interpolant will be
\begin{equation} \label{standard_interp}
I_Xf = \sum_{\xi\in X}a_\xi\kappa(x,\xi).
\end{equation}
with the coefficients being obtained by inverting $f|_X = Aa$, where $A
= [\kappa_(\xi,\eta)]_{\xi,\eta\in X}$ and the components of $a$ are
the $a_\xi$'s.

Let's look at error estimates, given that $f\in W_2^m(\M)$. Before we
start, we begin with the observation that (\ref{kernel_equiv}) and the
usual minimization properties of $\|\cdot \|_\kappa$ imply that
\[
\| I_Xf - f\|_{W_2^m(\M)} \le C_1\| I_Xf - f\|_\kappa \le C\|
f\|_\kappa \le C_2\|f\|_{W_2^m(\M)}.
\]
This inequality was the key to proving
Proposition~\ref{interp_error_W_2_manifold}, where the reproducing
kernel Hilbert space was itself $W_2^m(\M)$. Hence, repeating the
proof of that proposition, with the inequality changed appropriately, we
obtain the estimate
\begin{equation}
\label{general_polyharm_error}
\| I_Xf - f\|_{L_2(\M)} \le Ch^m\| f \|_{W_2^m(\M)}, \ f\in W_2^m(\M),
\end{equation}
which holds for a polyharmonic kernel (\ref{polyharmonic_def}),
provided the conditions on $X$ in
Proposition~\ref{interp_error_W_2_manifold} are satisfied. There are
no restrictions on the two-point homogeneous manifold $\M$.

There are, however, restrictions on the smoothness of $f$ -- namely,
$f$ must be at least as smooth as $\kappa$. Put another way, $f$ has
to be in the native space $\caln_\kappa$.

In the case of $\sph^n$ or $\rproj^n$, it is possible to remove this
restriction and ``escape'' from the native space of $\kappa$.  We will
first deal with $\sph^n$. From (\ref{poly_basis_function}), we see
that polyharmonic kernels on $\sph^n$ are \emph{spherical basis
  functions} (SBFs), provided coefficients with $j\in \J$ are taken to
be positive. Moreover, $\tilde \kappa_j \sim \lambda_j^{-m}$ for $j$
large. We may then apply \cite[Theorem~5.5]{Narcowich-etal-07-1}, with
$\phi, \tau,\beta, \mu \rightarrow \Phi, m, \mu, 0$, to obtain the
estimate below. (Note that for an integer $k$, the space $H_k(\sph^n)$
in \cite{Narcowich-etal-07-1} is $W_2^k(\sph^n)$ here.) This result
applies to functions \emph{not} smooth enough enough to be in $W_2^m$.

\begin{proposition}[\bf Case of $\sph^n$]\label{sphere_escape}
Let $\kappa$ be a polyhamonic kernel of the form
(\ref{poly_basis_function}), with $\deg(\pi(x))=m$, and with $\tilde
\kappa_j$, $j\in J$, chosen to be positive. Assume that the conditions
on $X\subset \sph^n$ in Proposition~\ref{interp_error_W_2_manifold}
are satisfied. If $\mu$ is an integer satisfying $m\ge \mu>n/2$ and if
$f\in W_2^\mu(\sph^n)$, then, provided $h$ is sufficiently small,
\begin{equation}\label{sphere_escape_est}
\| I_X f -f\|_{L_2(\sph^n)} \le Ch^\mu  \| f \|_{W_2^\mu(\sph^n)}.
\end{equation}
\end{proposition}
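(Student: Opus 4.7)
The plan is to invoke the ``escape-from-native-space'' theorem for spherical basis function interpolation due to Narcowich, Sun, Ward and Wendland (Thm.~5.5 of \cite{Narcowich-etal-07-1}), so the work reduces to (i) verifying its hypotheses for our polyharmonic $\kappa$ and (ii) matching parameters.

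First, I would exhibit $\kappa$ as a positive-definite SBF with the required Jacobi-coefficient decay. By (\ref{poly_basis_function}), $\kappa(x,y)=\Phi(x\cdot y)$ where the $j$th Jacobi coefficient of $\Phi$ is $\tilde\kappa_j c_j$. The choice $\tilde\kappa_j > 0$ for $j\in\J$, together with $\tilde\kappa_j = \pi_m(\lambda_j)^{-1}>0$ for $j\notin\J$, makes $\kappa$ strictly positive definite. Using $\lambda_j = j(j+n-1)\asymp j^2$ from the table and the fact that $\pi_m$ has degree $m$ with positive leading coefficient, we obtain $\tilde\kappa_j \asymp j^{-2m}$ for large $j$, which is precisely the decay (with smoothness parameter $\tau = m$) demanded by the reference theorem. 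The finitely many ``arbitrary'' values $\tilde\kappa_j$, $j\in\J$, only perturb $\kappa$ by a smooth zonal function and can be absorbed into the constant.

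Second, I would match parameters: $\phi \to \Phi$, $\tau \to m$, $\mu \to \mu$, and the target Sobolev index $\to 0$ (since we estimate in $L_2$). The quasi-uniformity and mesh-norm hypotheses carried over from Proposition~\ref{interp_error_W_2_manifold} are exactly those required in \cite{Narcowich-etal-07-1}, and Theorem~5.5 there then yields (\ref{sphere_escape_est}) directly.

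The main obstacle would arise only if one preferred a self-contained derivation. In that case, the standard route is a $K$-functional decomposition: write $f = f_L + (f-f_L)$, where $f_L$ is the projection of $f$ onto spherical harmonics of degree $\le L$. One controls the smooth part by $\|I_X f_L - f_L\|_{L_2(\sph^n)} \le Ch^m \|f_L\|_{W_2^m(\sph^n)} \le Ch^m L^{m-\mu}\|f\|_{W_2^\mu(\sph^n)}$, combining (\ref{general_polyharm_error}), the norm equivalence (\ref{kernel_equiv}), and a Bernstein inequality on $\sph^n$; one controls the rough remainder via Jackson's inequality $\|f - f_L\|_{L_2(\sph^n)} \le CL^{-\mu}\|f\|_{W_2^\mu(\sph^n)}$ together with a uniform $L_2$-bound on $I_X$ acting on band-limited data, which is where the delicate stability estimate is needed and is the technical heart of \cite{Narcowich-etal-07-1}. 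Balancing at $L \sim h^{-1}$ then produces the claimed $h^\mu$ rate.
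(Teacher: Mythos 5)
Your proposal is correct and follows essentially the same route as the paper: the authors likewise observe that the polyharmonic kernel is a positive definite SBF with $\tilde\kappa_j\sim\lambda_j^{-m}\sim j^{-2m}$ and then invoke \cite[Theorem~5.5]{Narcowich-etal-07-1} with the parameter substitution $\phi,\tau,\beta,\mu\to\Phi,m,\mu,0$. Your additional sketch of a self-contained $K$-functional argument goes beyond what the paper provides but is consistent with the machinery underlying the cited theorem.
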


The $n$-dimensional, real projective space, $\rproj^n$, is the sphere
$\sph^n$ with antipodal points identified. Thus each $x\in \rproj^n$
corresponds to $\{x,-x\}$ on $\sph^n$. We will use this correspondence
to lift the entire problem to $\sph^n$, which is an idea used in 
\cite{graef-2011, Hangelbroek-Schmidt-2011} in connection with approximation on $SO(3)$. As
we mentioned earlier, the distance $d(x,y) = d_{\rproj^n}(x,y)$ has
been normalized so that the diameter of $\rproj^n$ is $\pi$, rather
than the $\pi/2$ one would expect from its being regarded as a
hemisphere of $\sph^n$.  In fact, the two distances are proportional
to each other. The natural geodesic distance on projective space is
simply the angle between two lines passing through $\{x,-x\}$ and
$\{y,-y\}$, with $x,y\in \sph^n$, which is just $\theta =
\arccos(|x\cdot y|)$,  $0\le \theta\le \pi/2$. The distance 
 $d_{\rproj^n}$, in which the  diameter of $\M$ is $\pi$, satisfies $d_{\rproj^n}(x,y) =2\arccos(|x\cdot y|)$. If we use this in (\ref{poly_basis_function}),
then $t = |x\cdot y|$ and $\kappa(x,y)=\Phi(|x\cdot y|)$. One can take
this farther. In the case of $\rproj^n$, the series representation for
$\Phi$ is
\[
\Phi(t) = \sum_{j=0}^\infty \tilde \kappa_j c_{2
  j}P_{2j}^{(\frac{n-2}{2},\frac{n-2}{2})}(t).
\]
Since the Jacobi polynomials $P_n^{(\frac{n-2}{2},\frac{n-2}{2})}(t)$
are even or odd, depending on whether $n$ is even or odd, and since
the series for $\Phi$ contains only polynomials with $n$ even, it
follows that $\Phi(-t)=\Phi(t)$. As a consequence we have that
$\kappa(x,y) =\Phi(x\cdot y)$. Thus, if we regard the variables $x$
and $y$ to be points on $\sph^n$, the kernel $\kappa$ is even in both
variables, and is nonnegative definite on $\sph^n$.

Our aim now is to lift $\kappa$ to a polyharmonic kernel $\kappa^*$ on
$\sph^n$, with a view toward lifting the whole interpolation problem
to $\sph^n$. To begin, note that the eignvalues for $-\Delta_{\sph^n}$
have the form $\lambda^*_j = j(j+n-1)$ and so those the $\rproj^n$
may be written as $\lambda_j = \lambda^*_{2j}$. Next, let $\pi(x)\in
\Pi_m$ be the polynomial associated with the kernel $\kappa$ in
(\ref{polyharmonic_def}) and let $\tilde
\kappa^*_j=\pi(\lambda^*_j)^{-1}$, except where $\pi(\lambda^*_j) \le
0$. For these we only require that the corresponding $\tilde
\kappa^*_j>0$. Define the function
\[
\Phi^*(t) = \sum_{j=0}^\infty \tilde
\kappa^*_jc_jP_j^{(\frac{n-2}{2},\frac{n-2}{2})}(t),
\]
which is associated with the polyharmonic kernel $\kappa^*(x,y)
=\Phi^*(x\cdot y)$ on $\sph^n$. In constructing $\Phi^*$ we have
simply added an odd function to $\Phi$. Thus,
\[
\Phi(t) = \frac12 \big(\Phi^*(t) + \Phi^*(-t) \big).
\]

Start with the centers $X$ on $\rproj^n$. Each center in $X$
corresponds to $\{\xi,-\xi\}$ on $\sph^n$, so we may lift $X$ to
$X^*=\{\pm \xi \in \sph^n: \{\xi,-\xi\}\in X \}$. Following the proof
in \cite[Theorem~1]{graef-2011}, we may show that $q_{X'}=\frac12 q_X$
and $h_{X'}=\frac12 h_X$, where $d_{\rproj^n}$ is used for $q$ and $h$
in $X$. In addition, we may, and will, identify each $f\in
C(\rproj^n)$ with an even function in $C(\sph^n)$. Now, interpolate an
even $f$ on $\sph^n$ at the centers in $X^*$. This gives us
\begin{equation} \label{S_d_interpolant}
I_{X^*}f(x) = \sum_{\zeta\in X^*} a^*_\zeta \kappa^*(x,\zeta)
\end{equation}
Note that $\kappa^*(-x,\zeta) = \Phi^*((-x)\cdot \zeta) =
\Phi^*(x\cdot (-\zeta)) = \kappa^*(x,-\zeta)$. Since both $\zeta$ and
$-\zeta$ are in $X^*$. it follows that $I_{X^*}f(-x) = \sum_{\zeta\in
  X^*} a^*_{-\zeta} \kappa^*(x,\zeta)$. Moreover,
$I_{X^*}f(-\xi)=f(-\xi)=f(\xi) =I_{X^*}f(\xi)$.  Since interpolation
is unique, and the two linear combinations of kernels agree on $X^*$,
they are equal, and so we have $I_{X^*}f(-x)=I_{X^*}f(x)$ and
$a^*_{-\zeta}=a^*_\zeta$. Because $I_{X^*}f$ is even, we have, from
(\ref{S_d_interpolant}), that $I_{X^*}f(x) =\sum_{\zeta\in X^*}
a^*_\zeta \frac12 \big(\kappa^*(x,\zeta)+\kappa^*(x,-\zeta))$. Since
$\kappa^*(x,\zeta) =\Phi^*(x\cdot\zeta)$, we have $\kappa^*(x,\zeta) +
\kappa^*(x,-\zeta) =2\kappa(x,\zeta)$. Consequently,
\[
I_{X^*}f(x) =\sum_{\zeta\in X^*} a^*_\zeta
\kappa(x,\zeta)=\sum_{\zeta\in X}a_\zeta \kappa(x,\zeta),
\]
where $a_\zeta = a^*_\zeta+a^*_{-\zeta}=2a^*_\zeta$. The sum on the
right above is the interpolant to $f$ on $\rproj^n$, $I_Xf(x)$. Thus,
$I_{X^*}f(x) = I_Xf(x)$.

\begin{corollary}[{\bf Case of $\rproj^n$}]\label{projective_escape}
Let $\kappa$ be a polyhamonic kernel of the form
(\ref{poly_basis_function}), with $\deg(\pi(x))=m$, and with $\tilde
\kappa_j$, $j\in \J$, chosen to be positive. Assume that the
conditions on $X\subset \rproj^n$ in
Proposition~\ref{interp_error_W_2_manifold} are satisfied. If $\mu$ is
an integer satisfying $m\ge \mu>n/2$ and if $f\in W_2^\mu(\rproj^n)$,
then, provided $h$ is sufficiently small,
\begin{equation}\label{projective_escape_est}
\| I_X f -f\|_{L_2(\rproj^n)} \le Ch^\mu  \| f \|_{W_2^\mu(\rproj^n)}.
\end{equation}
\end{corollary}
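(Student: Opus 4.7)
The strategy is to reduce the assertion on $\rproj^n$ to the sphere result, Proposition~\ref{sphere_escape}, by means of the lifting construction that was built up in the paragraphs just preceding the statement. All of the essential machinery, namely the lifted kernel $\kappa^\ast$, the lifted node set $X^\ast = \{\pm\xi\in \sph^n : \{\xi,-\xi\}\in X\}$, and the identity $I_{X^\ast} f = I_X f$ for every even $f\in C(\sph^n)$, is already in place; it only remains to transport norms and apply the $\sph^n$-result.

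First I would identify $f\in W_2^\mu(\rproj^n)$ with its unique even extension $\tilde f \in W_2^\mu(\sph^n)$. Since $\rproj^n$ is a Riemannian quotient of $\sph^n$ by the antipodal map, this extension preserves the Sobolev norm up to a universal constant; concretely $\|\tilde f\|_{W_2^\mu(\sph^n)}^2 = 2\|f\|_{W_2^\mu(\rproj^n)}^2$, and similarly for the $L_2$ norm. Next, note that the lifted kernel $\kappa^\ast$ is polyharmonic on $\sph^n$ with the same polynomial $\pi$ and hence the same degree $m$, and that positivity of the $\tilde\kappa^\ast_j$ on the exceptional indices was built in by construction. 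Thus $\kappa^\ast$ meets every hypothesis of Proposition~\ref{sphere_escape}.

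The node-set $X^\ast$ is quasi-uniform on $\sph^n$ with mesh norm and separation radius $h_{X^\ast} = \tfrac12 h_X$ and $q_{X^\ast} = \tfrac12 q_X$, so its mesh ratio equals that of $X$, and for $h_X$ sufficiently small the hypothesis ``$h$ sufficiently small'' of Proposition~\ref{sphere_escape} on $\sph^n$ is satisfied. Applying that proposition to $\tilde f$ gives
\[
\|I_{X^\ast}\tilde f - \tilde f\|_{L_2(\sph^n)} \le C\, h_{X^\ast}^\mu \,\|\tilde f\|_{W_2^\mu(\sph^n)}.
\]
By the earlier identity $I_{X^\ast}\tilde f = I_X f$ (interpreted as an even function on $\sph^n$), the left-hand side equals $\sqrt{2}\,\|I_X f - f\|_{L_2(\rproj^n)}$, while $\|\tilde f\|_{W_2^\mu(\sph^n)} = \sqrt{2}\,\|f\|_{W_2^\mu(\rproj^n)}$ and $h_{X^\ast}^\mu = 2^{-\mu} h_X^\mu$. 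Absorbing these factors into $C$ yields the claimed bound \eqref{projective_escape_est}.

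The only non-routine step is verifying the norm equivalence under the antipodal extension, which follows because the antipodal map is a local isometry and so the pointwise Sobolev densities $\bigl\langle \nabla^k \tilde f,\nabla^k \tilde f\bigr\rangle_{g,x}$ on $\sph^n$ agree with the corresponding densities on $\rproj^n$ under the two-to-one projection; the factor of $2$ is merely the sheet count. Everything else is a transcription of Proposition~\ref{sphere_escape} through the lift already established in the text.
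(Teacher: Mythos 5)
Your proposal is correct and follows essentially the same route as the paper: lift $f$ to an even function on $\sph^n$, use the identity $I_{X^\ast}f = I_Xf$ and the relations $h_{X^\ast}=\tfrac12 h_X$, $q_{X^\ast}=\tfrac12 q_X$ established in the preceding text, apply Proposition~\ref{sphere_escape} on the sphere, and transfer the norms back to $\rproj^n$ by constant factors. The only cosmetic difference is in the precise constants relating the $L_2$ and Sobolev norms on the two spaces (the paper works with the rescaled metric $ds_{\rproj^n}=2\,ds_{\sph^n}$, you with the local-isometry normalization), but these are absorbed into $C$ either way.
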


\begin{proof}
The metric $ds_{\rproj^n}$ we are using for $\rproj^n$ is twice the
metric inherited from the sphere; that is,
$ds_{\rproj^n}=2ds_{\sph^n}$.  From the point of view of integration,
it is easy to see that $d\mu_{\rproj^n} = 2^{n/2}d\mu_{\sph^n}$. Thus
various integrals and norms are merely changed by $d$-dependent
constant multiples -- {e.g.}, $\|I_Xf -
f\|_{L_2(\rproj^n)}=2^{1-d/2}\|I_X^*f - f\|_{L_2(\sph^n)}$.  By
Proposition~\ref{sphere_escape},
\[
\|I_X^*f - f\|_{L_2(\sph^n)}\le C_1h_{X^*}^\mu \|I_{X^*}f -
f\|_{W_2^\mu(\sph^n)}\le C_2 h_X^\mu \| I_X f
-f\|_{W_2^\mu(\rproj^n)}.
\] 
The inequality (\ref{projective_escape_est}) then follows immediately
from this and the remarks above.
\end{proof}

\subsection{Error estimates for interpolation reproducing $\Pi_\J$}

The interpolation operator associated with $\kappa$ that reproduces
$\Pi_\J = \spn\{\phi_{j,s}:j\in \J,\ 1\le s\le d_j\}$ is
\[
I_{X,\J}f = \sum_{\xi\in X}a_{\xi,\J}\kappa(x,\xi)+p_\J , \ p_\J \in
\Pi_\J, \ \text{ and } \sum_{\xi\in X}a_{\xi,\J}\phi_{j,s}(\xi) =
0,\ j\in \J.
\]
We wish to estimate the $L_2$-norm of $I_{X,\J}f-f$. To do that, we
will need to obtain various equations relating $I_{X,\J}f$, $I_Xf$,
and $p_\J$.  (Since the choice of $\tilde \kappa_j$, $j\in \J$, obviously has no effect on $I_{X,\J}$, we can and will assume that  $\tilde \kappa_j>0$, $j\in
\J$.) Let $P_\J$ be the $L_2$ orthogonal projection onto
$\Pi_\J$. Note that the interpolant $I_{X,\J}f= \sum_{\xi\in
  X}a_{\xi,\J}\kappa(x,\xi)+p_\J$ consists of two terms. It is easy to
show that the first term belongs to $(\Pi_\J)^\perp$ and, of course,
$p_\J\in \Pi_\J$. Consequently,
\begin{equation}
\label{projection_error}
P_\J(I_{X,\J}f-f)= p_\J-P_\J f.
\end{equation}
Next, because both interpolate $f$, the difference $\sum_{\xi\in
  X}(a_\xi - a_{\xi,\J})\kappa(x,\xi)$ interpolates $p_\J$. This
implies that
\[
I_Xf - I_{X,\J}f = I_Xp_\J-p_\J,
\]
From this, we have
\begin{equation}
\label{first_bnd}
\| I_{X,\J}f -f\|_{L_2(\M)} \le \|I_Xf - f\|_{L_2(\M)} + \|p_\J- I_X
p_\J\|_{L_2(\M)} 
\end{equation}
Since $p_\J- I_X p_\J|_X = 0\,$, the second term on the right can be
estimated via Theorem~\ref{zeros_lemma}, provided $h$ is sufficiently
small. Making the estimate yields this bound:
\[
 \|p_\J- I_X p_\J\|_{L_2(\M)} \le Ch^m \|p_\J- I_X p_\J\|_{W_2^m(\M)}.
\]
In addition, by (\ref{kernel_equiv}), the norm induced by $\kappa$ is
equivalent to the $W_2^m(\M)$ norm, and thus we have that $\|p_\J- I_X
p_\J\|_{W_2^m(\M)}\le C\|p_\J- I_X p_\J\|_\kappa\le C\|p_\J\|_\kappa$,
where the rightmost inequality follows from the minimization
properties of $I_Xp_\J$ in the norm $\|\cdot\|_\kappa$. From the
definition of the $\kappa$ norm, where we have assumed $\tilde
\kappa_j=1$, $j\in \J$, it is easy to see that $\|p_\J\|_\kappa =
\|p_\J\|_{L_2(\M)}$. Combining the various inequalities then yields
\[
\|p_\J- I_X p_\J\|_{L_2(\M)} \le Ch^m \|p_\J\|_{L_2(\M)}.
\]
Using this on the right in (\ref{first_bnd}) gives us
\begin{equation}
\label{second_bnd}
\| I_{X,\J}f -f\|_{L_2(\M)} \le 
\|I_Xf - f\|_{L_2(\M)} + Ch^m \|p_\J\|_{L_2(\M)}.
\end{equation}
Furthermore, employing (\ref{projection_error}) in conjunction with
this inequality, we see that
\[
\| I_{X,\J}f -f\|_{L_2(\M)} \le 
\|I_Xf - f\|_{L_2(\M)} + Ch^m  \big(\|I_{X,\J}f-f\|_{L_2(\M)}+
\|P_\J f\|_{L_2(\M)}\big)
\]
Choosing $h$ so small that $Ch^m<\frac12$ and then manipulating the
expression above, we obtain this result.

\begin{lemma} \label{final_bnd_poly_reprod}
Let $f\in L_2(\M)$.  For all $h$ sufficiently small, we have 
\begin{equation}\label{third_bnd}
\| I_{X,\J}f -f\|_{L_2(\M)} <
2\|I_Xf - f\|_{L_2(\M)} + Ch^m  
\|P_\J f\|_{L_2(\M)}.
\end{equation}
\end{lemma}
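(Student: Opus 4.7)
The plan is essentially to pick up exactly where the discussion preceding the lemma leaves off, since the main analytical work---relating $I_{X,\J}f$ to $I_Xf$, invoking the Zeros Lemma, and using the norm-equivalence (\ref{kernel_equiv})---has already been carried out and yielded (\ref{second_bnd}). What remains is a short bootstrapping argument that converts the implicit bound on $\|p_\J\|_{L_2(\M)}$ into the explicit bound (\ref{third_bnd}) involving only $\|P_\J f\|_{L_2(\M)}$.

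First, I would invoke the identity (\ref{projection_error}), which reads $P_\J(I_{X,\J}f - f) = p_\J - P_\J f$. Since $P_\J$ is an $L_2$-orthogonal projection, applying the triangle inequality and the contractivity of $P_\J$ gives
\begin{equation*}
\|p_\J\|_{L_2(\M)} \le \|P_\J(I_{X,\J}f - f)\|_{L_2(\M)} + \|P_\J f\|_{L_2(\M)} \le \|I_{X,\J}f - f\|_{L_2(\M)} + \|P_\J f\|_{L_2(\M)}.
\end{equation*}
Substituting this into (\ref{second_bnd}) produces
\begin{equation*}
\| I_{X,\J}f - f\|_{L_2(\M)} \le \|I_X f - f\|_{L_2(\M)} + Ch^m \bigl(\|I_{X,\J}f - f\|_{L_2(\M)} + \|P_\J f\|_{L_2(\M)}\bigr),
\end{equation*}
which is the inequality already displayed in the text immediately before the lemma.

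Next, I would fix $h$ small enough that $Ch^m < \tfrac12$; this is the only place the ``$h$ sufficiently small'' hypothesis is used (beyond what is already needed for the Zeros Lemma behind (\ref{second_bnd})). Moving the $Ch^m\|I_{X,\J}f - f\|_{L_2(\M)}$ term to the left gives
\begin{equation*}
(1 - Ch^m)\,\| I_{X,\J}f - f\|_{L_2(\M)} \le \|I_X f - f\|_{L_2(\M)} + Ch^m \|P_\J f\|_{L_2(\M)},
\end{equation*}
and dividing by $1-Ch^m > \tfrac12$ yields the claimed (\ref{third_bnd}), possibly after absorbing the constant $2C$ into a new $C$.

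There is no real obstacle: the entire burden of the proof is carried by (\ref{second_bnd}), which already encodes the interpolation-error estimate (\ref{general_polyharm_error}), the Zeros Lemma applied to $p_\J - I_Xp_\J$, and the equivalence $\|\cdot\|_\kappa \approx \|\cdot\|_{W_2^m(\M)}$ together with the native-space minimization property. The only subtlety worth double-checking is the identity $\|p_\J\|_\kappa = \|p_\J\|_{L_2(\M)}$ used in (\ref{second_bnd}), which relies on the normalization $\tilde\kappa_j = 1$ for $j\in\J$; since the interpolant $I_{X,\J}f$ and the projection $P_\J$ do not depend on this choice, making the normalization is harmless.
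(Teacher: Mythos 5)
Your proposal is correct and follows essentially the same route as the paper: the paper's own proof is exactly the bootstrapping step you describe, namely substituting the projection identity (\ref{projection_error}) into (\ref{second_bnd}) and then absorbing the $Ch^m\|I_{X,\J}f-f\|_{L_2(\M)}$ term by taking $Ch^m<\tfrac12$. Your explicit use of the contractivity of $P_\J$ to get $\|p_\J\|_{L_2(\M)}\le\|I_{X,\J}f-f\|_{L_2(\M)}+\|P_\J f\|_{L_2(\M)}$ is precisely the step the paper leaves implicit.
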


\noindent We now arrive at the error estimates we seek.

\begin{theorem}\label{interp_error_reproducion}
Let $\M$ be a two-point homogeneous manifold and let $\kappa$ be a
polyhamonic kernel of the form (\ref{poly_basis_function}), with
$\deg(\pi(x))=m$. Assume that the conditions on $X\subset \M$ in
Proposition~\ref{interp_error_W_2_manifold} are satisfied. If $\mu$ is
an integer satisfying $m\ge \mu>n/2$, then,
provided $h$ is sufficiently small,
\begin{equation}\label{interp_error_reproduce}
\| I_{X,\J} f -f\|_{L_2(\M)} \le 
\left\{
\begin{array}{cl}
Ch^m \| f \|_{W_2^m(\M)}, & \text{\rm all } \M, 
\ \text{and } f\in
W_2^m(\M), \\ [7pt] 
Ch^\mu \| f \|_{W_2^\mu(\M)}, & \M = \sph^n,
\rproj^n, \text{and }f\in W_2^\mu(\M).
\end{array}
\right.
\end{equation}
\end{theorem}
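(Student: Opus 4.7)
The proof is essentially a packaging result: almost everything needed is already in hand from Lemma~\ref{final_bnd_poly_reprod} together with the interpolation estimates for the positive-definite interpolant $I_X$. My plan is to start from the bound
\[
\| I_{X,\J} f - f\|_{L_2(\M)} < 2\|I_Xf - f\|_{L_2(\M)} + Ch^m \|P_\J f\|_{L_2(\M)}
\]
furnished by Lemma~\ref{final_bnd_poly_reprod}, and then estimate each of the two terms separately, choosing the source of the estimate according to which case (general $\M$ or $\M=\sph^n,\rproj^n$) we are in.

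For the first term, $\|I_Xf - f\|_{L_2(\M)}$, I would invoke \eqref{general_polyharm_error} in the general two-point homogeneous case, giving $Ch^m\|f\|_{W_2^m(\M)}$ whenever $f\in W_2^m(\M)$. In the sphere/projective-space case, with $f \in W_2^\mu(\M)$ and $n/2 < \mu \le m$, I would instead apply Proposition~\ref{sphere_escape} or Corollary~\ref{projective_escape} to obtain $Ch^\mu\|f\|_{W_2^\mu(\M)}$. This is precisely where the ``escape from the native space'' enters; for general two-point homogeneous manifolds the corresponding estimate is not available and we must require $f\in W_2^m$.

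For the second term, the idea is that $P_\J f$ lies in the \emph{fixed, finite-dimensional} space $\Pi_\J$ of Laplace--Beltrami eigenfunctions, so its $L_2$-size is bounded by $\|f\|_{L_2(\M)}$, which in turn is controlled by $\|f\|_{W_2^\mu(\M)}$ (or $\|f\|_{W_2^m(\M)}$) since $\M$ is compact and $\mu>n/2$. Thus
\[
Ch^m\|P_\J f\|_{L_2(\M)} \le Ch^m \|f\|_{W_2^\mu(\M)}.
\]
In the native-space case ($\mu=m$) this already has the right order. In the escape case, since $m\ge \mu$ and $h\le h_0$ is bounded above (in fact we only ever use $h$ small), one has $h^m \le h_0^{m-\mu}\,h^\mu \le C h^\mu$, so this remainder term is again absorbed into $Ch^\mu \|f\|_{W_2^\mu(\M)}$.

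Adding the two estimates then yields \eqref{interp_error_reproduce}. There is no real obstacle here; the only small point to be careful about is to ensure that $h$ is small enough simultaneously for Lemma~\ref{final_bnd_poly_reprod} and for the interpolation estimate (general, sphere, or projective space) used on $I_X f - f$, which amounts to choosing the smaller of the two thresholds from Proposition~\ref{interp_error_W_2_manifold}, Proposition~\ref{sphere_escape}, and Corollary~\ref{projective_escape}.
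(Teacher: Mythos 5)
Your proposal is correct and follows essentially the same route as the paper's proof: both start from Lemma~\ref{final_bnd_poly_reprod}, bound $\|P_\J f\|_{L_2(\M)}\le\|f\|_{L_2(\M)}\le\|f\|_{W_2^\mu(\M)}$, and control $\|I_Xf-f\|_{L_2(\M)}$ via (\ref{general_polyharm_error}), (\ref{sphere_escape_est}), and (\ref{projective_escape_est}). Your explicit remark that $h^m\le Ch^\mu$ absorbs the remainder term in the escape case is a small point the paper leaves implicit, but the argument is the same.
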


\begin{proof}
In (\ref{third_bnd}), we have $\|P_\J f\|_{L_2(\M)} \le \|
f\|_{L_2(\M)} \le \| f\|_{W_2^\mu(\M)}$; in addition, the various
bounds on $\| I_X f -f\|_{L_2(\M)}$ follow from
(\ref{general_polyharm_error}), (\ref{sphere_escape_est}), and
(\ref{projective_escape_est}). Combining these yields
(\ref{interp_error_reproduce}).
\end{proof}

\subsubsection{Optimal convergence rates  for interpolation via surface splines} \label{superconvergence}

For spheres, the interpolants constructed from the restricted thin-plate splines defined in (\ref{TPS}) converge at double the rates discussed above -- namely, $\calo (h^{2m})$ rather than $\calo (h^{m})$ --, provided $f$ is smooth enough. This also applies in $SO(3)$ for the kernels given in (\ref{def_so3}). The spaces $\Pi_\J$ being reproduced here are, for $\sph^n$, the spherical harmonics of degree $m-1$ or less; and for $SO(3)$, the Wigner D-functions for representations of the rotation group, again having order $m-1$ or less. The precise result is stated below.

\begin{proposition}[{\cite[Corollaries 5.9 \& 5.10]{HNW_3_2011}}] \label{sphere_result}
Suppose $m>n/2$ and let $I_{X,\J}$ denote the interpolation operator corresponding to 
the restricted surface spline defined in (\ref{TPS}) for $s=m-n/2$. Then, there is a constant $C$ depending on $\rho, m$ and $n$ such that, for a sufficiently dense set $X \subset \sph^n$,  and for $f\in C^{2m}(\sph^n)$,  the following holds:
\[
\|I_{X,\J}f - f\|_{\infty} \le C h^{2m} \|f\|_{C^{2m}(\sph^n)},
\]
Similarly, for the kernels in (\ref{def_so3}), if $f\in C^{2m}(SO(3))$, then $\| I_{X,\J}f - f\|_{\infty} \le C h^{2m} \|f\|_{C^{2m}(SO(3))}$.
\end{proposition}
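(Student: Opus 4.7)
The plan is to invoke the classical ``doubling'' argument for native-space interpolation error estimates, exploiting the Green's function interpretation of the polyharmonic kernel together with the polynomial reproduction of $\Pi_\J$. The key is that, while an $f \in W_2^m$ only yields $\calo(h^m)$ by the standard bound in Theorem \ref{interp_error_reproducion}, a function in $C^{2m}$ is smooth enough to be written as a kernel convolution against $\L_m f$, and the convolution picks up an extra factor of $h^m$ from the symmetry of the error kernel.

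First, because $\kappa$ is the (modified) Green's function for $\L_m = \pi_m(-\Delta)$, the expansion (\ref{polyharmonic_def}) gives $\L_m \kappa(x,y) = \delta(x-y)$ modulo a kernel in $\Pi_\J \otimes \Pi_\J$. Consequently, for any $f \in C^{2m}(\M)$ there is a $p_f \in \Pi_\J$ with
\[
f(x) = p_f(x) + \int_\M \kappa(x,y)\,(\L_m f)(y)\,d\mu(y).
\]
Since $I_{X,\J}$ reproduces $\Pi_\J$, subtracting $I_{X,\J}f$ from $f$ eliminates $p_f$ and commutes $I_{X,\J}$ with the integral, yielding the error representation
\[
f(x) - I_{X,\J}f(x) = \int_\M e(x,y)\,(\L_m f)(y)\,d\mu(y),
\quad e(x,y) := \kappa(x,y) - \bigl(I_{X,\J}\kappa(\cdot,y)\bigr)(x).
\]
Taking $L_\infty$ norms and using $\|\L_m f\|_\infty \le C\|f\|_{C^{2m}(\M)}$ reduces the proposition to the estimate $\sup_{x\in \M}\|e(x,\cdot)\|_{L_1(\M)} \le C\,h^{2m}$.

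The crucial observation is that $e(x,y)$ is symmetric (since $\kappa$ is symmetric and $I_{X,\J}\kappa(\cdot,y)$ is itself built from $\kappa$) and vanishes whenever either argument lies in $X$. For fixed $x$, $e(x,\cdot)$ is the interpolation error, in the second variable, of the native-space function $\kappa(x,\cdot)$ --- whose $\|\cdot\|_\kappa$-norm is bounded uniformly in $x$ by $\kappa(x,x)^{1/2}$. The standard argument (as in Proposition \ref{interp_error_W_2_manifold}, adapted to $\caln_\kappa$ via the norm equivalence (\ref{kernel_equiv})) gives $\|e(x,\cdot)\|_{L_2(\M)} \le Ch^m$. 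The doubling is achieved by applying this estimate \emph{again} in the first variable: since $e(\cdot,y)$ also vanishes on $X$ for each fixed $y$ and agrees pointwise with an element of $\caln_\kappa$, a second application of the zeros lemma (Theorem \ref{zeros_lemma}) combined with the minimization property of $I_{X,\J}$ in the native-space seminorm supplies the extra factor of $h^m$. Integrating and using $\mathrm{vol}(\M)<\infty$ converts the resulting $L_2$ or $L_\infty$ bound on $e$ into the needed $L_1$ estimate. The $SO(3)$ case then follows identically, once one notes that the kernels in (\ref{def_so3}) are polyharmonic for the polynomial $\pi(x) = \prod_{\nu=0}^{m-1}(x+1-4\nu^2)$, so all the ingredients above apply verbatim.

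The main obstacle is justifying the second application of the zeros lemma to $e(\cdot,y)$: one must verify that $e$ lies in the native space as a function of its first variable with a norm bound uniform in $y$, and that the minimization / Galerkin-orthogonality properties of $I_{X,\J}$ survive the symmetry swap. This is exactly the technical content of the doubling trick, and it relies in an essential way both on the polynomial reproduction $\Pi_\J$ and on sharp Lagrange-function control of the type furnished by Proposition \ref{bdd_lebesgue_sob_ker}.
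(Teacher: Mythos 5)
The paper does not prove this proposition; it is quoted verbatim from \cite[Corollaries 5.9 \& 5.10]{HNW_3_2011}, so the comparison is with that reference. Your skeleton is in fact the right one and matches the cited proof's starting point: write $f = p_f + \int_\M \kappa(\cdot,y)\,\L_m f(y)\,d\mu(y)$ with $p_f\in\Pi_\J$ (valid because $\kappa$ is a Green's function for $\L_m$ modulo $\Pi_\J$, and $\Pi_\J$ is reproduced), commute $I_{X,\J}$ with the integral, and reduce everything to $\sup_{x}\|e(x,\cdot)\|_{L_1(\M)}\le Ch^{2m}$ for the error kernel $e(x,y)=\kappa(x,y)-(I_{X,\J}\kappa(\cdot,y))(x)$.

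The gap is in how you propose to prove that $L_1$ bound. A double application of the zeros lemma combined with native-space minimization cannot deliver $h^{2m}$ in $L_1$ uniformly in $x$: the first application controls $\|e(x,\cdot)\|_{\kappa}$ by the power function $P(x)=e(x,x)^{1/2}$, whose sharp order is $h^{m-n/2}$ (not $h^{m}$, since it reflects the $L_\infty$ sampling inequality), and the second application then gives $\|e(x,\cdot)\|_{L_2}\le Ch^{m}\,P(x)\le Ch^{2m-n/2}$. Converting to $L_1$ by Cauchy--Schwarz over all of $\M$ leaves you stuck at $h^{2m-n/2}$; the same loss of $h^{n/2}$ afflicts the classical Schaback doubling trick in the $L_\infty$ setting. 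The missing factor $h^{n/2}$ can only be recovered by exploiting that $e(x,\cdot)$ is effectively supported in an $O(h\log(1/h))$-neighborhood of $x$, and that localization is exactly what the pointwise exponential decay of the Lagrange functions, Proposition~\ref{general_decay} (estimate (\ref{general_pointwise_lagrange_bound})), provides; this, not the Lebesgue-constant bound of Proposition~\ref{bdd_lebesgue_sob_ker} that you cite, is the essential technical input in \cite{HNW_3_2011}. You flag the second zeros-lemma application as ``the main obstacle'' but do not resolve it, and as stated it would fail to produce the claimed rate.
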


\subsection{Lagrange functions and Lebesgue constants}

If $\chi_\xi(x)$ is the Lagrange function associated with a kernel
$\kappa$ and a space $\Pi_\J$, then, by (\ref{weight_bound}), the
weights in the quadrature formula have the form $c_\xi = \int_\M
\chi_\xi(x)d\mu(x)$, $\xi\in X$. Our aim is to use properties of
$\chi_\xi$ to obtain bounds on these weights. Before we do this, we
will need the following decay estimates for $\chi_\xi$.

\begin{proposition}[{\cite[Theorem~5.5]{HNW_3_2011}}] \label{general_decay}
Suppose that $\M$ is an $n$-dimensional compact, two-point homogeneous
manifold and that $\kappa$ is a polyharmonic kernel, with
$\deg(\pi(x))=m$, where $m>n/2$.  There exist positive constants
$h_0$, $\nu$ and $C$, depending only on $m$, $\M$ and the operator
$\L_m=\pi(-\Delta)$ so that if the set of centers $X$ is quasi-uniform
with mesh ratio $\rho$ and has mesh norm $h\le h_0$, then the Lagrange
functions for interpolation by $\kappa$ with auxiliary space
$\Pi_{\J}$ satisfy
\begin{equation}\label{general_pointwise_lagrange_bound}
 |\chi_{\xi}(x)| \le C \rho^{m-n/2} \max\left(\exp\left(-
 \frac{\nu}{h} d(x,\xi) \right), h^{2m}\right).
 \end{equation}

\end{proposition}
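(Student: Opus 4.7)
The plan is to adapt the exponential-decay technique for kernel Lagrange functions on two-point homogeneous manifolds from [HNW\_3\_2011], which itself generalizes the Hangelbroek-type approach for spheres. The argument has three ingredients: a global native-space norm bound for $\chi_\xi$, a localization (bulk-chasing) argument that produces the exponential rate, and a final conversion to pointwise estimates via the zeros lemma.

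First, I would bound the native-space norm $\|\chi_\xi\|_\kappa$. Because $\chi_\xi$ is the minimum-$\|\cdot\|_\kappa$ interpolant of $\{\delta_{\xi,\eta}\}_{\eta\in X}$ reproducing $\Pi_\J$, one may majorize its norm by that of any competing $W_2^m(\M)$ interpolant, using the norm equivalence (\ref{kernel_equiv}). Construct a smooth bump $b_\xi$ supported in $B(\xi,q)$ with $b_\xi(\xi)=1$; since the ball contains no other center, $b_\xi$ vanishes on $X\setminus\{\xi\}$ automatically. Standard scaling on $\M$ gives $\|b_\xi\|_{W_2^m(\M)}\le C q^{-m+n/2}$, so using $q\ge h/\rho$ yields the global estimate $\|\chi_\xi\|_\kappa\le C\rho^{m-n/2} h^{-m+n/2}$.

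Second, I would establish the exponential decay of the tail energy. For an integer $K$, define the local Lagrange function $\chi_\xi^{(K)}$ that interpolates $\delta_{\xi,\cdot}$ from the centers in $B(\xi,Kh)$ only. One shows inductively, on nested annuli of width proportional to $h$, that the native semi-norm of $\chi_\xi$ restricted to $\M\setminus B(\xi,r)$ decays like $e^{-\nu r/h}$ for some $\nu>0$ depending only on $m$, $\L_m$, and $\M$. The inductive step rests on the observation that $\chi_\xi-\chi_\xi^{(K)}$ vanishes on $X\cap B(\xi,Kh)$; applying the zeros lemma (Theorem~\ref{zeros_lemma}) on each annulus and invoking the minimization property of Lagrange functions shows that trimming a half-annulus shrinks the native energy by a uniform factor strictly less than one. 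Iterating produces the geometric decay.

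Third, I would convert the localized energy bound into the pointwise estimate (\ref{general_pointwise_lagrange_bound}). For $x$ with $d(x,\xi)$ exceeding a fixed multiple of $h$, apply the zeros lemma on a geodesic ball $B(x,ch)$ inside which $\chi_\xi$ vanishes at every center (since $\xi\notin B(x,ch)$), obtaining $|\chi_\xi(x)|\le C h^{m}\|\chi_\xi\|_{W_2^m(B(x,ch))}$. Combining this with the exponential tail bound from step two gives the factor $\rho^{m-n/2}e^{-\nu d(x,\xi)/h}$. Once $d(x,\xi)/h$ is so large that the exponential drops below $h^{2m}$, the same zeros-lemma argument, now applied against the global native-norm bound from step one, produces the saturation value $h^{2m}$, accounting for the $\max$ in the stated inequality. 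The main obstacle is the second step: propagating the decay across annuli with constants uniform in the base point $\xi$. On a two-point homogeneous space, the invariance of the metric and the explicit Jacobi-polynomial expansion (\ref{poly_basis_function}) of $\kappa$ are what make those constants independent of $\xi$; without such homogeneity one would need substantially more delicate geometric estimates.
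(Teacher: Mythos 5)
The paper does not actually prove Proposition~\ref{general_decay}; it is quoted verbatim from \cite[Theorem~5.5]{HNW_3_2011}, so there is no internal proof to compare against. Your three-step architecture --- a bump-function competitor giving $\|\chi_\xi\|_\kappa \le C\rho^{m-n/2}h^{n/2-m}$, a bulk-chasing iteration over annuli yielding geometric decay of the tail energy, and a local zeros-lemma conversion supplying the factor $h^{m-n/2}$ that turns the tail energy into the pointwise factor $\rho^{m-n/2}e^{-\nu d(x,\xi)/h}$ --- is indeed the strategy of the cited proof, and your first two steps are sound as sketched.

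The genuine gap is your account of the $h^{2m}$ saturation term. Applying the zeros lemma on $B(x,ch)$ against the \emph{global} native-norm bound from your step one yields only $|\chi_\xi(x)|\le Ch^{m-n/2}\cdot\rho^{m-n/2}h^{n/2-m}=C\rho^{m-n/2}$, an $O(1)$ ceiling --- nowhere near $h^{2m}$ --- so no combination of your three ingredients can produce the stated floor. In the actual argument the floor comes from the conditional positive definiteness: the Lagrange function reproducing $\Pi_{\J}$ differs from the one for the associated positive definite (modified) kernel by a correction assembled from the interpolation errors of the finitely many smooth eigenfunctions spanning $\Pi_{\J}$, and those errors are uniformly $O(h^{2m})$ by the doubled (``superconvergent'') rates for infinitely smooth targets. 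That correction is small but does \emph{not} decay in $d(x,\xi)$, which is precisely why (\ref{general_pointwise_lagrange_bound}) is a $\max$ with $h^{2m}$ rather than a pure exponential. As written, your argument establishes the exponential part only for the positive definite case and leaves the $h^{2m}$ term unjustified.
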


There are two results that we want to obtain. First, we want to
estimate the size of each weight in terms of the mesh norm $h\,$; and
second, we want to estimate $\sum_{\xi\in X}|c_\xi|$.

We only need to estimate $\|\chi_\xi\|_{L_1(\M)}$, since $|c_\xi|\le
\|\chi_\xi\|_{L_1(\M)}$. Our approach is to divide the manifold into a
ball $B=B_{\xi,R_h}$, having radius $R_h$ and center $\xi$, and its
complement $B^\complement$. The radius $R_h$ is the ``break-even''
distance in (\ref{general_pointwise_lagrange_bound}); it is obtained
by solving $\exp\left(- \frac{\nu}{h} R_h \right)= h^{2m}$ for
$R_h$. The result is $R_h= \frac{2m}{\nu}h |\log(h)|$. By
(\ref{general_pointwise_lagrange_bound}), we obtain
\[
\int_{B_0^\complement}\big| \chi_\xi(x)\big|d\mu(x) \le
C\rho^{m-n/2}\mu(\M)h^{2m}.
\]
Next, again by (\ref{general_pointwise_lagrange_bound}), we have
that
\[
\int_B |\chi_\xi(x)| d\mu(x) \le
C'\rho^{m-n/2}\int_0^{R_h}e^{-r\nu/h}r^{n-1}dr
<C'\rho^{m-n/2}\underbrace{\int_0^\infty e^{-r\nu/h}r^{n-1}dr}_{(n-1)!(h/\nu)^n}
\]
Consequently, $ |c_\xi|\le \|\chi_\xi\|_{L_1(\M)}\le
C\rho^{m-n/2}(h/\nu)^n(1+h^{2m-n}\nu^n) $.  Because $2m>n$, for $h$
small enough, it follows that
\begin{equation}\label{simple_weight_bound}
|c_\xi|\le  \|\chi_\xi\|_{L_1(\M)} \le C'\rho^{m-n/2}h^n,
\end{equation}
where $C'=C'(\M,\kappa,\J)$. 
 
The next result concerns the boundedness of the Lebesgue constant, which played a significant role in section~\ref{accuracy_stability} in the analysis of the stability of the quadrature operator.
 
\begin{proposition}[{\cite[Theorem~5.6]{HNW_3_2011}}] 
\label{polyharmonic_lebesgue}
Let the notation be the same as that in
Proposition~\ref{general_decay}. If $h\le h_0$, then the Lebesgue
constant, $\Lambda_{X,\J,\kappa} = \max_{x\in \M} \sum_{\xi\in X}
|\chi_{\xi}(x)|$, associated with $X$, $\J$, and $\kappa$, satisfies
the bound $\Lambda_{X,\J,\kappa}\le C$, where $C$ depends only on
$m$, $\rho$, and $\M$.
\end{proposition}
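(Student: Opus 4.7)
The plan is to bound $\sum_{\xi \in X}|\chi_\xi(x)|$ uniformly in $x$ by exploiting the pointwise estimate from Proposition~\ref{general_decay} together with a volume-counting argument based on the separation $q = h/\rho$. Fix $x \in \M$ and recall the break-even radius $R_h = (2m/\nu)h|\log h|$, chosen so that $\exp(-\nu R_h/h) = h^{2m}$. Split $X$ into $X_{\text{near}} = \{\xi \in X : d(x,\xi) \le R_h\}$ and $X_{\text{far}} = X \setminus X_{\text{near}}$.

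For the far sum, use the second branch of (\ref{general_pointwise_lagrange_bound}):
\[
\sum_{\xi \in X_{\text{far}}} |\chi_\xi(x)| \le C\rho^{m-n/2} h^{2m}\,\#X.
\]
Because $X$ is $q$-separated and $\M$ is compact, a standard packing bound gives $\#X \le C_\M q^{-n} = C_\M \rho^n h^{-n}$. Since $m > n/2$, we get $2m - n > 0$, so this contribution is bounded by $C\rho^{m+n/2} h^{2m-n} \le C'(\M,m,\rho)$ whenever $h \le h_0$.

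For the near sum, decompose $X_{\text{near}}$ into geodesic annuli
\[
A_k := \{\xi \in X : (k-1) q \le d(x,\xi) < k q\}, \qquad k = 1, 2, \ldots, K,
\]
where $K = \lceil R_h/q\rceil$. The separation of $X$ implies, via the usual ball-counting argument on a Riemannian manifold of bounded geometry, that $\#A_k \le C_\M k^{n-1}$ for $k \ge 1$. On $A_k$, we have $d(x,\xi) \ge (k-1)q = (k-1)h/\rho$, so the first branch of (\ref{general_pointwise_lagrange_bound}) gives $|\chi_\xi(x)| \le C\rho^{m-n/2}\exp(-\nu(k-1)/\rho)$. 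Summing,
\[
\sum_{\xi \in X_{\text{near}}} |\chi_\xi(x)| \le C\rho^{m-n/2}\sum_{k=1}^{\infty} k^{n-1} e^{-\nu(k-1)/\rho} = C(m,n,\rho,\M),
\]
since the series converges geometrically with rate depending only on $\rho$ and $\nu$. Combining the near and far estimates gives the desired bound $\Lambda_{X,\J,\kappa} \le C(m,\rho,\M)$ uniformly in $x$.

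The main obstacle is the careful volume/packing bound $\#A_k \le C_\M k^{n-1}$: one needs that annuli of width $q$ at geodesic distance $\sim kq$ have volume $\lesssim (kq)^{n-1} q$ and contain at most that many $q$-separated points. This is standard on a compact manifold once $kq$ stays below the injectivity radius, but for the outermost annuli where $kq$ is comparable to the diameter of $\M$, one uses the trivial bound $\#A_k \le \#X \le C\rho^n h^{-n}$, which the exponential factor $e^{-\nu k/\rho}$ easily absorbs since $K \sim |\log h|$. Once this geometric bookkeeping is settled, the remainder is routine summation.
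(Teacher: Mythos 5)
Your argument is correct: the paper does not prove this proposition in-text (it simply cites \cite[Theorem~5.6]{HNW_3_2011}), and your derivation from the pointwise decay estimate of Proposition~\ref{general_decay} --- splitting at the break-even radius $R_h$, using the $h^{2m}$ branch with the packing bound $\#X\lesssim q^{-n}$ for the far points, and summing over $q$-width annuli with $\#A_k\lesssim k^{n-1}$ and the geometric factor $e^{-\nu(k-1)/\rho}$ for the near points --- is exactly the standard route by which the cited reference obtains the bounded Lebesgue constant from the Lagrange-function decay. The one remark worth making is that your caveat about outermost annuli reaching the diameter of $\M$ is moot here, since the near region only extends to $R_h\sim h|\log h|$, which is well below the injectivity radius for $h\le h_0$.
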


\subsection{Quadrature via polyharmonic kernels}
\label{polyharmonic_kappa_quadrature}

In this section, we will employ the various properties of polyharmonic
kernels, which are of course invariant, to obtain results concerning
accuracy and stability of the associated quadrature formulas. 

\begin{theorem} \label{accuracy_stabiliy_polyharmonic} 
Suppose that $\M$ is an $n$-dimensional compact, two-point homogeneous
manifold and that $\kappa$ is a polyharmonic kernel, with
$\deg(\pi(x))=m$, where $m>n/2$.  Let $X\subset \M$ be a finite set
having mesh ratio  $\rho_X\le \rho$, $\Pi_\J = \spn\{\phi_{j,s}:j\in
\J, \, 1\le s \le d_j\}$, and $Q_{V_X}$ be the corresponding quadrature operator
given in Definition~\ref{quadrature_def} . The norm of $Q_{V_X}$ is bounded,
$\|Q_{V_X}\|_{C(\M)} \le \mu(\M)\Lambda_{X,\J,\kappa}\le C_{m,\rho,\M}$, and
the error satisfies the estimates
\begin{equation}\label{quad_error_reproduce}
\big| Q_{V_X}(f) - \int_{\M} fd\mu \big| \le 
\left\{
\begin{array}{cl}
Ch^m \| f \|_{W_2^m(\M)}, & \text{\rm all } \M, 
\ \text{and } f\in
W_2^m(\M), \\ [7pt] 
Ch^\mu \| f \|_{W_2^\mu(\M)}, & \M = \sph^n,
\rproj^n, \text{and }f\in W_2^\mu(\M), \ n/2<\mu\le m.
\end{array}
\right.
\end{equation}
Finally, the standard deviation $\sigma_Q$ from
Proposition~\ref{Q_standard_dev_prop} satisfies $\sigma_Q\le
C\sigma_\nu h^{n/2}$, where $C=C(\rho,m,\M,\J)$. 
\end{theorem}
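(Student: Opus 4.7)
The plan is essentially to assemble Theorem \ref{accuracy_stabiliy_polyharmonic} from the pieces already built up in section \ref{polyharmonic_kernels}, in the same way that Theorem \ref{accuracy_stabiliy_kappa_m} was put together from the corresponding results on $\kappa_m$. Polyharmonic kernels are zonal, hence invariant under the group action, so Lemma \ref{invariance_integral} applies and the quadrature operator $Q_{V_X}$ of Definition \ref{quadrature_def} is well defined, with its weights computable from the finite system (\ref{weight_eqns}). The three assertions to prove --- boundedness of $\|Q_{V_X}\|_{C(\M)}$, the error estimate, and the noise stability bound --- each reduces directly to an inequality from section \ref{accuracy_stability} combined with one polyharmonic input.

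For the operator norm, I would apply (\ref{quad_op_norm_lebesgue}) to get the first inequality $\|Q_{V_X}\|_{C(\M)}\le \mu(\M)\Lambda_{X,\J,\kappa}$, and then invoke Proposition \ref{polyharmonic_lebesgue}, which under the hypothesis $h\le h_0$ gives $\Lambda_{X,\J,\kappa}\le C$ with $C=C(m,\rho,\M)$. Combining the two yields the stated bound $\|Q_{V_X}\|_{C(\M)}\le C_{m,\rho,\M}$.

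For the error estimate, I would start from the general accuracy inequality (\ref{quad_op_accuracy}),
\[
\Bigl|Q_{V_X}(f)-\int_\M f\,d\mu\Bigr|\le (\mathrm{vol}(\M))^{1/2}\|s_f-f\|_{L_2(\M)},
\]
with $s_f = I_{X,\J}f$. The two cases of (\ref{quad_error_reproduce}) then follow by plugging in the two cases of Theorem \ref{interp_error_reproducion}: on a general compact two-point homogeneous manifold one uses the $\calo(h^m)$ bound for $f\in W_2^m(\M)$, and for $\M=\sph^n$ or $\rproj^n$ one uses the ``escape'' bound $\calo(h^\mu)$ valid for $f\in W_2^\mu(\M)$ with $n/2<\mu\le m$. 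Only the mesh-ratio and smallness conditions from Proposition \ref{interp_error_W_2_manifold} are required, and these are already implicit in the hypotheses.

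Finally, for the standard deviation, I would invoke Proposition \ref{Q_standard_dev_prop} in the chain
\[
\sigma_Q^2 \le \mathrm{vol}(\M)\,\sigma_\nu^2\,\Lambda_{X,\J,\kappa}\,\max_{\xi\in X}\|\chi_\xi\|_{L_1(\M)}.
\]
Then $\Lambda_{X,\J,\kappa}$ is controlled by Proposition \ref{polyharmonic_lebesgue} and $\max_{\xi\in X}\|\chi_\xi\|_{L_1(\M)}$ by the bound (\ref{simple_weight_bound}), which gives $\|\chi_\xi\|_{L_1(\M)}\le C'\rho^{m-n/2}h^n$. Multiplying these together yields $\sigma_Q^2\le C\sigma_\nu^2 h^n$, hence $\sigma_Q\le C\sigma_\nu h^{n/2}$ with a constant depending only on $\rho, m, \M$, and $\J$. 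There is no real obstacle here: every ingredient has been established in the preceding subsections, and the only care needed is to track where the mesh-ratio factor $\rho^{m-n/2}$ from the pointwise Lagrange decay enters the constant, and to note that $2m>n$ was already used in deriving (\ref{simple_weight_bound}) so the $h^{2m-n}$ correction there is negligible for small $h$.
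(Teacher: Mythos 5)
Your proposal is correct and follows essentially the same route as the paper: the norm bound from (\ref{quad_op_norm_lebesgue}) plus Proposition~\ref{polyharmonic_lebesgue}, the error bound from (\ref{quad_op_accuracy}) plus Theorem~\ref{interp_error_reproducion}, and the variance bound from Proposition~\ref{Q_standard_dev_prop} together with Proposition~\ref{polyharmonic_lebesgue} and (\ref{simple_weight_bound}). The paper's own proof is a three-sentence assembly of exactly these ingredients, so nothing further is needed.
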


\begin{proof}
The norm estimate follows from (\ref{quad_op_norm_lebesgue}) and
Proposition~\ref{polyharmonic_lebesgue}. The error estimate is a
consequence of (\ref{quad_op_accuracy}) and
Theorem~\ref{interp_error_reproducion}. Finally, the bound on
$\sigma_Q$ is a consequence of Proposition~\ref{Q_standard_dev_prop},
Proposition~\ref{polyharmonic_lebesgue}, and the bound on
$\|\chi_\xi\|$ in (\ref{simple_weight_bound}).
\end{proof}

At present, the most important compact two-point homogeneous manifolds are spheres and projective spaces (especially $\sph^2$ and $SO(3))$. As we discussed in section~\ref{superconvergence}, the restricted thin-plate splines (\ref{TPS}) on $\sph^n$ and similar kernels (\ref{def_so3}) on $SO(3)$ give interpolants with optimal convergence for smooth target functions. This also is reflected in the accuracy of the corresponding quadrature formulas:

\begin{corollary}\label{TPS_quadrature}
Let $X$, $\rho_X$, $\rho$, $m$, $\Pi_\J$  be as in Theorem~\ref{accuracy_stabiliy_polyharmonic}. Take $\M=\sph^n$ or $SO(3)$ and $Q$ to be the quadrature operator corresponding to the restricted surface spline defined in (\ref{TPS}) or to that for the  polyharmonic kernel in (\ref{def_so3}). If $X$ is a sufficiently dense in $\M$, then there is a constant $C=C(m,n,\rho)$ such that for a sufficiently dense set $X \subset \sph^n$ and for all $f\in C^{2m}(\sph^n)$,  
\[
 \big|Q(f) - \int_{\M} f d\mu \big| \le C h^{2m} \|f\|_{C^{2m}(\sph^n)},\ \text{where }C=C(m,n,\rho),
\]
in addition to the bounds on $\|Q\|$ and $\sigma_Q$ from Theorem~\ref{accuracy_stabiliy_polyharmonic} holding.
\end{corollary}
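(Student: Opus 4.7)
The proof I would give is essentially a straightforward combination of the accuracy bound (\ref{quad_op_accuracy}) with the sharper interpolation estimate of Proposition~\ref{sphere_result}. The plan is as follows.

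First, since the kernels in question are exactly the restricted surface splines (\ref{TPS}) on $\sph^n$ and the kernels (\ref{def_so3}) on $SO(3)$ treated in Proposition~\ref{sphere_result}, the quadrature operator $Q$ in the statement is precisely $Q_{V_X}$ in the sense of Definition~\ref{quadrature_def}, with interpolant $s_f=I_{X,\J}f$. Thus from (\ref{quad_op_accuracy}) I can start from
\[
\Big|Q(f)-\int_{\M}f\,d\mu\Big|\le \|s_f-f\|_{L_1(\M)}\le \mu(\M)\,\|I_{X,\J}f-f\|_{\infty},
\]
where I have used the crude bound $\|\cdot\|_{L_1(\M)}\le \mu(\M)\|\cdot\|_\infty$, which is valid because $\M$ is compact.

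Second, I would invoke Proposition~\ref{sphere_result} directly: for $X$ sufficiently dense and $f\in C^{2m}(\M)$ (where $\M=\sph^n$ or $SO(3)$),
\[
\|I_{X,\J}f-f\|_{\infty}\le C\, h^{2m}\,\|f\|_{C^{2m}(\M)},
\]
with $C=C(m,n,\rho)$. Combining this with the previous inequality gives the desired error bound
\[
\Big|Q(f)-\int_{\M}f\,d\mu\Big|\le C\,\mu(\M)\,h^{2m}\,\|f\|_{C^{2m}(\M)},
\]
after absorbing the constant $\mu(\M)$ into $C$.

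Third, the remaining conclusions about $\|Q\|_{C(\M)}$ and $\sigma_Q$ require no new work: they are already contained verbatim in Theorem~\ref{accuracy_stabiliy_polyharmonic}, which applies since the restricted surface splines and the $SO(3)$ kernels in (\ref{def_so3}) are polyharmonic of order $m$ in the sense of (\ref{polyharmonic_def}) (this is stated in the paper following equations (\ref{TPS}) and (\ref{def_so3})), with $\Pi_\J$ chosen as spherical harmonics of degree $<m$ on $\sph^n$ or Wigner D-functions of order $<m$ on $SO(3)$. Hence there is no real obstacle here; the only ``content'' of the corollary beyond Theorem~\ref{accuracy_stabiliy_polyharmonic} is the doubled convergence rate $h^{2m}$, and that comes for free once Proposition~\ref{sphere_result} is applied in place of Theorem~\ref{interp_error_reproducion}.
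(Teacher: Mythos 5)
Your proposal is correct and is exactly the argument the paper intends: the corollary is stated without an explicit proof precisely because it follows by replacing Theorem~\ref{interp_error_reproducion} with the superconvergence bound of Proposition~\ref{sphere_result} in the chain $\big|Q(f)-\int_\M f\,d\mu\big|\le\|s_f-f\|_{L_1(\M)}\le \mu(\M)\|I_{X,\J}f-f\|_\infty$, while the norm and $\sigma_Q$ bounds carry over unchanged from Theorem~\ref{accuracy_stabiliy_polyharmonic}. No gaps.
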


Because of applications in physical sciences and engineering, $\sph^2$ is undoubtedly the most important of the manifolds treated here.  In the next section, we give some numerical examples for $\sph^2$ and the $m=2$ surface spline $\Phi(t) = (1-t)\log(1-t)$ (i.e. the thin plate spline $r^2\log(r^2)$ restricted to $\sph^2$) that validate the above theory.

\section{Numerical results for $\sph^2$}
\label{numerics}
We begin with a brief overview of how the quadrature weights can be computed in an efficient manner for the $m=2$ surface spline $\Phi(t) = (1-t)\log(1-t)$ using the local Lagrange preconditioner developed in~\cite{FHNWW2012}.  This is followed by a description of the nodes used in the numerical experiments and some properties of the resulting surface spline quadrature weights and their stability.  Finally, we give some results validating the error estimates from the previous section.

\subsection{Computing the quadrature weights}
The $m=2$ restricted surface spline kernel is conditionally positive definite of order 1 and the finite dimensional subspace $\Pi$ associated with it consists of all spherical harmonics of degree $\leq 1$, i.e. $\Pi: = \text{span}\{Y_{0,0}, Y_{1,0}, Y_{1,1}, Y_{1,2}\}$, where $Y_{\ell,k}$ is the degree $\ell$ and order $k$ spherical harmonic and $0\le k\le 2\ell+1$.  Given a set $X = \{x_j\}_{j=1}^{N}$ of distinct nodes on $\sph^2$, the quadrature weights $c$ for this kernel can be computed by first solving \eqref{eq:weight_eqns_perp} for $c_{\perp}$ and then computing $c$ via \eqref{eq:weights}.  In these equations, $A_{i,j} = (1-x_i \cdot x_j)\log(1-x_i\cdot x_j)$, $i,j=1,\ldots,N$, and $\Psi$ is the $N$-by-$4$ matrix with columns $\Psi_{i,1} = Y_{0,0}(x_i)$ and $\Psi_{i,k+2} = Y_{1,k}(x_i)$, for $i=1,\cdots,N$, $k=0,1,2$.  Additionally, $J=\begin{bmatrix}4\pi & 0 & 0 & 0\end{bmatrix}^T$ and $J_0 = 2\pi(4\log(2)-1)$.   

Since $\Psi$ only has 4 columns, computing $\Psi(\Psi^T \Psi)^{-1}J$ in \eqref{eq:weight_eqns_perp} can be done rapidly using, for example, QR decomposition.   Thus, the bulk of the computational effort in computing the quadrature weights is in solving for $c_{\perp}$ in \eqref{eq:weight_eqns_perp}.  Since the matrix $A$ is dense, direct methods cannot be realistically applied for large $N$, and one must then resort to iterative methods.  However, for iterative methods to be useful, one must apply an effective preconditioner to the system.  In~\cite{FHNWW2012}, we developed a powerful preconditioner for \eqref{eq:weight_eqns_perp} based on \emph{local Lagrange functions} and combined it with the generalized minimum residual (GMRES) iterative method ~\cite{SaadSchultz}.  The basic idea of the preconditioner is, for every node $x_j\in X$, to compute the surface spline interpolation weights for a small subset of nodes about $x_j$ consisting its $p = M(\log N)^2$ nearest neighbors, where $M$ is suitably chosen constant. The data for each interpolant is taken to be cardinal about $x_j$.  This is similar to the preconditioner used in~\cite{Faul-Powell-99-1} for interpolation on a 2-D plane, however, in that study the number of nodes in the local interpolants did not grow with $N$ and it was observed that the preconditioner broke down as $N$ increased.  As demonstrated in~\cite{FHNWW2012}, by allowing the nodes to grow very slowly with $N$ the preconditioner remained effective and the total number of iterations required by GMRES to reach a desired tolerance did not increase with $N$.   We refer the reader to~\cite{FHNWW2012} for complete details on the construction of the preconditioner.

In the numerical results that follow, we used the preconditioned GMRES technique from~\cite{FHNWW2012} with $p = 2 \lceil (\log N)^2\rceil$ and a relative tolerance of $10^{-12}$ to solve for $c_{\perp}$ in \eqref{eq:weight_eqns_perp}.  We then used this in \eqref{eq:weights} to find $c$.  Table \ref{tbl:iterations} lists the number of GMRES iterations required to compute $c_{\perp}$ for three different quasi-uniform node families, which are described in the next section.  We see from the table that the number of iterations remains fairly for constant as $N$ grows for all three families of nodes.  

\begin{table}[h]
\centering
\begin{tabular}{||r|c||r|c||r|c||}
\hline
\hline
\multicolumn{2}{||c||}{Icosahedral} & \multicolumn{2}{c||}{Fibonacci} & \multicolumn{2}{c||}{Min. Energy} \\
N & Iterations & N & Iterations & N & Iterations \\
\hline
\hline
$2562$ & $8$ & $2501$ & $9$ & $2500$ & $9$ \\
$10242$ & $7$ & $10001$ & $8$ & $10000$ & $8$ \\
$23042$ & $7$ & $22501$ & $11$ & $22500$ & $7$ \\
$40962$ & $7$ & $40001$ & $8$ & $40000$ & $8$ \\
$92162$ & $6$ & $62501$ & $10$ & $62500$ & $7$ \\
$163842$ & $7$ & $90001$ & $10$ & $90000$ & $7$ \\
$256002$ & $6$ & $160001$ & $8$ & $160000$ & $7$ \\
$655362$ & $6$ & $250001$ & $10$ & $250000$ & $7$  \\
\hline
\hline
\end{tabular}
\caption{Number of GMRES iterations required to compute $c_{\perp}$ in  \eqref{eq:weight_eqns_perp} using the preconditioned iterative method developed~\cite{FHNWW2012} for determining the $m=2$ surface spline.  The quadrature weights are then computed from  \eqref{eq:weights}.  In all cases, the relative tolerance of the GMRES method was set to $10^{-12}$.\label{tbl:iterations}}
\end{table}

We conclude by noting that as part of the iterative method, matrix-vector products involving $A$ must be computed.  Since $A$ is dense, this requires $\calo(N^2)$ operations per matrix-vector product.  In the computations performed for this study, these products were computed directly, making the overall cost of the weight computation $\calo(N^2)$.  In a follow up study we will explore the use of fast, approximate matrix-vector products using the algorithm described in~\cite{Keiner:2006:FSR:1152729.1152732}.  By using this algorithm, it may be possible to reduce the total cost of computing the weights (or a surface spline interpolant) to $\calo(N\log N)$.

\subsection{Nodes, weights, and stability}
We consider three quasi-uniform families of nodes for the numerical experiments.  The first is the icosahedral nodes, which are obtained from successive refinement of the 20 spherical triangular faces formed from the icosahedron.  
The second are the Fibonacci (or \emph{phyllotaxis}) nodes, which mimic certain plant behavior in nature (see, for example,~\cite{Gonzalez:2010} and the references therein).  The third are the quasi-minimum energy nodes, which are obtained by arranging the nodes so that their Riesz energy is near minimal~\cite{HarinSaff04}.  In the examples below, a power of 3 was used in the Riesz energy and the nodes were generated using the technique described in~\cite{BorodachovHardinSaff:2012}.  The mesh norm for all three of these families satisfies $h \sim \frac{1}{\sqrt{N}}$, where $N$ is the total number of nodes.  The mesh ratio $\rho$ stays roughly constant for the Fibonacci and quasi-minimum energy nodes as $N$ increases.  For the icosahedral points $\rho$ grows slowly with $N$ since the spacing of the nodes decreases faster towards the vertices of the triangles of the base icosahedron than at the centers~\cite{Saff-Kuijlaars-97-1}.  However, in the numerical examples that follow, this increase seems to be of little concern.  All three of these families of nodes are quite popular in applications; see, for example~\cite{Giraldo:1997,StuhnePeltier:1999,Ringler:2000GeodesicGrids,Majewski:2002GME} for the icosahedral nodes,~\cite{SwinbankPurser:2006,SlobbeSimonsKlees:2012,HuttigKai:2008} for the Fibonacci nodes, and~\cite{WrightFlyerYuen,flyer_wright2009,FlyerLehtoBlaiseWrightStCyr2012,SWFK2012} for the quasi-minimum energy nodes.  Quadrature over these nodes also plays an important role in applications, for example, for computing the mass of a certain quantity, the energy for a certain process, or the spectral decomposition of some data.  

It should be noted that previous studies have been devoted to developing quadrature formulas and error estimates for icosahedral~\cite[Ch. 5]{atkinson2012spherical} and Fibonacci~\cite{HannayNye:2004} nodes.  However, these results rely on the specific construction of the node sets and cannot be applied to more general quasi-uniformly distributed nodes such as the quasi-minimum energy nodes.

\begin{figure}[t!]
\centering
\begin{tabular}{cc}
\includegraphics[width=0.48\textwidth]{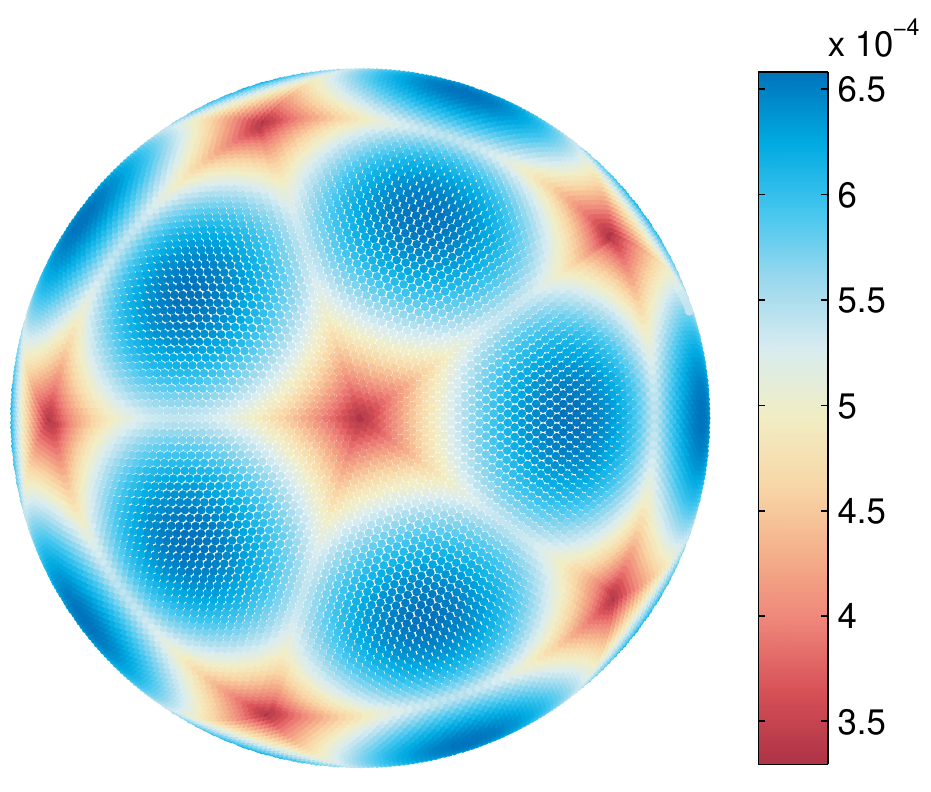} &
\includegraphics[width=0.48\textwidth]{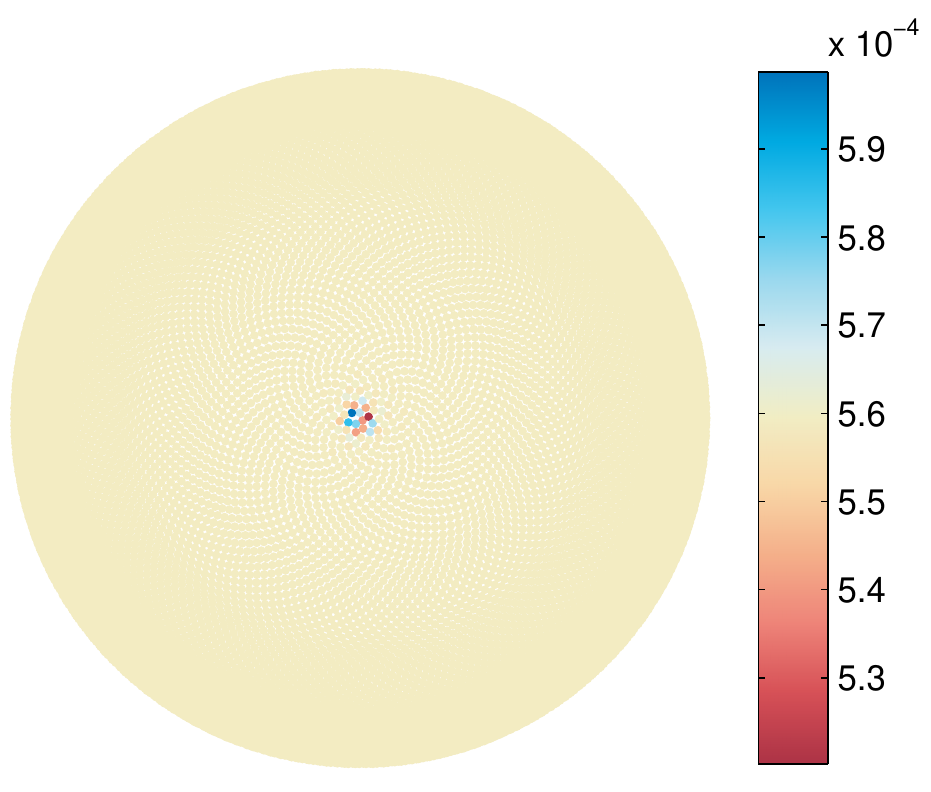} \\
(a) Icosahedral nodes and weights, $N=23042$ & (b) Fibonacci nodes and weights, $N=22501$ \\
\multicolumn{2}{c}{\includegraphics[width=0.48\textwidth]{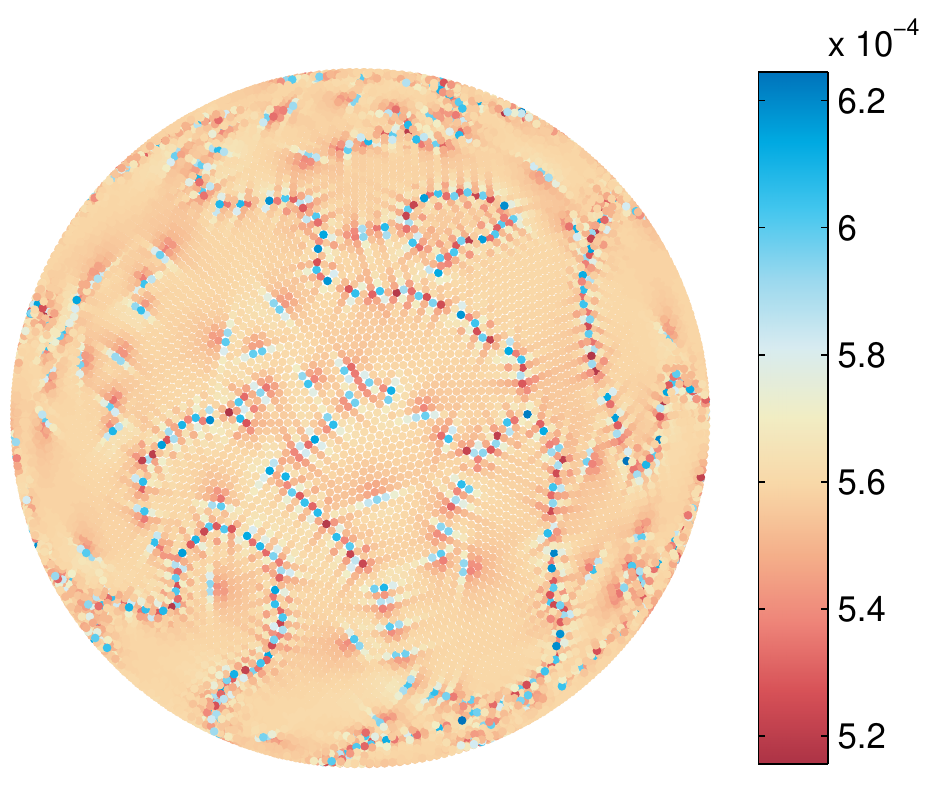}} \\
\multicolumn{2}{c}{(c) Quasi-minimum energy nodes and weights, $N=22500$}
\end{tabular}
\caption{Visualization of the different families of nodes used in the numerical experiments and the corresponding quadrature weights for the $m=2$ spherical spline.  The nodes are plotted on $\sph^2$ using an orthographic projection about the north pole.  Each node has been given a color corresponding to the value of the quadrature weight for that node.\label{fig:quad_wghts}}
\end{figure}

Figures \ref{fig:quad_wghts} (a)--(c) show examples of the different families of nodes and provide a visualization of the values of the corresponding quadrature weights for the $m=2$ surface spline.  The geometric pattern of the icosahedral nodes in part (a) of this figure are clearly reflected in the values of the corresponding quadrature weights.  This is also true of the Fibonacci nodes in part (b), which have a slight clustering near their seed value (in this case the north pole), but then are quasi-uniformly distributed.  There are no clear patterns for the minimum energy nodes in part (c) of Figure \ref{fig:quad_wghts}, as these are not distributed in a discernible pattern.  Looking at the color bars in each of the plots we see that the range of values of the weights is similar between the different node families and comparable to $1/N$,  and are also positive.

\begin{figure}[ht]
\centering
\includegraphics[width=0.48\textwidth]{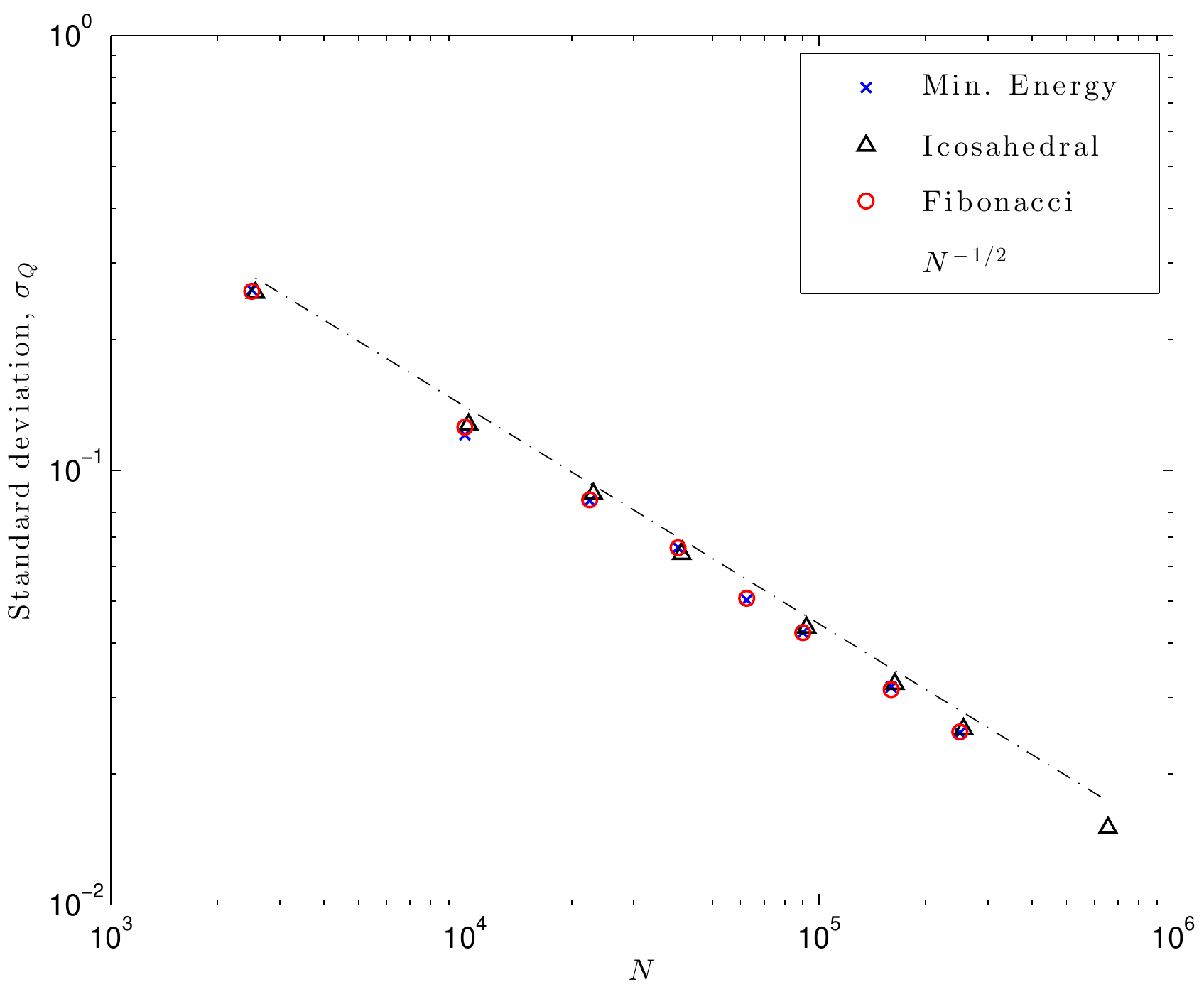}
\caption{Estimate of the standard deviation $\sigma_Q$ in \eqref{expectation_variance_Q} for the different families of nodes.  The estimate is based on sample sets of 500 $N$-point quadratures of different independent, identically, distributed, zero mean (quasi) random data with a standard deviation of 1.   This confirms the stability estimate in the last part of Theorem \ref{accuracy_stabiliy_polyharmonic}. \label{fig:stability}}
\end{figure}

To estimate $\sigma_Q$ in \eqref{expectation_variance_Q}, and hence the stability of the quadrature weights in the presence of noise, we performed the following experiments.   For each family of nodes, a value of $N$ was selected and then a sample set was generated consisting of 500 values of the $N$-point quadrature of different independent, identically, distributed, zero mean (quasi) random data.  The standard deviation of the sample sets was then computed to estimate $\sigma_Q$.  The results are plotted in Figure \ref{fig:stability} as a function of $N$ for each of the three families of nodes.  Comparing the results to the dashed line on the plot, we see that the estimated $\sigma_Q$ decreases like $\calo(N^{-1/2})$, or like $\calo(h)$ since $h \sim N^{-1/2}$ for these families of nodes.  This is in perfect agreement with the rate predicted by the last part of Theorem \ref{accuracy_stabiliy_polyharmonic} for the surface splines. 

\subsection{Convergence results}
Two target functions of different smoothness were used to test the error estimates of Theorem \ref{accuracy_stabiliy_polyharmonic} and Corollary \ref{TPS_quadrature}.  The target functions were chosen so that the Funk-Hecke formula (see, for example,\cite[\S 2.5]{atkinson2012spherical}) could be used to determine their exact integral over $\sph^2$.  Letting $x,x_c\in \sph^2$ and $g$ be a zonal kernel, i.e. $g(x,x_ci) = g(x\cdot x_c)$, such that $g \in L^{1}[-1,1]$, the Funk-Hecke formula gives the following result:
\begin{align}
\int_{\sph^2} g(x \cdot x_c)Y_{\ell,k}(x) d\mu(x) = \frac{4\pi a_{\ell}}{2\ell+1} Y_{\ell,k}(x_c),
\label{eq:funk_hecke}
\end{align}
where $Y_{\ell,k}$ is any degree $\ell$, order $k$ spherical harmonic, and $a_{\ell}$ is the $\ell$th coefficient in the Legendre expansion of $g(t)$.  

The following two kernels were used in constructing the target functions:
\begin{align}
g_1(t) &= -(2 - 2t)^{1/4},\\
g_2(t) &= \frac{1-\varepsilon^2}{(1 + \varepsilon^2 - 2\varepsilon t)^{\frac32}}\quad (0 < \varepsilon < 1),
\end{align}
which are known as the \emph{potential spline} kernel of order $1/4$ and the \emph{Poisson} kernel, respectively.  The Legendre expansion coefficients for these kernels are given as follows (see~\cite{Baxter-Hubbert-2001}):
\begin{align}
g_1:\quad a_{\ell} &= \frac{(-1)^{\ell+1}\sqrt{2}(\Gamma\left(\frac54\right))^2(2\ell+1)}{\Gamma\left(\frac54 - \ell\right)\Gamma\left(\frac94+\ell\right)},\\
g_2:\quad a_{\ell} &= (2\ell+1)\varepsilon^{\ell}.
\end{align}
The smoothness of these kernels is of course determined by the decay rate of the Legendre coefficients $a_{\ell}$.  For $g_1$, we have $a_{\ell} \sim \ell^{-3/2}$, which means $g_1$ belongs to every Sobolev space $W_2^{\mu}(\sph^2)$ with $\mu < \frac52$.   While for $g_2$ the Legendre coefficients decay exponentially fast, which means $g_2\in C^{\infty}(\sph^2)$, analytic in fact.

\begin{figure}[t]
\centering
\begin{tabular}{cc}
\includegraphics[width=0.48\textwidth]{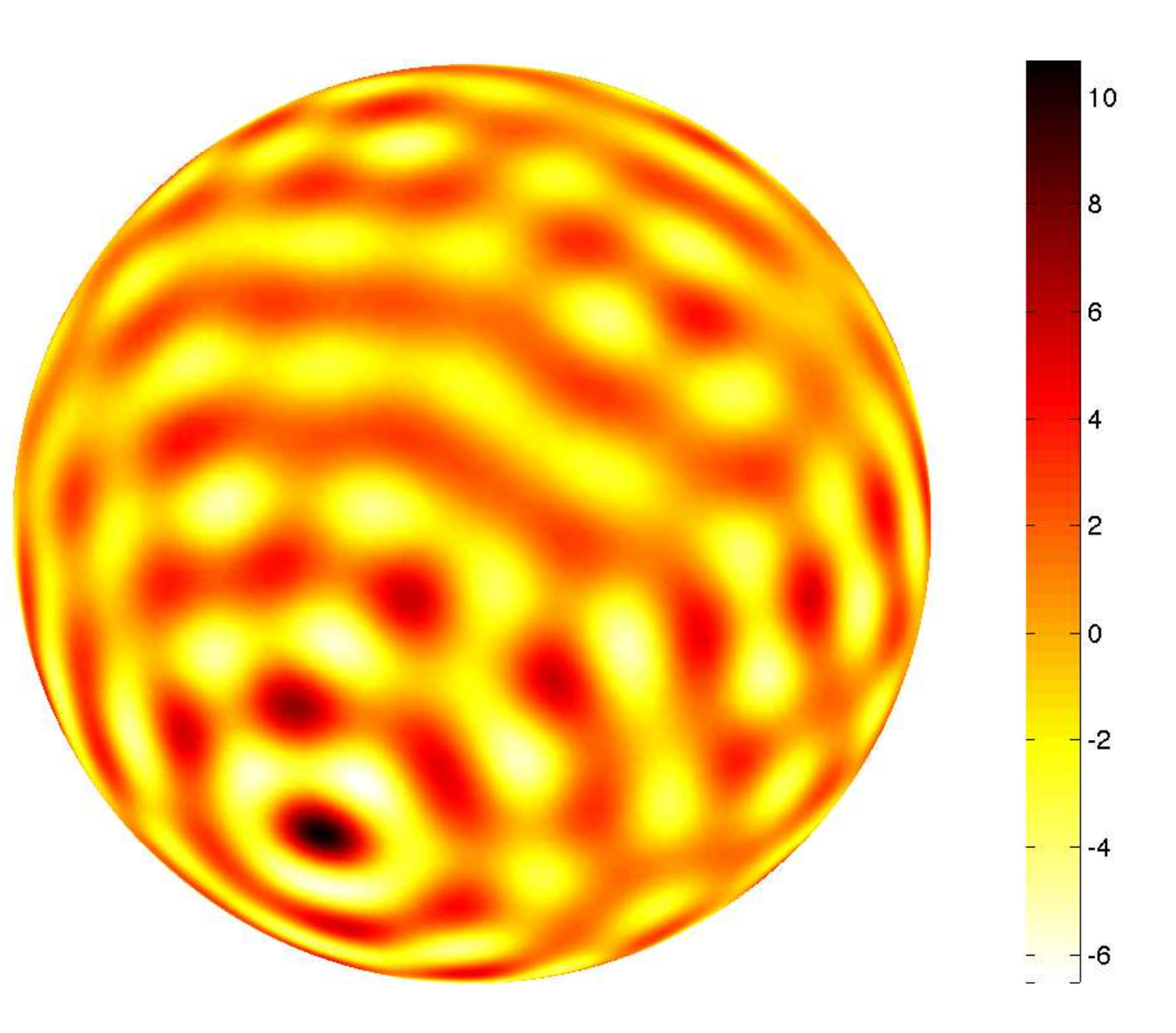} &
\includegraphics[width=0.48\textwidth]{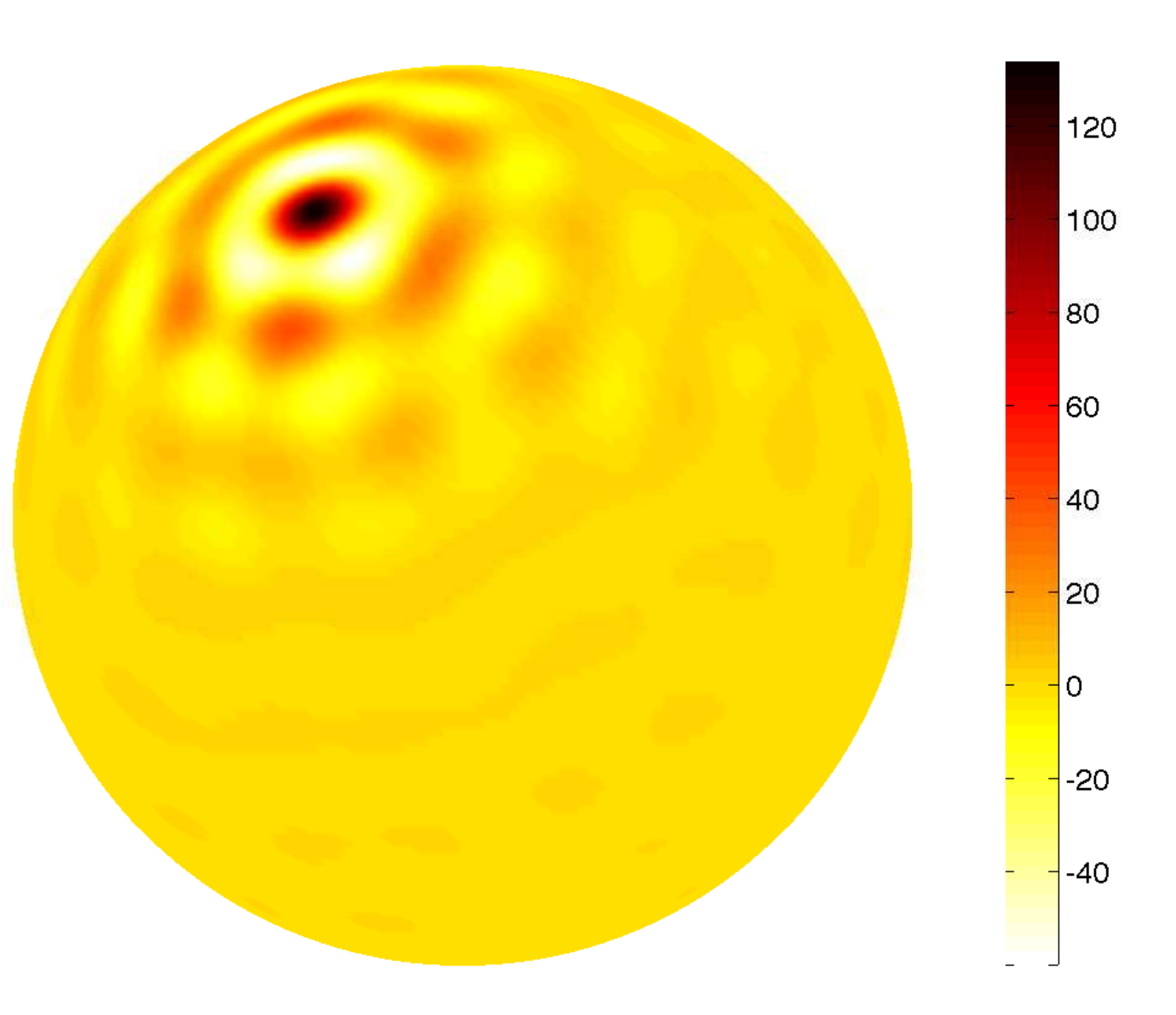} \\
(a) $f_1$ & (b) $f_2$ \\
\end{tabular}
\caption{Target integrands used to test the error estimates. (a) $f_1(x)$ from \eqref{eq:rough_target} and (b) $f_2(x)$ from \eqref{eq:smooth_target}.\label{fig:target_integrands}}
\end{figure}

Using the above results, we define the following two target integrands:
\begin{align}
f_1(x) &= \sum_{k=1}^{41} \text{sign}(Y_{20,k}(x_c))Y_{20,k}(x)g_1(x\cdot x_c), \label{eq:rough_target}\\
f_2(x) &= \sum_{k=1}^{41} \text{sign}(Y_{20,k}(x_c))Y_{20,k}(x)g_2(x\cdot x_c), \label{eq:smooth_target}
\end{align}
where $x_c=(\cos(-2.0281)\sin(0.76102),\sin(-2.0281)\cos(0.76102),\sin(0.76102))$ and $\varepsilon = 2/3$ for $g_2$; see Figure \ref{fig:target_integrands} (a) and (b) for plots of these respective functions.  Integrating these functions over $\sph^2$ and applying the Funk-Hecke formula \eqref{eq:funk_hecke} gives
\begin{align}
\int_{\sph^2} f_1(x) d\mu(x) &= 0.014830900415995, \\
\int_{\sph^2} f_2(x) d\mu(x) &= 0.032409262543520.
\end{align}
Both $f_1$ and $f_2$ inherit their smoothness directly from $g_1$ and $g_2$, respectively.  Thus, $f_1$ belongs to every Sobolev space $W_2^{\mu}(\sph^2)$ with $\mu < \frac52$ and $f_2\in C^{\infty}(\sph^2)$.

Tables \ref{tbl:rough_err} and \ref{tbl:smooth_err} display the relative errors in the $N$-point quadrature of $f_1$ and $f_2$, respectively, for the different family of nodes, while Figures \ref{fig:target_err}(a) and (b) display these respective results graphically on a $\log-\log$ scale.  Focusing first on the results for $f_1$ in Figure \ref{fig:target_err}(a) and comparing the results to the included dashed line, it is clear that for all three families the error is decreasing approximately like $\calo(N^{-1.25})$.  Since $h \sim N^{-1/2}$ for these families of nodes, this observed rate of decrease in the error is approximately $\calo(h^{2.5})$, which is precisely the rate predicted by the estimates in Theorem \ref{accuracy_stabiliy_polyharmonic} for functions with $f_1$'s smoothness.   Doing a similar comparison of the results for $f_2$ in Figure \ref{fig:target_err}(b), we see that the errors associated with the icosahedral nodes very clearly decrease like $\calo(N^{-2})$.  The results for the other two nodes are not as clear, but for larger values of $N$ the errors do appear to be decreasing approximately like $\calo(N^{-2})$.  Again, because of the relationship between $h$ and $N$ for these nodes, the errors for $f_2$ are thus decreasing approximately like $\calo(h^4)$.  This is the expected rate from Corollary \ref{TPS_quadrature} since $f_2$ is infinitely smooth and we have used the $m=2$ surface spline.

\begin{table}[h]
\centering
\begin{tabular}{||r|c||r|c||r|c||}
\hline
\hline
\multicolumn{2}{||c||}{Icosahedral} & \multicolumn{2}{c||}{Fibonacci} & \multicolumn{2}{c||}{Min. Energy} \\
N & Rel. Error & N & Rel. Error & N & Rel. Error \\
\hline
\hline
$2562$ & $1.926\times 10^{-1}$ & $2501$ & $5.112\times 10^{-3}$ & $2500$ & $3.048\times 10^{-2}$ \\
$10242$ & $3.533\times 10^{-2}$ & $10001$ & $5.549\times 10^{-3}$ & $10000$ & $6.848\times 10^{-2}$ \\
$23042$ & $1.286\times 10^{-2}$ & $22501$ & $1.770\times 10^{-3}$ & $22500$ & $2.480\times 10^{-2}$ \\
$40962$ & $6.268\times 10^{-3}$ & $40001$ & $1.040\times 10^{-3}$ & $40000$ & $1.217\times 10^{-2}$ \\
$92162$ & $2.273\times 10^{-3}$ & $62501$ & $6.460\times 10^{-4}$ & $62500$ & $6.989\times 10^{-3}$ \\
$163842$ & $1.107\times 10^{-3}$ & $90001$ & $4.068\times 10^{-4}$ & $90000$ & $4.393\times 10^{-3}$ \\
$256002$ & $6.330\times 10^{-4}$ & $160001$ & $1.844\times 10^{-4}$ & $160000$ & $2.083\times 10^{-3}$ \\
$655362$ & $1.957\times 10^{-4}$ & $250001$ & $1.085\times 10^{-4}$ & $250000$ & $1.228\times 10^{-3}$ \\  
\hline
\hline
\end{tabular}
\caption{Relative error in the $N$-point quadrature of the ``rough'' target function $f_1$ in \eqref{eq:rough_target} for the different families of nodes.\label{tbl:rough_err}}
\end{table}

\begin{table}[h]
\centering
\begin{tabular}{||r|c||r|c||r|c||}
\hline
\hline
\multicolumn{2}{||c||}{Icosahedral} & \multicolumn{2}{c||}{Fibonacci} & \multicolumn{2}{c||}{Min. Energy} \\
N & Rel. Error & N & Rel. Error & N & Rel. Error \\
\hline
\hline
$2562$ & $3.358\times 10^{-2}$ & $2501$ & $1.045\times 10^{-4}$ & $2500$ & $6.951\times 10^{-2}$ \\
$10242$ & $1.888\times 10^{-3}$ & $10001$ & $4.690\times 10^{-5}$ & $10000$ & $5.932\times 10^{-4}$ \\
$23042$ & $3.642\times 10^{-4}$ & $22501$ & $3.189\times 10^{-6}$ & $22500$ & $1.077\times 10^{-4}$ \\
$40962$ & $1.143\times 10^{-4}$ & $40001$ & $7.437\times 10^{-6}$ & $40000$ & $2.730\times 10^{-5}$ \\
$92162$ & $2.245\times 10^{-5}$ & $62501$ & $1.805\times 10^{-6}$ & $62500$ & $8.276\times 10^{-6}$ \\
$163842$ & $7.098\times 10^{-6}$ & $90001$ & $9.204\times 10^{-7}$ & $90000$ & $6.944\times 10^{-6}$ \\
$256002$ & $2.897\times 10^{-6}$ & $160001$ & $3.009\times 10^{-7}$ & $160000$ & $1.250\times 10^{-6}$ \\
$655362$ & $4.433\times 10^{-7}$ & $250001$ & $1.411\times 10^{-7}$ & $250000$ & $5.414\times 10^{-7}$  \\
\hline
\hline
\end{tabular}
\caption{Relative error in the $N$-point quadrature of the ``smooth'' target function $f_2$ in \eqref{eq:smooth_target} for the different families of nodes.\label{tbl:smooth_err}}
\end{table}

\begin{figure}[ht]
\centering
\begin{tabular}{cc}
\includegraphics[width=0.48\textwidth]{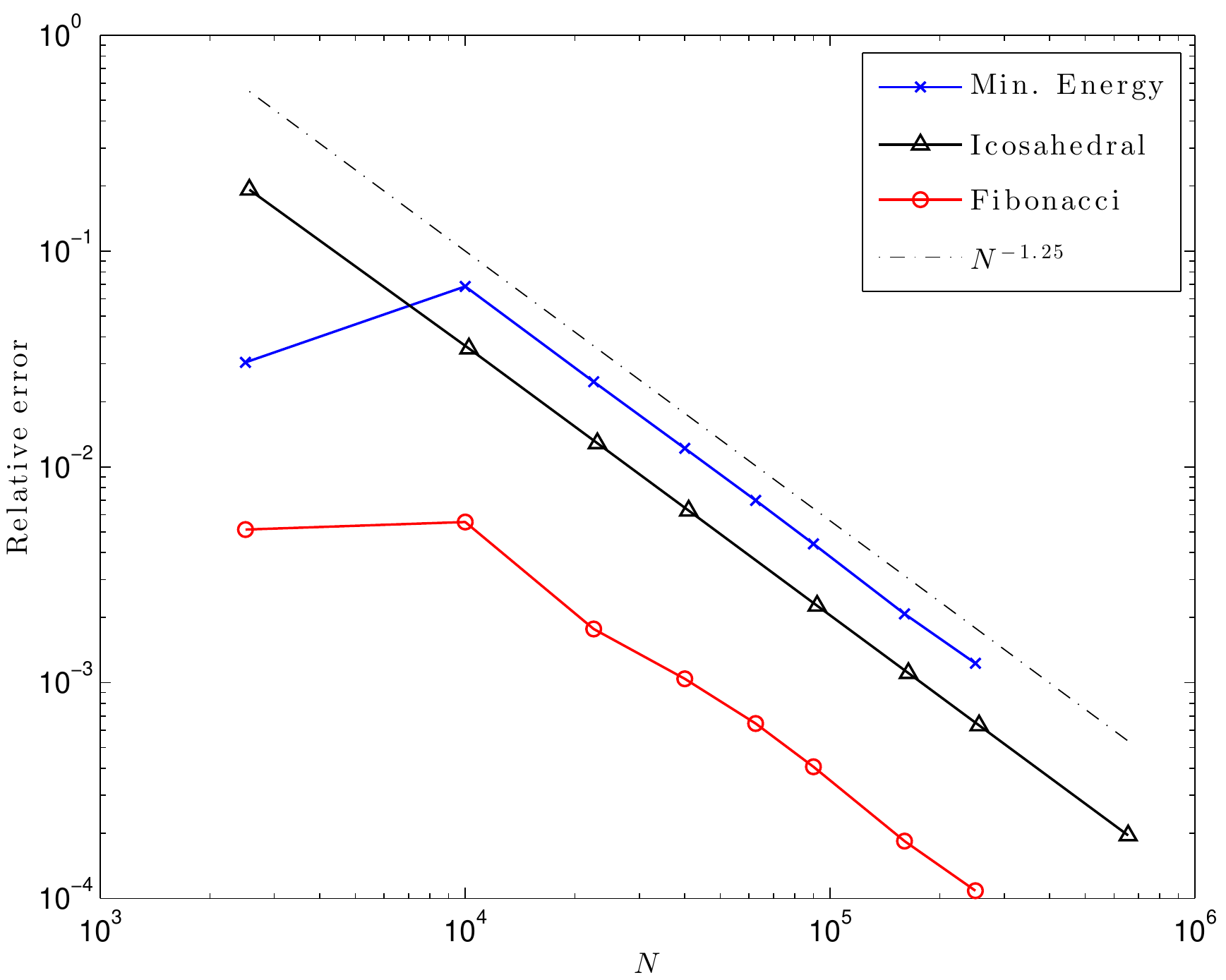} &
\includegraphics[width=0.48\textwidth]{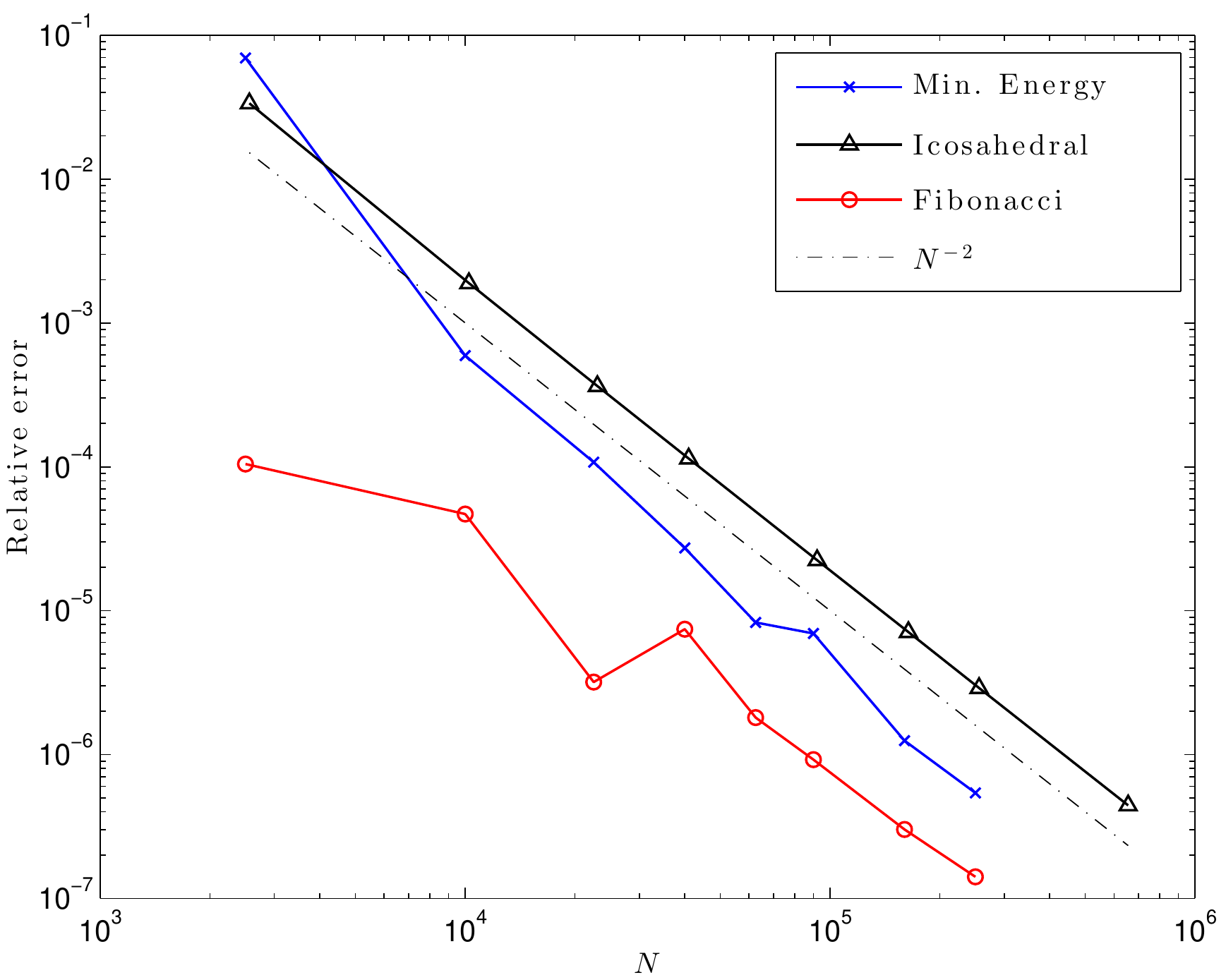} \\
(a) Rough target, $f_1$. & (b) Smooth target, $f_2$. \\
\end{tabular}
\caption{Relative errors in the $N$-point quadratures of (a) $f_1$ and (b) $f_2$ for the three different families of nodes.  The mesh-norm for all three families of nodes satisfies $h\sim N^{-1/2}$, so that the dashed lines in the figures indicate convergence like (a) $\calo(h^{2.5})$ and (b) $\calo(h^4)$.  These are the predicted convergence rates from Theorem \ref{accuracy_stabiliy_polyharmonic} and Corollary \ref{TPS_quadrature}, respectively. \label{fig:target_err}}
\end{figure}

We conclude by noting that the nodes and quadrature weights used in the numerical experiments above are available for download from~\cite{WrightQuadWeights}.

\section{Quadrature on Manifolds Diffeomorphic to Homogeneous Spaces}
\label{manif_diffeo}
Numerical integration of functions defined on smooth surfaces that are diffeomorphic to $\sph^2$ arise in a number of applications.  For example, the shape of the earth, as well as the other planets, is a ``flattened sphere'' -- i.e., an  oblate spheroid, which is diffeomorphic to a sphere. To numerically compute integrals over the earth's surface then requires quadrature formulas over oblate spheroids. In addition, the need for numerical integration over various surfaces also arises in boundary element formulations of continuum problems in $\RR^3$~\cite[Ch. 6]{atkinson2012spherical}.  In this section, we discuss quadrature in a more general context. We will show how to use quadrature weights for a  homogeneous manifold $\M$ to obtain an invariant, coordinate independent quadrature formula for a smooth Riemannian manifold $\LL$ diffeomorphic to $\M$.  Of course,  the case in which $\LL$ is an oblate spheroid and $\M=\sph^2$ is of special interest.

Recall that a $C^\infty$ manifold $\LL$ is diffeomorphic to $\M$ if there is a $C^\infty$ bijection $F:\M \to \LL$. Let $g_{ij}$ be the Riemannian metric for $\M$. Suppose that $\LL$ has the metric $h_{ij}$. Consider a local chart $(U,\phi:U\to \RR^n)$ near $p_0\in \M$. Using this chart we have coordinates  $x=(x^1,\ldots,x^n) = \phi(p)$ and a parametrization $p=\phi^{-1}(x)$. 

We can use $(U,\phi)$ to produce a local chart $(V,\psi:V \to \RR^n)$ near $q_0=F(p_0) \in \LL$.  Simply let $V=F(U) \subset \LL$ and $\psi = \phi\circ F^{-1}$. The coordinates for $V$ are thus $x = \psi(q)$, and so the two metrics $g_{ij}$ and $h_{ij}$ can be expressed in terms of the same set of coordinates. In these coordinates, the volume elements are given by 
\[
d\mu_{\M}(x) = \sqrt{\det(g_{ij}(x))}\,dx^1\cdots dx^n \ \text{and} \  d\mu_\LL(x) = \sqrt{\det(h_{ij}(x)}\,dx^1\cdots dx^n.
\]
It follows that
\begin{equation}
\label{volume_relation}
d\mu_\LL(x) = \underbrace{\sqrt{\frac{\det(h_{ij}(x))}{\det(g_{ij}(x)) }}}_{\displaystyle{w(x)}} d\mu_{\M}(x).
\end{equation}
Suppose that we now make a change of coordinates, from $x=\phi(p)$ to new coordinates $y=\varphi(p)$, or $x=x(y)$. Let $x'(y)$ be the Jacobian matrix for the transformation, and let $J(y)=\det(x'(y))$. In $y$ coordinates, the metrics are $\tilde g(y)=(x'(y))^Tg(x(y))x'(y)$ and $\tilde h(y)=(x'(y))^T h(x(y))x'(y)$. Consequently, we have that 
\[
\det(\tilde g_{ij}(y)) = J(y)^2 \det(g_{ij}(x(y))) \ \text{and} \ \det(\tilde h_{ij}(y)) = J(y)^2 \det(h_{ij}(x(y))),
\]
and, furthermore, that
\[
w(x(y))=\sqrt{\frac{\det(h_{ij}(x(y)))}{\det(g_{ij}(x(y))) }} = \sqrt{\frac{\det(h'_{ij}(y))}{\det(g'_{ij}(y)) }} = \tilde w(y).
\]
This means that $w\circ \phi(p) = \tilde w\circ \psi(p)=:W(p)$ is thus a scalar invariant that is independent of the choice of coordinates. In terms of integrals, we have
\begin{equation}
\label{intLL_intM}
\int_{\LL} f(q) d\mu_\LL(q) = \int_{\M} f\circ F(p) W(p) d\mu_{\M}(p).
\end{equation}

An invariant, coordinate independent  quadrature formula for $\LL$ can be obtained from the one for $\M$. We have the following result.

\begin{proposition}\label{quad_diffeo}
Let $X$ denote the set of centers on $\LL$ and let $X'=F^{-1}(X)$ be the corresponding set on $\M$. Suppose that we have a quadrature formula for $\M$ with weights $\{C_{\xi'}\}_{\xi'\in X'}$. Then we have the following quadrature formula for $\LL$,
\begin{equation}\label{quad_diffeo_formula}
Q_\LL(f) := \sum_{\xi \in X} f(\xi)c_\xi,\ \text{where }c_\xi := W(F^{-1}(\xi))C_{F^{-1}(\xi)}.
\end{equation}
\end{proposition}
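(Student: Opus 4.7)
The plan is to derive \eqref{quad_diffeo_formula} as the natural pullback of the $\M$-quadrature under the diffeomorphism $F$, and to observe that $Q_\LL$ inherits its accuracy from the accuracy of the $\M$-quadrature applied to the transformed integrand $(f\circ F)\cdot W$. There is no real obstacle; everything follows from the change-of-variables identity \eqref{intLL_intM} that was established just before the proposition, together with a relabeling of the sum.

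First I would start from \eqref{intLL_intM}, which rewrites
\[
\int_\LL f(q)\,d\mu_\LL(q) \;=\; \int_\M (f\circ F)(p)\, W(p)\, d\mu_\M(p).
\]
The integrand on the right is a continuous function on $\M$, so I can feed it into the given quadrature formula on $\M$ with weights $\{C_{\xi'}\}_{\xi' \in X'}$, obtaining the approximation
\[
\int_\M (f\circ F)(p)\,W(p)\,d\mu_\M(p) \;\approx\; \sum_{\xi'\in X'} C_{\xi'}\,(f\circ F)(\xi')\,W(\xi').
\]

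Next I would reindex the sum using $X' = F^{-1}(X)$: every $\xi' \in X'$ has the unique form $\xi' = F^{-1}(\xi)$ for some $\xi \in X$, so $(f\circ F)(\xi') = f(\xi)$ and $W(\xi') = W(F^{-1}(\xi))$. Substituting gives
\[
\sum_{\xi'\in X'} C_{\xi'}\,(f\circ F)(\xi')\,W(\xi') \;=\; \sum_{\xi\in X} C_{F^{-1}(\xi)}\,W(F^{-1}(\xi))\,f(\xi) \;=\; \sum_{\xi \in X} c_\xi f(\xi) \;=\; Q_\LL(f),
\]
which is exactly \eqref{quad_diffeo_formula}. Moreover, whenever $(f\circ F)\cdot W$ lies in the space exactly integrated by the $\M$-quadrature, this chain of equalities becomes exact, so $Q_\LL(f) = \int_\LL f\,d\mu_\LL$.

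The only conceptual point worth emphasizing is coordinate independence: the scalar $W$ was shown in the computation preceding the proposition to satisfy $w\circ\phi(p) = \tilde w\circ\psi(p) = W(p)$ regardless of the chart chosen, so the evaluations $W(F^{-1}(\xi))$ depend only on the intrinsic Riemannian structures of $\M$ and $\LL$. Consequently $Q_\LL$ is genuinely invariant and coordinate free, as advertised. No Riemannian technicalities are required beyond \eqref{intLL_intM} itself.
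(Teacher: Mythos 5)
Your proposal is correct and follows essentially the same route as the paper: apply the $\M$-quadrature to the transformed integrand $(f\circ F)\,W$ via the change-of-variables identity \eqref{intLL_intM}, then reindex the sum over $X'=F^{-1}(X)$ to read off the weights $c_\xi = W(F^{-1}(\xi))\,C_{F^{-1}(\xi)}$. Your added remarks on exactness and coordinate independence are consistent with, and slightly more explicit than, the paper's brief argument.
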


\begin{proof}
Applying the quadrature formula for the homogeneous manifold $\M$ to the integral on the right-hand side of (\ref{intLL_intM}) yields
\[
Q_{\M}\big(f\circ F(p) W(p)\big) = \sum_{\xi'\in X'} f\circ F(\xi') W(\xi')C_{\xi'}.
\]
Since $\xi' = F^{-1}(\xi)$, we have $f\circ F(\xi') = f(\xi)$ and $W(\xi')=W(F^{-1}(\xi))$.  Taking $c_\xi =  W(F^{-1}(\xi))C_{F^{-1}(\xi)}$ we obtain (\ref{quad_diffeo_formula}).
\end{proof}

\paragraph{Oblate spheroid} Consider the oblate spheroid $\LL$, $x^2+y^2+z^2/a^2=1$, where $0<a <1$, and the 2-sphere $\sph^2(=\M)$, $X^2+Y^2+Z^2=1$. The diffeormorphism between the two manifolds is $F(X,Y,Z) := (X,Y,aZ)$; that is, $(x,y,z) = (X,Y,aZ)$. Our aim is to find the scale factor $W$. Since the end result will be coordinate independent, we  choose to work in spherical coordinates  $(\theta,\phi)$, where $\theta$ is the longitude and $\phi$ is the latitude on $\sph^2$. (The north pole is $(0,0,1)$.) Obviously, for $\LL$ we have $x = \sin \theta \cos \phi$, $y=\sin \theta \sin \phi$, $z = a \cos\theta$. The metric for the sphere is $dS^2 = d\theta^2 + \sin^2(\theta)d\phi^2$. The metric for $\LL$ is the Euclidean metric $dx^2+dy^2+dz^2 $ on $\RR^3$ restricted to $\LL$. Making a straightforward calculation, one can show that the metric for $\LL$ is
\[
ds^2 = (\cos^2 \theta +a^2\sin^2\theta )d\theta ^2 +\sin^2 \theta d\phi^2 = \big(a^2 + (1-a^2)\cos^2 \theta\big) d\theta^2 +\sin^2 \theta d\phi^2. 
\]
Consequently, the volume element for $\LL$ is 
\[
d\mu_\LL = \sqrt{a^2 + (1-a^2)\cos^2 \theta\,}\sin \theta \,d\theta d\phi = \underbrace{\sqrt{a^2 + (1-a^2)\cos^2 \theta\,}}_{w(\theta,\phi)}\,d\mu_{\sph^2}
\] 
To put this in invariant form on $\sph^2$, we use $Z=\cos\theta$ to obtain $W(X,Y,Z)=\sqrt{a^2 + (1-a^2)Z^2\,}$. Pulling this back to $\LL$, we have $W\circ F^{-1}(x,y,z)= \sqrt{a^2 + (a^{-2}-1)z^2\,}$. The weights for $Q_\LL$ are thus
\[
c_\xi = \sqrt{a^2 + (a^{-2}-1)\xi_z^2\,}\,C_{(\xi_x,\xi_y, \xi_z/a)}.
\]

As we mentioned above, the earth is approximately an oblate spheroid. The parameter $a$ is the ratio of the polar radius to the equatorial  radius. The \emph{flattening} of the earth is $f = 1 -a$, and has the approximate value $f\approx 1/300$ (cf.\ Earth Fact Sheet \cite{NASA-fact-sheet-2008}). From this, we get that $a\approx 299/300$, and so  $W\circ F^{-1}(x,y,z) \approx \sqrt{0.993+ 0.007z^2}$. This factor varies between $0.9967$ and $1$, about $0.3\%$. Even so, it could affect the accuracy of the quadrature formula for functions with large values near the equator. As an aside, Jupiter has $f\approx 1/15$ (cf.\ Jupiter Fact Sheet (ellipticity) \cite{NASA-fact-sheet-2008}), and for it the change in $W\circ F^{-1}$ would be a hefty $7\%$.

\section*{Acknowledgments}
We thank Prof. Doug Hardin from Vanderbilt University for providing us with code for generating the quasi-minimum energy points used in the numerical examples based on the technique described in~\cite{BorodachovHardinSaff:2012}.  

\bibliographystyle{siam} \bibliography{rbf}

\end{document}